\patchcmd{\ttlh@hang}{\parindent\z@}{\parindent\z@\leavevmode}{}{}
\patchcmd{\ttlh@hang}{\noindent}{}{}{}
\newtheorem{dref}{Definition}[section] \newtheorem{lemma}[dref]{Lemma}
\newtheorem{theo}[dref]{Theorem} \newtheorem{prop}[dref]{Proposition}
\newenvironment{proof}{\par\noindent{{\bf Proof.}}}{\hfill$\Box$
\medskip}
\title{Resonances over a potential well in an island.}
\author{\\Johannes Sj\"ostrand\\ 
\small IMB {\footnotesize - UMR5584 CNRS},\\
\small  Universit\'e de Bourgogne\\
  \small 9, avenue Alain Savary - BP 47870\\
  \small 21078 Dijon cedex, France\\ \footnotesize
  johannes.sjostrand@u-bourgogne.fr \\
  \and 
 \\ Maher Zerzeri
  \\ \small LAGA {\footnotesize - UMR7539 CNRS},\\
\small  Universit\'e Sorbonne Paris-Nord\\
  \small 99, avenue J.-B. Cl\'ement\\
  \small 93430 Villetaneuse, France\\ \footnotesize
  zerzeri@math.univ-paris13.fr
}
\date{}
\begin{document}

\maketitle

 \par\bigskip

\abstract{In this paper we study the distribution of
    scattering resonances for a multidimensional semi-classical Schr\"odinger operator,
    associated to a potential well in an island at energies close to the maximal one that limits the separation of the
    well and the surrounding sea. 
    
 \par\bigskip\noindent
    
\begin{center}
 \textbf{R\'esum\'e}
\end{center}

Dans cet article, nous \'etudions la distribution des r\'esonances pour 
l'op\'erateur de Schr\"odinger semi-classique multidimensionnel,
associ\'e \`a un puits de potentiel dans l'\^ile aux \'energies
proche de celle qui d\'elimite la s\'eparation du puits et de la mer environnante.
}

\vskip2.5cm\noindent
{\small {\bf 2020 Mathematics Subject Classification.--} 35J10 35B34 35P20 47A55.} 
\par\smallskip\noindent
{\small {\bf Key words and phrases.--} Resonances, semi-classical asymptotic, 
microlocal analysis, Schr\"odinger operator,
potential well, maximal energy.}

\tableofcontents

\vfill\newpage


\section{Introduction and main result}\label{int}
\setcounter{equation}{0}

In this work we consider resonances for a semi-classical Schr\"odinger
operator with potential $V\in C^\infty ({\mathbb{R}}^n;{\mathbb{R}})$ (cf.\
(\ref{int.6})), where we assume that $n\ge 2$ and that
\begin{equation}\label{int.1}
\begin{matrix}\left\{
\begin{aligned}
&V\hbox{ has a holomorphic extension to a  truncated}\\
& \hbox{  sector }\,\, {\Gamma}_{C}:=\Big\{x\in {\mathbb{C}}^n;\, |\Re x|>C,\ 
|\Im x|<\frac{1}{C}|\Re x|\Big\}.
\end{aligned}\right.
\end{matrix}
\end{equation}
Here $C$ is some positive constant. We let $V$ also denote the extension. Assume
\begin{equation}\label{int.2}
V\longrightarrow 0,\,\, \hbox{ when }\, x\longrightarrow \infty\,\, \hbox{ in }\, {\Gamma}_C. 
\end{equation}
Let $E_0>0$. Assume that $V^{-1}(]-\infty ,E_0[)={\mathscr{U}}_{E_0}\sqcup\mathscr{S}_{E_0}$,
where ${\mathscr{U}}_{E_0}$, $\mathscr{S}_{E_0}$ are open connected and mutually disjoint. We let
${\mathscr{U}}_{E_0}$ (the potential well) be the bounded component and $\mathscr{S}_{E_0}$ (the sea)
be the unbounded one. When
\begin{equation}\label{int'.4}
\overline{\mathscr{U}}_{\hskip-1pt E_0}\cap \overline{\mathscr S}_{\hskip-1pt E_0}=\emptyset,
\end{equation}
the situation is quite well understood  (see \cite{HeSj86} and also  
\cite{CoDuKlSe87}): Let
\begin{equation}\label{int.4.5}
p(x,\xi )=\xi^2+V(x)
\end{equation}
and assume that
\begin{equation*}
V\hbox{\ is analytic in a neighborhood of }\overline{\mathscr S}_{\hskip-1pt E_0},
\end{equation*}
\begin{equation*}
  H_p=\partial_\xi p \cdot \partial_x- \partial_x p\cdot \partial _\xi \hbox{ has no
    trapped trajectories in }
  {{p^{-1}(E_0)}_{\vert}}_{\overline{\mathscr S}_{\hskip-1pt E_0}}.
\end{equation*}
Here a trapped trajectory is by definition a maximally extended
integral curve of $H_p$ which is contained in some bounded set. 

\par\smallskip
By suitably modifying the potential near $\overline{\mathscr S}_{\hskip-1pt E_0}$, we get a
new potential $V^{\mathrm{int}}$ which is equal to $V$ in a neighborhood of
$\overline{\mathscr{U}}_{E_0}$ and $\ge E_0+ \frac{1}{{\mathcal{O}}(1)}$\footnote{Here we follow the
  convention that the expression ``${\mathcal{O}}(1)$''  in a denominator
  denotes a bounded positive quantity.} away from that neighborhood,
so that $P^{\mathrm{int}}=-h^2\Delta +V^{\mathrm{int}}$ is self-adjoint
with purely discrete spectrum in $]-\infty ,E_0+\frac{1}{{\mathcal{O}}(1)}[$. The
eigenvalues are distributed according to the semi-classical Weyl law
and it was established in \cite[{Proposition 9.6 and Theorem 9.7}]{HeSj86}, \cite[{ Theorem 4 in Section IV or Theorem 2 in Section V}]{CoDuKlSe87} that the resonances of
\begin{equation}\label{int.6}
P=-h^2\Delta +V
\end{equation}
in $\mathrm{neigh\,}({E_0},{\mathbb{C}})$\footnote{Let $M$ be a topological space. 
Let $N$ be a subset of $M$. The set $\textrm{neigh}(N,M)$  denotes some neighborhood of $N$ in $M$.} 
are related to the eigenvalues of
$P^{\mathrm{int}}$ in $\mathrm{neigh\,}({E_0},{\mathbb{R}})$ via a bijection
$$
b:\, \sigma (P^{\mathrm{int}})\cap \mathrm{neigh\,}({E_0},{\mathbb{R}})\longrightarrow
\mathrm{Res\,}(P)\cap \mathrm{neigh\,}({E_0},{\mathbb{C}}),
$$
such that $b(\mu )-\mu ={\mathcal{O}}(1)e^{-\frac{1}{{\mathcal{O}}(h)}}$. Here $\sigma
(P^{\mathrm{int}})$ denotes the spectrum of $P^{\mathrm{int}}$ and
$\mathrm{Res\,}(P)$ the set of resonances, where both the eigenvalues
and the resonances are counted with their natural multiplicity. 
{See \cite{Ma02}, \cite{FuLaMa11}, \cite{HiMaSj17}, \cite{NaStZw03} for related results for potentials that may be non-analytic on any bounded set.}

When increasing the energy level, 
we may have
$$
V^{-1}(]-\infty ,E[)={\mathscr{U}}_E\sqcup {\mathscr{S}}_E
$$
for $E-E_0>0$ small, where ${\mathscr{U}}_E\supset {\mathscr{U}}_{E_0}$, 
${\mathscr{S}}_E\supset \mathscr{S}_{E_0}$ remain
connected and disjoint until we reach a new energy $E_0=E_0^{\mathrm{new}}$, for which
(\ref{int'.4}) no longer holds. We typically may have
\begin{equation}\label{int'.7}
\overline{\mathscr U}_{E_0}\cap \overline{\mathscr S}_{\hskip-1pt E_0}=\{x_0 \}, 
\end{equation}
for some point $x_0\in {\mathbb{R}}^n$ while the other assumptions remain
valid. In this work we study the distribution of resonances near the
new energy level $E_0>0$.

For simplicity we now replace $V$ by $V-E_0$ (so that the energy level
$E_0$ transforms to the level $0$) and formulate our
assumptions for the new potential $V\in C^\infty ({\mathbb{R}}^n;{\mathbb{R}})$,
($n>1$) assumed to satisfy \eqref{int.1}. Instead of \eqref{int.2} we assume 
\begin{equation}\label{int.2new}
V\longrightarrow -E_0,\,\, \hbox{ when }\, x\longrightarrow \infty\,\, \hbox{ in }\, {\Gamma}_{C}. 
\end{equation}
Assume that
\begin{equation}\label{int.3}
V^{-1}(]-\infty ,0[)={\mathscr{U}}_0\sqcup {\mathscr{S}}_0,
\end{equation}
where ${\mathscr{U}}_0$, ${\mathscr{S}}_0$ are open connected (and mutually disjoint). We let
${\mathscr{U}}_0$ (the potential well) be the bounded component and ${\mathscr{S}}_0$ (the sea)
be the unbounded one. Define $p(x,\xi )$ as in (\ref{int.4.5}).
Assume that
\begin{equation}\label{int.5}
V\hbox{\ is analytic in a neighborhood of }\overline{\mathscr{S}}_0.
\end{equation}
Assume that
\begin{equation}\label{int.7}
\overline{\mathscr{U}}_0\cap \overline{\mathscr{S}}_0=\{x_0 \}, 
\end{equation}
for some point $x_0\in {\mathbb{R}}^n$. After a translation, we may assume that $x_0=0$.\\

\begin{figure}
\hskip2cm
\begin{picture}(0,0)%
\includegraphics[scale=0.3]{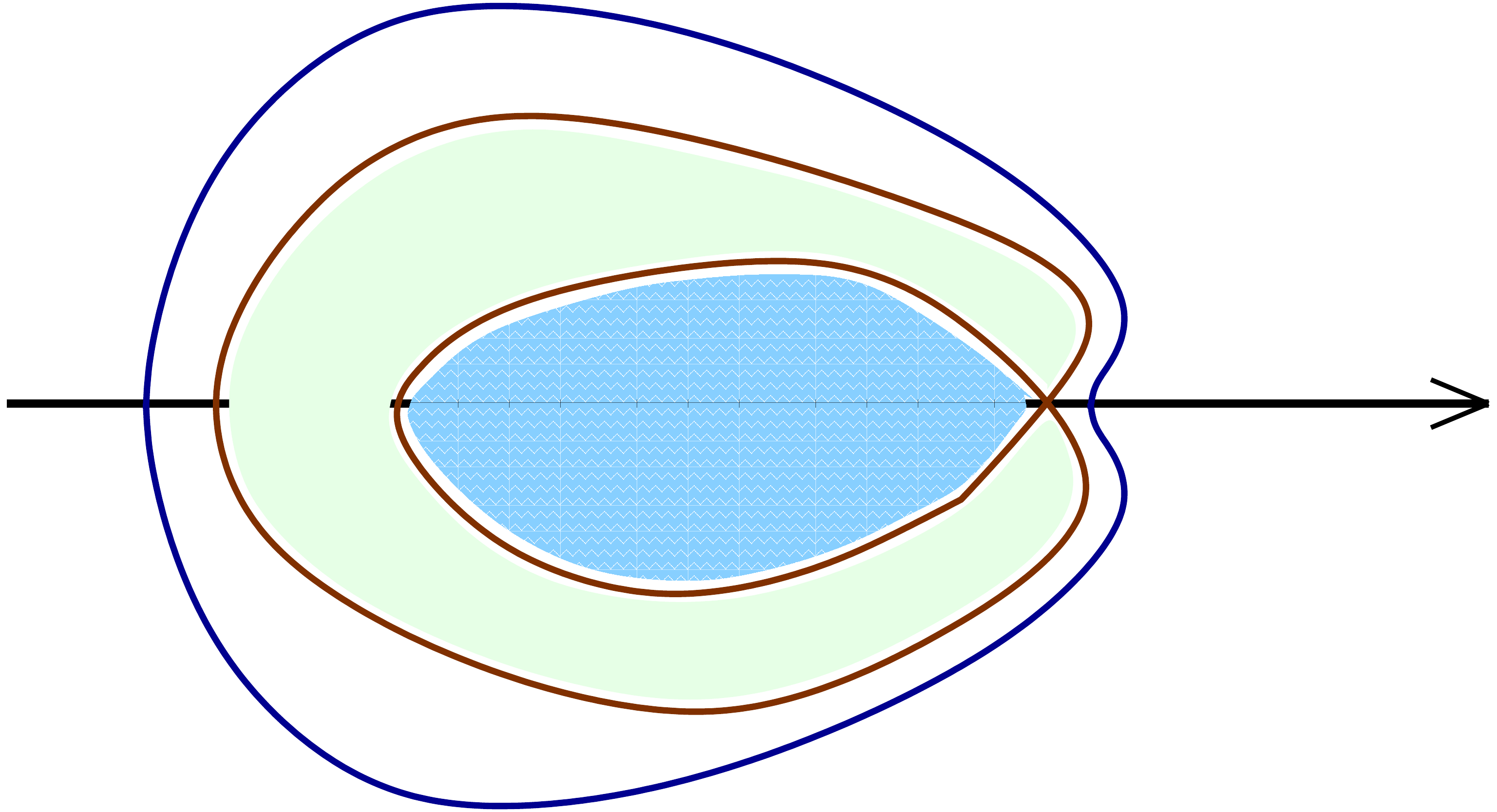}%
\end{picture}%
\setlength{\unitlength}{789sp}%
\begingroup\makeatletter\ifx\SetFigFont\undefined%
\gdef\SetFigFont#1#2#3#4#5{%
  \reset@font\fontsize{#1}{#2pt}%
  \fontfamily{#3}\fontseries{#4}\fontshape{#5}%
  \selectfont}%
\fi\endgroup%
\begin{picture}(12302,8578)(-58,-8209)

\put(10900,-2100){\scalebox{3.5}[1]{\color{Blue}\circle{2000}}}

\put(600,6000){\makebox(0,0)[lb]{\smash{{\SetFigFont{15}{11}{\rmdefault}{\mddefault}{\updefault}$\mathbb{R}^n$}}}}
\put(-1000,-300){\makebox(0,0)[lb]{\smash{{\SetFigFont{19}{11}{\rmdefault}{\mddefault}{\updefault}{\color{Blue}${\mathscr{S}}_0$}}}}}
\put(14350,-1700){\makebox(0,0)[lb]{\smash{{\SetFigFont{15}{9}{\rmdefault}{\mddefault}{\updefault}{\color{Brown}$0$}}}}}

\put(22000,-3000){\makebox(0,0)[lb]{\smash{{\SetFigFont{15}{7.2}{\rmdefault}{\mddefault}{\updefault}{$x_{n}$}}}}}

\put(7900,1200){\makebox(0,0)[lb]{\smash{{\SetFigFont{50}{7.2}{\rmdefault}{\mddefault}{\updefault}{\color{PineGreen}{$\swarrow$}}}}}}
\put(11750,4000){\makebox(0,0)[lb]{\smash{{\SetFigFont{12}{7.2}{\rmdefault}{\mddefault}{\updefault}{\color{PineGreen}{$\{V(x)>0\}$}}}}}}

\put(11500,-1100){\makebox(0,0)[lb]{\smash{{\SetFigFont{55}{7.2}{\rmdefault}{\mddefault}{\updefault}{\color{blue}$\swarrow$}}}}}
\put(15700,1950){\makebox(0,0)[lb]{\smash{{\SetFigFont{19}{7.2}{\rmdefault}{\mddefault}{\updefault}{\color{blue}${\mathscr{U}}_{0}$}}}}}

\put(13750,-4800){\makebox(0,0)[lb]{\smash{{\SetFigFont{40}{7.2}{\rmdefault}{\mddefault}{\updefault}{\color{blue}$\nwarrow$}}}}}
\put(15150,-5850){\makebox(0,0)[lb]{\smash{{\SetFigFont{22}{8}{\rmdefault}{\mddefault}{\updefault}{\color{blue}$\leftarrow$}}}}}
\put(16850,-5550){\makebox(0,0)[lb]{\smash{{\SetFigFont{12}{7.2}{\rmdefault}{\mddefault}{\updefault}{\color{blue}$\{V(x)=E_1<0\}$}}}}}

\put(10500,-8500){\makebox(0,0)[lb]{\smash{{\SetFigFont{31}{7.2}{\rmdefault}{\mddefault}{\updefault}{\color{Brown}$\nwarrow$}}}}}
\put(12900,-9100){\makebox(0,0)[lb]{\smash{{\SetFigFont{12}{7.2}{\rmdefault}{\mddefault}{\updefault}{\color{Brown}$\{V(x)=0\}$}}}}}

\put(-2000,-11000){\makebox(0,0)[lb]{\smash{{\SetFigFont{12}{7.2}{\rmdefault}{\mddefault}{\updefault}{\color{blue}$\{V(x)<0\}=\mathscr{U}_0\cup\mathscr{S}_0$}.}}}}

\end{picture}%
\vskip1cm
\caption{View from above. {\textsl{The topographic situation.}}} \label{Island}
\end{figure}

\noindent
We make the generic assumption that
\begin{equation}\label{int.8}
\begin{matrix}\left\{
\begin{aligned}
& x_0=0  \hbox{\ is a non-degenerate critical }\\
& \hbox{point for } V \hbox{ of signature }(n-1,1).
\end{aligned}\right.
\end{matrix}
\end{equation}

\noindent
The point $(0,0)\in {\mathbb{R}}^{2n}$ is a stationary point and hence a trapped
trajectory for the Hamilton flow of $p$. Assume that
\begin{equation}\label{int.9}
\begin{matrix}\left\{
\begin{aligned}
& \{(0,0) \}  \hbox{ is the only trapped trajectory}\\
 & \hbox{ for the } H_p\hbox{-flow in} \ {{p^{-1}(0)}_\vert}_{\overline{\mathscr{S}}_0}.
\end{aligned}\right.
\end{matrix}
\end{equation}

We have $V(0)=0$. Assume for simplicity that
\begin{equation}\label{int.10}
dV(x)\ne 0,\hbox{ when } x\in \partial {\mathscr{U}}_0\setminus\{0\}.
\end{equation}
As in \cite{HeSj86} one can define a reference operator $P^{\mathrm{int}}$
by increasing $V$ in ${\mathscr{S}}_0$ to get a potential
$V^{\mathrm{int}}$ which is $\ge 0$ away from ${\mathscr{U}}_0$. 
(Take for instance $V^{\mathrm{int}}=1_{{\mathbb{R}}^n {\setminus} 
{\mathscr{S}}_0}V+1_{{\mathscr{S}}_0}$.)
Then we have the standard Weyl asymptotics for the number of
eigenvalues of $P^{\mathrm{int}}$ in $]-\infty ,E]$ when
\begin{equation}\label{int.11}
-\frac{1}{C}\le E\le -\delta 
\end{equation}
for every fixed $0<\delta \ll 1$ and $C\gg 1$, stating that
\begin{equation}\label{int.12}
\hskip-10pt \#\big(\sigma (P^{\mathrm{int}})\cap ]-\infty ,E] \big)=
  \frac{1}{(2\pi h)^{n}}\left( \mathrm{vol}\big({{p^{-1}(]-\infty
        ,E])}_\vert}_{{\mathscr{U}}_0}\big)+o(1)\right)
\end{equation}
as $ h\longrightarrow 0$ and uniformly for $E$ as in (\ref{int.11}).
\footnote{\label{can.projection} Here for $A\subset T^*\mathbb{R}^n$, $B\subset\mathbb{R}^n$ we write 
$A_{|B}=A\cap \pi_x^{-1}(B)$ where $\pi_x : T^*\mathbb{R}^n\longrightarrow \mathbb{R}^n$ 
is the canonical projection, given by $\pi_x(x,\xi)=x$.}

\par\medskip\noindent
For $E\le 0$, put
\begin{equation}\label{int.13}
\omega (E)=\mathrm{vol\,}\left({{p^{-1}(]-\infty
        ,E])}_\vert}_{{\mathscr{U}}_0}\right),
\end{equation}
so that
\begin{equation}\label{int.14}
\omega (E)=C_n\int_{{\mathscr{U}}_0}\Big(E-V(x)\Big)_+^{\frac{n}{2}} \,dx,
\end{equation}
where $\displaystyle C_n=\mathrm{vol}\big(B_{{\mathbb{R}}^n}(0,1)\big)=
\frac{\pi^{\frac{n}{2}}}{\Gamma(\frac{n}{2}+1)}.$
 Since $n\ge 2$, we see that $\omega \in C^1\big([-\frac{1}{C},0]\big)$ and that
\begin{equation}\label{int.15}
\omega '(E)=\frac{\pi^{\frac{n}{2}}}{\Gamma(\frac{n}{2})}
\int_{{\mathscr{U}}_0}\Big(E-V(x)\Big)_+^{\frac{n}{2}-1} \,dx.
\end{equation}

\par\medskip
Let $\omega $ also denote a $C^1$-extension to the interval
$[-\frac{1}{C},\frac{1}{C}]$ so that (\ref{int.13}) holds for $E\le 0$ and so that
$\omega (E)$ is well-defined up to a term $o(E)$ for $0\le E\le \frac{1}{C}$.

\begin{theo}\label{int1}  
Let $V\in C^\infty(\mathbb{R}^n;\mathbb{R})$ and define $P, p$ as in 
\eqref{int.6}, \eqref{int.4.5}. Let $E_0>0$ and assume 
\eqref{int.1}, \eqref{int.2new}, \eqref{int.3},
\eqref{int.5}, \eqref{int.7}, \eqref{int.8}, \eqref{int.9} and \eqref{int.10}.
Let $\omega (E)$ be a $C^1$ function on
  $\big[-\frac{1}{C},\frac{1}{C}\,\big]$ satisfying (\ref{int.13}) for $E\le 0$.

\par
Let $C_0>0$. Then for every $0<\delta \le \frac12$, there exists $0<\varepsilon
(\delta )\ll 1$ such that for every $0< \varepsilon \le \varepsilon
(\delta )$ and $0<h\le h(\delta,\varepsilon)$ small enough:
\begin{itemize}
\item[(A)] The number of resonances (of $P$) in $]-C_0\varepsilon ,\varepsilon
  [+i]-\varepsilon ,-\delta \varepsilon [$ is ${\mathcal{O}}_\delta (h^{-n}\varepsilon ^n)$.
  
\item[(B)] For all $a,b\in ]-C_0\varepsilon ,\varepsilon [ $ with $a<b$, the
    number of resonances in $]a,b[+i]-\delta \varepsilon ,0]$ is equal to
    $(2\pi h)^{-n}\big(\omega (b)-\omega (a)+{\mathcal{O}}(\delta |\ln \delta|\varepsilon) \big)$,
    uniformly with respect to $a,b,h $.
\end{itemize}

\end{theo}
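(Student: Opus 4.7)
The plan is to reduce the resonance problem to a spectral problem by exterior analytic dilation, replacing $P$ by a non-self-adjoint operator $P_\theta$ whose discrete eigenvalues in a fixed complex neighborhood of $E=0$ coincide, with multiplicity, with the resonances of $P$. Following the strategy of \cite{HeSj86, CoDuKlSe87}, I would use the reference operator $P^{\mathrm{int}}=-h^2\Delta+V^{\mathrm{int}}$ introduced above, which is self-adjoint with purely discrete spectrum below $\frac{1}{\mathcal{O}(1)}$. The Weyl asymptotics \eqref{int.12}, together with the $C^1$-regularity of the extended $\omega$, yield
$$
\#\bigl(\sigma(P^{\mathrm{int}})\cap ]a,b[\bigr) = \frac{1}{(2\pi h)^n}\bigl(\omega(b)-\omega(a)+o(\varepsilon)\bigr)
$$
uniformly for $a,b\in ]-C_0\varepsilon,\varepsilon[$; this is the ``interior'' count against which part (B) must be matched.

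The heart of the proof is a Grushin problem for $P_\theta-z$ with a finite-rank feedback $R_\pm$ built from the eigenfunctions of $P^{\mathrm{int}}$ associated to eigenvalues in a slightly enlarged window of size $O(\varepsilon)$ around $0$. Provided the ``outer'' operator obtained by removing a neighborhood of $\overline{\mathscr U}_0$ satisfies resolvent bounds of polynomial size in $h^{-1}$, the Schur complement produces an effective Hamiltonian $E_{+-}(z)$, an $N\times N$ holomorphic matrix with $N=O(h^{-n}\varepsilon^n)$, and the resonances of $P$ in the box correspond, with multiplicity, to the zeros of $\det E_{+-}(z)$. One then writes $E_{+-}(z)=z\,\mathrm{Id}-M(z)$, where $M(z)$ is approximately diagonal with the interior eigenvalues $\mu_j$ on the diagonal, plus a ``transfer'' term encoding the leak of interior quasimodes through the saddle $x_0=0$.

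The zeros of $\det E_{+-}$ are counted by complex analysis. For part (A), the bound $\|E_{+-}(z)\|=O(\varepsilon)$ combined with Jensen's formula on a slightly enlarged box gives the bound $O_\delta(h^{-n}\varepsilon^n)$ on the number of zeros with imaginary part in $]-\varepsilon,-\delta\varepsilon[$. For part (B), I would apply the argument principle to $\det E_{+-}$ along a contour around $]a,b[+i]-\delta\varepsilon,0]$ and compare it, via a Rouch\'e-type argument, with the determinant of the diagonal part whose zeros are exactly the $\mu_j$. The $\mu_j$ in $[a,b]$ are counted by the Weyl asymptotics above; the remainder $O(\delta|\ln\delta|\varepsilon)$ arises from eigenvalues lying in thin boundary strips of width $O(\delta\varepsilon)$ around $\Re z=a,b$, together with the logarithmic loss inherent in estimating winding numbers of a holomorphic function under a perturbation that is small only in operator norm.

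The main obstacle is the microlocal analysis near the hyperbolic fixed point $(0,0)$ at the critical energy. By \eqref{int.9} the trapped set on $p^{-1}(0)\cap\overline{\mathscr S}_0$ reduces to this single point, and by \eqref{int.8} it is a saddle of signature $(n-1,1)$. One needs quantitative resolvent bounds of the form $\|(P_\theta-z)^{-1}\chi\|=O(h^{-1}|\ln h|)$ for cutoffs $\chi$ supported away from $\overline{\mathscr U}_0$, uniformly for $z$ in the window. These rely on a microlocal normal form at the hyperbolic fixed point together with WKB constructions along its stable and unstable manifolds, patched to the complex-scaled exterior propagator. Controlling the tunneling through the top of the barrier on the scale $\varepsilon$ is what ultimately produces the $|\ln\delta|$ factor in the remainder in (B), and making this accounting sharp enough to match the Weyl leading term is the technical bulk of the argument.
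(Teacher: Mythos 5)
Your outline proposes complex scaling plus a Grushin problem, whereas the paper works with escape functions, the Helffer--Sj\"ostrand weighted spaces $H(\Lambda_{tG};m)$, and relative determinants counted via Jensen's formula and the zero-counting theorem of \cite{Sj10}. These are genuinely different frameworks at the level of scaffolding, but the more serious issue is that several ideas on which the paper's estimates hinge are absent from your sketch, and without them the steps you describe would not close.

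First, you do not address the fact that at the critical energy $E=0$ the barrier between well and sea vanishes, so an interior/exterior decomposition of $P_\theta-z$ does not separate cleanly on the natural $\sqrt{\varepsilon}$ scale. The paper's central device is to add a bump $\chi_\varepsilon$ of height $\asymp\varepsilon$ and width $\asymp\sqrt\varepsilon$ at the saddle, forming $P_\varepsilon=P+\chi_\varepsilon$. This restores a genuine barrier and makes $\|P-P_\varepsilon\|_{\mathrm{tr}}=\mathcal{O}(\varepsilon^{n+1}h^{-n})$ small relative to the main term. Your effective Hamiltonian would need an analogous regularisation before the ``transfer term encoding the leak through the saddle'' can be controlled uniformly in $z$ down to $\Im z=0$; the Rouch\'e comparison against the diagonal part fails as stated because those diagonal entries can vanish while the off-diagonal coupling, though exponentially small, is then not dominated.

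Second, the remainder $\mathcal{O}(\delta|\ln\delta|\varepsilon)$ does not come from ``logarithmic loss in estimating winding numbers.'' It arises because, to apply the counting theorem of \cite{Sj10} with a Lipschitz weight of the form $\widetilde d(\widetilde z)\asymp\delta+|\Im\widetilde z|$ on the boundary of the rescaled box, one distributes $N=\mathcal{O}(|\ln\delta|)$ reference points $\widetilde z_j^0$ along $\partial\widetilde\Gamma$ with geometrically growing spacing; and to obtain the required lower bound $\mathcal{D}_P(z_j^0)\ge h^{-n}(\varphi(z_j^0)-\mathcal{O}(\varepsilon\delta))$ at those points, the paper first manufactures an operator $P_{\varepsilon,\delta}=P_{\varepsilon,A,B,\delta}$ whose spectrum has gaps of width $\delta\varepsilon$ at $\Re z=a,b$. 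You make no provision for these spectral gaps, yet they are exactly what turns the crude bound $\mathcal{O}_\delta(1)\varepsilon^n h^{-n}$ of part (A) into the Weyl-leading count of part (B).

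Third, the resolvent bound you quote, $\|(P_\theta-z)^{-1}\chi\|=\mathcal{O}(h^{-1}|\ln h|)$, is the wrong one for this scaling regime. That estimate is characteristic of a fixed energy strictly below the barrier top (normally hyperbolic trapping), not of an $\varepsilon$-window shrinking to the critical energy. The relevant bounds in the paper are $\mathcal{O}(1/(\delta\varepsilon))$ or $\mathcal{O}(1/\mathrm{dist}(z,\sigma(P^{\mathrm{int}}_\varepsilon)))$ on the weighted spaces $H(\Lambda_{tG};\widetilde r_\varepsilon^2)\to H(\Lambda_{tG})$ (Propositions~\ref{peps1} and~\ref{or1}); the $|\ln h|$ you invoke does not appear and would not give the uniformity in $\varepsilon,\delta,h$ that the theorem requires. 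So while a complex-scaling/Grushin route is in principle available (the paper itself notes that complex distortion could replace the weighted-space framework near infinity), your sketch omits the bump at the saddle, the artificial spectral gaps, and the specific zero-counting machinery with a shrinking Lipschitz weight -- and without those the argument does not reach the stated remainder.
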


More precise results are known when $n=1$. In this case the function
$\omega $ has a logarithmic singularity at zero. See \cite{FuRa98} and  \cite{BoFuRaZe14}.

At least formally our result is similar to recent ones about
Helmholtz resonators and other capting devices. See \cite{DuGrMa16}, \cite{Sj01}
and also  \cite{DaJi18}. 

\par\medskip\noindent
The remainder of the paper is devoted to the proof of Theorem \ref{int1}. 
{We shall use suitable escape functions and the corresponding spaces of distributions with exponential phase space weights as developed
in \cite{HeSj86}. The main work will take place near the island and we found it convenient to use the global  framework of \cite{HeSj86}. The control near infinity could also be obtained using complex distorsion techniques. (See \cite{DyZw19} for an overview).}

\par\bigskip\noindent 
{{\it Acknowledgments.} We are grateful to the referee for useful remarks that have led to improvements of the exposition.}

\noindent
{The IMB receives support from the EIPHI Graduate School
(contract ANR-17-EURE-0002).}

\section{Outline }\label{out}
\setcounter{equation}{0}

\subsection{Escape functions (Sections \ref{esc}, \ref{bp})}
After a linear change of coordinates, we may assume that near $x=(x',x_n)=0$
$$
p(x,\xi)=\underbrace{\frac{\kappa }{2}\left(\xi _n^2-x_n^2 \right)+q(x',\xi')}_{=:p_0(x,\xi )}
+{\mathcal{O}}(|x|^3),
$$
where $\kappa >0$ and {$q$ is a positive definite quadratic form}. 
Moreover, {$x_n<0$} in the well and {$x_n>0$} in the
sea. Assume for simplicity
that {$\kappa =1$}. 

\par\smallskip 
Let {$G_0(x,\xi )=x_n\xi _n$}. Then {$H_{p_0}G_0=x_n^2+\xi_n^2$}. {We will
define $G=G^\varepsilon $ near $(0,0)$ as 
a truncation of $G_0$. See (\ref{esc.5})}.  
We show that, with $\rho=(x,\xi)$:
\begin{equation*}
\partial _\rho ^\alpha G(\rho )={\mathcal{O}}(1)\Big(\varepsilon +\rho ^2\Big)^{1-\frac{|\alpha|}{2}},
\quad \alpha \in {\mathbb{N}}^{2n}
\end{equation*}
and that there exists $C\ge 1$ such that in {$\mathrm{neigh\,}(0,\mathbb{R}^{2n})$}:
{$$
\hbox{If }p(\rho)<-\frac{\varepsilon}{C}+\frac{\rho ^2}{C}\hbox{ and }\ x_n(\rho )\ge 0,
\hbox{ then } \, H_pG \asymp \varepsilon +\rho ^2.
$$}
Here we write $X\asymp Y$ for $X,Y\in {\mathbb{R}}$ if $X,\, Y$ have the
same sign (or vanish) and $X={\mathcal{O}}(Y)$ and $Y={\mathcal{O}}(X)$.

\par\medskip
We add a bump at the saddle point in order to create a barrier
between the well and the sea: Let {$\chi (x,\xi )=e^{-(\beta x)^2-(\beta \xi )^2}$}
where {$\beta >0$} is small but fixed. Put
{$$
p_\varepsilon =p+\chi _\varepsilon ,\qquad \chi _\varepsilon (x,\xi )=\varepsilon \chi
\Big(\frac{x}{\sqrt{\varepsilon}},\frac{\xi}{\sqrt{\varepsilon}}\Big).
$$}
Then there exist {$b,c>0$} such that:
{$$
\hbox{If }p_\varepsilon(\rho) <b\varepsilon +c\rho ^2\hbox{ and }\ x_n(\rho )\ge 0,\hbox{
  then } H_{p_\varepsilon }G\asymp \varepsilon +\rho ^2.
$$}
\begin{prop}
Possibly after a dilation in {$\varepsilon $} in the definition of {$G$} we
have for {$0<t\ll 1$} and all {$\rho $} in {$\mathrm{neigh\,}(0,\mathbb{R}^{2n})$}
with {$x_n(\rho )\ge 0$}:
{\[
  \begin{aligned}
&\hbox{If }\Re p(\rho +itH_G(\rho ))\le -\frac{\varepsilon
}{\widetilde{C}}+\frac{\rho ^2}{2C},\hbox{ then }\Im p(\rho
+itH_G(\rho ))\asymp -t(\varepsilon +\rho ^2).\\ 
&\hbox{If }\Re p_\varepsilon (\rho +itH_G(\rho ))\le b\varepsilon
+\frac{c}{2}\rho ^2,\hbox{ then }\Im p_\varepsilon (\rho
+itH_G(\rho ))\asymp -t(\varepsilon +\rho ^2).
\end{aligned}
\]}
\end{prop}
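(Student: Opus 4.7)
The plan is to Taylor expand $p$ (resp.\ $p_\varepsilon$) around the real point $\rho$ in the complex direction $itH_G(\rho)$, use the derivative estimates on $G$ stated just above to control the remainder, and then invoke the already-established sign/size statement on $H_pG$ (resp.\ $H_{p_\varepsilon}G$) that plays the role of the leading term.

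First, treating $p$ as its holomorphic extension near the real axis, I would write
\[
p(\rho+itH_G(\rho))=p(\rho)+it\,H_Gp(\rho)-\frac{t^2}{2}\langle p''(\rho)H_G(\rho),H_G(\rho)\rangle+R_p(\rho,t),
\]
with $R_p=\mathcal O(t^3 |H_G(\rho)|^3)$ coming from the third-order remainder (the bounds on $p'''$ are uniform in the chosen neighborhood since $p$ is smooth there). Taking real and imaginary parts and using the identity $H_Gp=-H_pG$,
\[
\Re p(\rho+itH_G(\rho))=p(\rho)-\tfrac{t^2}{2}\langle p''(\rho)H_G,H_G\rangle+\mathcal O(t^4)|H_G|^4,
\]
\[
\Im p(\rho+itH_G(\rho))=-t\,H_pG(\rho)+\mathcal O(t^3)|H_G|^3.
\]

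Next I would plug in the $G$-estimates recalled in the previous subsection: the bound $\partial_\rho^\alpha G=\mathcal O(1)(\varepsilon+\rho^2)^{1-|\alpha|/2}$ gives $|H_G(\rho)|=\mathcal O((\varepsilon+\rho^2)^{1/2})$ and $|\langle p''(\rho)H_G,H_G\rangle|=\mathcal O(\varepsilon+\rho^2)$. Therefore
\[
\Re p(\rho+itH_G(\rho))=p(\rho)+\mathcal O(t^2)(\varepsilon+\rho^2),\qquad
\Im p(\rho+itH_G(\rho))=-t\,H_pG(\rho)+\mathcal O(t^3)(\varepsilon+\rho^2)^{3/2}.
\]
Since $(\varepsilon+\rho^2)^{1/2}$ is small in a sufficiently small neighborhood of the origin, the $t^3$ remainder is dominated by $o(t)(\varepsilon+\rho^2)$ for $0<t\ll 1$.

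For the first assertion I would then argue as follows. The hypothesis $\Re p(\rho+itH_G(\rho))\le -\varepsilon/\widetilde C+\rho^2/(2C)$, combined with the expansion above, yields $p(\rho)\le -\varepsilon/\widetilde C+\rho^2/(2C)+\mathcal O(t^2)(\varepsilon+\rho^2)$. Choosing $\widetilde C$ large enough compared to $C$ and $t$ small enough (this is exactly the freedom provided by the prescribed dilation in $\varepsilon$ inside the definition of $G$, which scales the threshold constants), this forces $p(\rho)<-\varepsilon/C+\rho^2/C$, and $x_n(\rho)\ge 0$ by assumption. The statement recalled at the end of Section \ref{esc} then gives $H_pG(\rho)\asymp \varepsilon+\rho^2$, so that $-tH_pG(\rho)\asymp -t(\varepsilon+\rho^2)$, and the imaginary remainder $\mathcal O(t^3)(\varepsilon+\rho^2)^{3/2}$ is absorbed. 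This proves $\Im p(\rho+itH_G(\rho))\asymp -t(\varepsilon+\rho^2)$.

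The second assertion is obtained by repeating the argument word-for-word with $p$ replaced by $p_\varepsilon=p+\chi_\varepsilon$, using the corresponding statement from Section \ref{bp} that $H_{p_\varepsilon}G\asymp \varepsilon+\rho^2$ whenever $p_\varepsilon(\rho)<b\varepsilon+c\rho^2$ and $x_n(\rho)\ge 0$. The only point to check is that $\chi_\varepsilon$ and its derivatives satisfy the same symbol-type bounds as $p$ on the scale $(\varepsilon+\rho^2)^{1/2}$, which is immediate from the definition $\chi_\varepsilon(\rho)=\varepsilon\chi(\rho/\sqrt{\varepsilon})$. The only genuine obstacle is the bookkeeping of the constants $\widetilde C, C, b, c$ so that the $\mathcal O(t^2)(\varepsilon+\rho^2)$ shift of $\Re p$ actually fits inside the gap between the threshold in the hypothesis and the threshold in the earlier statement; this is precisely what the dilation in $\varepsilon$ inside $G$ is designed to accommodate.
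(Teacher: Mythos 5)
Your proof is correct and follows the same strategy as the paper (Propositions~\ref{ltg1} and \ref{ltg2}): Taylor expand $p$ (resp.\ $p_\varepsilon$) along the complex deformation $\rho+itH_G(\rho)$, use the $G$-estimates to control the remainders, deduce from the hypothesis on $\Re p$ the real inequality $p(\rho)\le-\varepsilon/\widetilde C'+\rho^2/C$, and then invoke Propositions~\ref{esc1}/\ref{bp1} to get $H_pG\asymp\varepsilon+\rho^2$ (resp.\ $H_{p_\varepsilon}G\asymp\varepsilon+\rho^2$). Two small remarks: the paper actually uses the sharper bound $|H_G|=\mathcal O(|\rho|)$ from \eqref{esc.7} (not the $\mathcal O((\varepsilon+\rho^2)^{1/2})$ you quote from \eqref{esc.6}), which makes the $\Re p$ shift purely $\mathcal O(t^2)\rho^2$ and avoids re-dilating the $\varepsilon$-threshold; and the claim that $\chi_\varepsilon$ ``immediately'' satisfies the same bounds as $p$ glosses over the fact that $\partial^\alpha\chi_\varepsilon=\mathcal O(\varepsilon^{1-|\alpha|/2}e^{-\rho^2/(C\varepsilon)})$ blows up in $\varepsilon$ for $|\alpha|\ge3$, and one must balance this against the Gaussian decay to recover $\mathcal O((\varepsilon+\rho^2)^{1-|\alpha|/2})$ — this is precisely the content of the paper's \eqref{ltg.9}--\eqref{ltg.13}, so it deserved a line rather than a dismissal. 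Neither point affects the validity of your conclusion.
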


\subsection{Resolvents (Sections \ref{ltg}, \ref{prep},
  \ref{pint}, \ref{pext}, \ref{peps}, \ref{or}, \ref{dil})}
  Let {$P_\varepsilon $} be the {$h$}-Weyl quantization of {$p_\varepsilon $}. Let
  {$t>0$} be small enough and fixed.
We can extend {$G$} as a classical escape function (\cite{HeSj86}, \cite{GeSj87}) to
{${\mathbb{R}}^n_x\times {\mathbb{R}}^n_\xi $}, equal to zero over {${\mathscr{U}}_0$} and with
{$H_pG>0$} in {${{p^{-1}(0)}_\vert}_{{\mathscr{S}}_0\setminus
  \mathrm{neigh\,}(0,\mathbb{R}^n)}$}.  { After an 
  $\varepsilon$-dependent dilation 
  (see Definition \ref{APepsilon.2} and the following discussion)}, \cite{HeSj86} applies and
using the spaces $H(\Lambda_{tG};m)$ \footnote{Here $m$ is an order function, see \eqref{dil.7epsilon}.} ,
from that work,
we have a well-defined operator
{$$
P_\varepsilon :\, H(\Lambda _{tG};\widetilde{r}_\varepsilon ^2)
\longrightarrow H(\Lambda_{tG};1),
$$}
where
{$$
 \widetilde{r}_\varepsilon (x,\xi)=(r_\varepsilon ^2(x)+\xi ^2)^{\frac{1}{2}}\quad \hbox{and}\quad 
 r_\varepsilon(x)=\left(\frac{\varepsilon +x^2 }{1+x^2} \right)^{\frac{1}{2}}, 
 \,\, \forall (x,\xi)\in\mathbb{R}^{2n}.
$$}
See Appendix \ref{dil} for more details.  From now on, we denote by 
$H(\Lambda_{tG})$ the space $H(\Lambda_{tG};1)$. 

\par\noindent
Let {$P_\varepsilon ^{\mathrm{int}}$} be a suitable self-adjoint reference operator,
obtained from {$P_\varepsilon $} by ``filling the sea up to the level
{$\frac{\varepsilon}{{\mathcal O}(1)}$}''. We have Weyl asymptotics for its spectrum.

\begin{prop}
  For {$z$} satisfying
  {\begin{equation}\label{re.1}
-{\mathcal{O}}(\varepsilon )<\Re z<\frac{\varepsilon }{{\mathcal{O}}(1)}
\quad\mathrm{and}\quad
-\frac{\varepsilon }{{\mathcal{O}}(1) }<\Im z<{\mathcal{O}}(\varepsilon )
\end{equation}}
with
{\begin{equation}\label{re.2}
\mathrm{dist\,}(z,\sigma (P_\varepsilon^{\mathrm{int}}))\ge
\widetilde{\delta }\ge h^{N_0},
\end{equation}}
we have
$$
(P_\varepsilon -z)^{-1}={\mathcal{O}}(\frac{1}{\widetilde{\delta}}):\, H(\Lambda
_{tG}) \longrightarrow H(\Lambda _{tG},\widetilde{r}_\varepsilon ^2)\cap H(\Lambda _{tG}).
$$
\noindent
Inside the set (\ref{re.1})
there exists a bijection {$b:\sigma (P^{\mathrm{int}}_\varepsilon )\longrightarrow \sigma
(P_\varepsilon )$} with {$b(\mu )-\mu ={\mathcal{O}}(h^\infty )$}. 
\end{prop}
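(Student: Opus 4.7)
The strategy is the classical Helffer--Sj\"ostrand reduction via a Grushin problem built from the low-lying eigenfunctions of $P_\varepsilon^{\mathrm{int}}$. The previous proposition provides the crucial microlocal input: on the deformed Lagrangian $\Lambda_{tG}$, wherever $\Re p_\varepsilon \le b\varepsilon +(c/2)\rho^2$, one has $\Im p_\varepsilon \asymp -t(\varepsilon +\rho^2)$, so $P_\varepsilon -z$ is uniformly microlocally elliptic outside any fixed neighborhood of the trapped point $(0,0)$ in phase space, as long as $z$ lies in the box \eqref{re.1}. Standard sharp G\aa rding-type arguments on the FBI realization of $H(\Lambda_{tG})$ then yield a local inverse there of size $O(1/\varepsilon)$, mapping $H(\Lambda_{tG})$ to $H(\Lambda_{tG},\widetilde r_\varepsilon^{\,2})\cap H(\Lambda_{tG})$ with the full elliptic gain. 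Over $\mathscr{U}_0$, the escape function $G$ vanishes and $H(\Lambda_{tG})$ coincides locally with $L^2$, so $P_\varepsilon^{\mathrm{int}}$ has, by Weyl's law, $N=O(h^{-n}\varepsilon^n)$ eigenvalues $\mu_1,\dots,\mu_N$ in a neighborhood of $\Re z$ with orthonormal eigenfunctions $\phi_j$.

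The next step is to build a Grushin problem around the well. Using the Agmon lower bound between $\overline{\mathscr{U}}_0$ and the complement of a slightly enlarged well (which remains positive away from the isolated contact point $0$, thanks to \eqref{int.7} and \eqref{int.10}), cutoffs $\widetilde\phi_j = \chi\phi_j$ are nearly orthonormal and satisfy $(P_\varepsilon - \mu_j)\widetilde\phi_j = O(e^{-1/Ch})$ in $H(\Lambda_{tG})$. Set $R_+u=(\langle u,\widetilde\phi_j\rangle)_{j=1}^N$ and $R_-u_-=\sum_j u_-(j)\widetilde\phi_j$, and consider
\[
\mathcal{P}(z)=\begin{pmatrix} P_\varepsilon -z & R_- \\ R_+ & 0 \end{pmatrix}\!:\ H(\Lambda_{tG},\widetilde r_\varepsilon^{\,2})\oplus \mathbb{C}^N\longrightarrow H(\Lambda_{tG})\oplus\mathbb{C}^N.
\]
Combining the sea-side inverse with finite-dimensional linear algebra on $\mathrm{span}(\widetilde\phi_j)$, a standard Schur-complement argument shows that $\mathcal{P}(z)$ is bijective with inverse $\mathcal{E}(z)=\begin{pmatrix} E & E_+ \\ E_- & E_{-+}\end{pmatrix}$ of polynomial-in-$1/\varepsilon$ size.

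The effective Hamiltonian satisfies $E_{-+}(z)=\mathrm{diag}(\mu_j-z)+O(h^\infty)$, as follows from the near-eigenfunction relations together with the $O(e^{-1/Ch})$ error above. The identity $(P_\varepsilon-z)^{-1}=E - E_+ E_{-+}^{-1} E_-$ then gives the claimed resolvent bound $O(1/\widetilde\delta)$, since $\|E_{-+}(z)^{-1}\|=O(1/\widetilde\delta)$ whenever $\mathrm{dist}(z,\sigma(P_\varepsilon^{\mathrm{int}}))\ge \widetilde\delta\ge h^{N_0}$, while $E$, $E_\pm$ are a priori of size $O(1/\varepsilon)$ and the dominant singularity in $z$ comes from $E_{-+}^{-1}$. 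For the spectral bijection, $\sigma(P_\varepsilon)$ inside the box consists exactly of the zeros of $\det E_{-+}(z)$, and this determinant differs from $\prod_j(\mu_j-z)$ by $O(h^\infty)$ (uniformly in the box). A Rouch\'e argument in disks of radius, say, $h^{N_0+1}$ around each $\mu_j$ then produces the bijection $b$ with $b(\mu)-\mu=O(h^\infty)$.

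The main obstacle is upgrading the usual $O(h^N)$ estimate in the Agmon approximation to genuine $O(h^\infty)$ (or, equivalently, exponentially small) error in $H(\Lambda_{tG})$. This is where the analyticity of $V$ near $\overline{\mathscr{S}}_0$ and the global phase space weights of \cite{HeSj86} are essential: they allow the eigenfunctions $\phi_j$ and their cutoff errors to be controlled across the degenerate contact point $0$, whose zero Agmon distance to $\mathscr{S}_0$ would otherwise destroy the exponential decay and leave only polynomial accuracy. The added bump $\chi_\varepsilon$ and the scale $\sqrt{\varepsilon}$ inherent in the order function $\widetilde r_\varepsilon^{\,2}$ must be tracked carefully throughout these estimates, which is exactly what the $\varepsilon$-dependent dilation of Section \ref{dil} is designed to achieve.
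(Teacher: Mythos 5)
Your proposal is essentially correct and is a legitimate route to the result, but it is a genuinely different path from the one the paper takes. You build the classical Helffer--Sj\"ostrand Grushin problem: select the $N=\mathcal{O}(h^{-n}\varepsilon^{n})$ low-lying eigenfunctions of $P_\varepsilon^{\mathrm{int}}$, cut them off, and reduce $(P_\varepsilon -z)^{-1}$ to an $N\times N$ effective Hamiltonian $E_{-+}(z)=\mathrm{diag}(\mu_j-z)+\mathcal{O}(h^\infty)$; the resolvent bound follows from $\|E_{-+}^{-1}\|\le C/\widetilde\delta$ and the bijection from a Rouch\'e argument on $\det E_{-+}$. The paper instead constructs an explicit approximate inverse by gluing the two reference resolvents: it proves $(P_\varepsilon^{\mathrm{ext}}-z)^{-1}=\mathcal{O}(1/\varepsilon)$ on the deformed spaces (Section~\ref{pext}), proves exponentially weighted estimates for $(P_\varepsilon^{\mathrm{int}}-z)^{-1}$ (Section~\ref{pint}, Proposition~\ref{pint2}), forms $R_0(z)=(P_\varepsilon^{\mathrm{ext}}-z)^{-1}+(P_\varepsilon^{\mathrm{int}}-z)^{-1}\widehat{\chi}_{\mathscr{U}_\varepsilon}(P_\varepsilon^{\mathrm{ext}}-z)^{-1}$ so that $(P_\varepsilon -z)R_0(z)=1-K$ with $K=\mathcal{O}(e^{-\varepsilon/\mathcal{O}(h)})$ (equations \eqref{peps.9}--\eqref{peps.11}), and gets the bijection by comparing the spectral projections $\pi_{\varepsilon,\gamma}$ and $\pi^{\mathrm{int}}_{\varepsilon,\gamma}$ via contour integration (Proposition~\ref{peps2}). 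Your Grushin approach has the usual benefit of producing the effective Hamiltonian explicitly, which is valuable if one wants finer information on eigenvalue shifts; the paper's gluing argument is somewhat lighter and avoids having to check well-posedness of a Grushin matrix.

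Two small inaccuracies, neither fatal. First, the exponential smallness of the cutoff errors is $e^{-\varepsilon/\mathcal{O}(h)}$, not $e^{-1/Ch}$ uniformly in $\varepsilon$: the barrier created by the bump $\chi_\varepsilon$ has height $\asymp\varepsilon$ and width $\asymp\sqrt{\varepsilon}$, so the Agmon distance across it is $\asymp\varepsilon$. This is harmless in the regime of the proposition, where $\varepsilon$ is fixed and $h\le h(\delta,\varepsilon)\to 0$, since $e^{-\varepsilon/\mathcal{O}(h)}=\mathcal{O}(h^\infty)$ there; but the constant $C$ in your $e^{-1/Ch}$ must be understood as $\varepsilon$-dependent. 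Second, the invertibility of your $\mathcal{P}(z)$ is not literally a Schur-complement argument (which would presuppose invertibility of one of the blocks); the standard route is to exhibit an approximate inverse of $\mathcal{P}(z)$ built from the elliptic ``exterior'' inverse and the reduction of $P_\varepsilon^{\mathrm{int}}-z$ on the orthogonal complement of $\mathrm{span}(\widetilde\phi_j)$, then a Neumann series. You have the right ingredients in hand; the phrasing is just slightly off.
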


  For {$0<\delta \ll 1$}, {$0<\varepsilon \le \varepsilon (\delta )$},
  {$0<h\le h(\delta,\varepsilon)$}, let {$-{\mathcal{O}}(1)<A<B<\frac{1}{{\mathcal O}(1)}$}. 
  By moving the eigenvalues of {$P_\varepsilon ^{\mathrm{int}}$}
  out of the gaps {$A\varepsilon +]-\delta \varepsilon ,\delta \varepsilon [$}
  and {$B\varepsilon +]-\delta \varepsilon ,\delta \varepsilon [$} we can
  construct an operator {$P_{\varepsilon,\delta }:=P_{\varepsilon ,A,B,\delta }:\, H(\Lambda
  _{tG},\widetilde{r}_\varepsilon ^2)\longrightarrow H(\Lambda _{tG})$} such that
  \begin{itemize}
  \item the eigenvalues of {$P_{\varepsilon,\delta }$} with
    {$-{\mathcal{O}}(\varepsilon )<\Re z<\frac{\varepsilon}{{\mathcal{O}}(1)}$}
    belong to a complex
    {$h^{N_0}$}-neighborhood of
    $$
    R_{\varepsilon ,A,B,\delta }:=\Big]-{\mathcal{O}}(\varepsilon),
    \frac{\varepsilon}{{\mathcal{O}}(1)}\Big[\setminus
   \left(\Big\{A\varepsilon, B\varepsilon\Big\}+\Big]-\frac{\varepsilon \delta}{2},
   \frac{\varepsilon \delta}{2}\Big[\right),
   $$
 \item
   {\begin{multline*}
   \# \Big(\sigma (P_{\varepsilon,\delta })\cap \big(
   ]A\varepsilon ,B\varepsilon [+i]-o(\varepsilon ),o(\varepsilon )[\big)\Big)\\
   =\left(\frac{1}{2\pi h}
   \right)^n\Big(\omega (\varepsilon B)-\omega (\varepsilon A)\Big)
   +{\mathcal{O}}(\delta \varepsilon )h^{-n}.
   \end{multline*}}
  \end{itemize}

\vskip1.4cm


\begin{figure}
\hskip1.5cm
\begin{picture}(0,0)%
\includegraphics[scale=1.5]{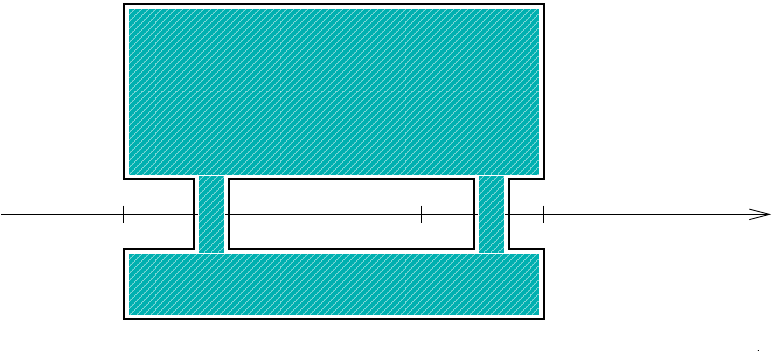}%
\end{picture}%
\setlength{\unitlength}{789sp}%
\begingroup\makeatletter\ifx\SetFigFont\undefined%
\gdef\SetFigFont#1#2#3#4#5{%
  \reset@font\fontsize{#1}{#2pt}%
  \fontfamily{#3}\fontseries{#4}\fontshape{#5}%
  \selectfont}%
\fi\endgroup%
\begin{picture}(12302,8578)(-58,-8209)
\put(15250,-3950){\makebox(0,0)[lb]{\smash{{\SetFigFont{9}{7.2}{\rmdefault}{\mddefault}{\updefault}\color{Green}$0$}}}}
\put(14820,-3500){\makebox(0,0)[lb]{\smash{{\SetFigFont{9}{7.2}{\rmdefault}{\mddefault}{\updefault}\color{Green}$\times$}}}}
\put(3300,-3000){\makebox(0,0)[lb]{\smash{{\SetFigFont{9}{7.2}{\rmdefault}{\mddefault}{\updefault}\textbf{$-C\varepsilon$}}}}}
\put(4100,-3500){\makebox(0,0)[lb]{\smash{{\SetFigFont{9}{7.2}{\rmdefault}{\mddefault}{\updefault}\textbf{$\times$}}}}}
\put(7000,-3000){\makebox(0,0)[lb]{\smash{{\SetFigFont{9}{7.2}{\rmdefault}{\mddefault}{\updefault}\textbf{$A\varepsilon$}}}}}
\put(7250,-3500){\makebox(0,0)[lb]{\smash{{\SetFigFont{9}{7.2}{\rmdefault}{\mddefault}{\updefault}\textbf{$\times$}}}}}
\put(17300,-3000){\makebox(0,0)[lb]{\smash{{\SetFigFont{9}{7.2}{\rmdefault}{\mddefault}{\updefault}\textbf{$B\varepsilon$}}}}}
\put(17300,-3500){\makebox(0,0)[lb]{\smash{{\SetFigFont{9}{7.2}{\rmdefault}{\mddefault}{\updefault}\textbf{$\times$}}}}}
\put(19700,-2700){\makebox(0,0)[lb]{\smash{{\SetFigFont{15}{7.2}{\rmdefault}{\mddefault}{\updefault}\textbf{$\frac{\varepsilon}{C}$}}}}}
\put(19200,-3500){\makebox(0,0)[lb]{\smash{{\SetFigFont{9}{7.2}{\rmdefault}{\mddefault}{\updefault}\textbf{$\times$}}}}}
\put(27800,-4300){\makebox(0,0)[lb]{\smash{{\SetFigFont{15}{7.2}{\rmdefault}{\mddefault}{\updefault}\textbf{$\mathbb{R}$}}}}}
\put(-200,4000){\makebox(0,0)[lb]{\smash{{\SetFigFont{12}{7.2}{\rmdefault}{\mddefault}{\updefault}\color{Red}$\Im(z)=C\varepsilon$}}}}
\put(3000,-2100){\makebox(0,0)[lb]{\smash{{\SetFigFont{12}{7.2}{\rmdefault}{\mddefault}{\updefault}\color{Red}$\delta\varepsilon$}}}}
\put(2300,-5000){\makebox(0,0)[lb]{\smash{{\SetFigFont{12}{7.2}{\rmdefault}{\mddefault}{\updefault}\color{Red}$-\delta\varepsilon$}}}}
\put(2300,-7400){\makebox(0,0)[lb]{\smash{{\SetFigFont{15}{7.2}{\rmdefault}{\mddefault}{\updefault}\color{Red}$-\frac{\varepsilon}{C}$}}}}
\put(21000,1000){\makebox(0,0)[lb]{\smash{{\SetFigFont{12}{7.2}{\rmdefault}{\mddefault}{\updefault}\color{Blue}$R_{\varepsilon,\delta}$}}}}
\put(19300,500){\makebox(0,0)[lb]{\smash{{\SetFigFont{12}{7.2}{\rmdefault}{\mddefault}{\updefault}\color{Blue}{\Large$\longleftarrow$}}}}}
\put(25000,3000){\makebox(0,0)[lb]{\smash{{\SetFigFont{15}{7.2}{\rmdefault}{\mddefault}{\updefault}\color{Red}$\mathbb{C}$}}}}

\end{picture}%
\caption{{The set ${R}_{\varepsilon,\delta }$.}}\label{Repsdelta}
\end{figure}

  
\subsection{Relative determinants (Sections \ref{det}, \ref{ep})}

Recall (see e.g.\ \cite{GoKr69}) that under suitable but very general
assumptions on the linear operators $\mathcal{A}$ and $\mathcal{B}$,
{\begin{multline*}
|\det {\mathcal A}{\mathcal B}^{-1}|=|\det \left( 1+({\mathcal A}-{\mathcal B}){\mathcal B}^{-1}\right)|  
\le \exp \big\|({\mathcal A}-{\mathcal B}){\mathcal B}^{-1}\big\|_{\mathrm{tr}}
\le \exp \left(\big\| {\mathcal A}-{\mathcal B}\big\|_{\mathrm{tr}} \big\| {\mathcal B}^{-1}\big\| \right).
\end{multline*}}
 \par\noindent
Here $\Vert\cdot\Vert$ denotes the operator norm and 
$\Vert\cdot\Vert_{\mathrm tr}$ the trace class norm.
Let introduce the following sets, (see Figure \ref{Repsdelta}):
\begin{itemize}
 
 \item[{}] 
 $$
 R:=\big]-{\mathcal{O}}(\varepsilon ),\frac{\varepsilon}{{\mathcal{O}}(1)}\big[
 +i\big]-\frac{\varepsilon}{{\mathcal O}(1)},{\mathcal{O}}(\varepsilon)\big[,
 $$
 
 \item[{}] 
 
 $$
 R_\delta:=\{ z\in R;\, |\Im z|>\delta\varepsilon\},\quad \textrm{and}
 $$
 
 \item[{}] 
 
\begin{equation*}
\begin{aligned}
  R_{\varepsilon,\delta}:=
  R_\delta 
  & \bigcup \Big( A\varepsilon +\big]-\frac{\delta \varepsilon}{4},
  \frac{\delta \varepsilon}{4}\big[+i\big[-\delta \varepsilon ,\delta \varepsilon\big]\Big)\\
  & \bigcup \Big( B\varepsilon +\big]-\frac{\delta \varepsilon}{4},
  \frac{\delta \varepsilon}{4}\big[+i\big[-\delta \varepsilon ,\delta \varepsilon\big]\Big).           
\end{aligned}
\end{equation*}
\end{itemize}

\par\smallskip\noindent
We can construct an operator {$P_\varepsilon ^{\mathrm{ext}}$} as a ``trace
class filling of {$P_\varepsilon $} over {${\mathscr{U}}_0$}'' 
so that {$(P_\varepsilon^{\mathrm{ext}}-z)^{-1}$} is 
{${\mathcal{O}}(\frac{1}{\varepsilon})$} as a bounded operator from $H(\Lambda _{tG})$
to itself, for all {$z\in R$}. We also have
\begin{equation*}
 \big\| P_\varepsilon -P_\varepsilon^{\mathrm{ext}}\big\|_{\mathrm{tr}}={\mathcal{O}}(h^{-n}),\,\,
\big\| P-P_\varepsilon \big\|_{\mathrm{tr}}={\mathcal O}(\varepsilon^{n+1}h^{-n})\,\, {\textrm{and}}
\end{equation*}
\begin{equation*}
 \big\| P_\varepsilon -P_{\varepsilon,\delta } \big\|_{\mathrm{tr}}={\mathcal O}\Big((\varepsilon \delta)^2 h^{-n}\Big).
\end{equation*}

\par\noindent
Define
{\[\begin{aligned}
{\mathcal D}_P(z)&=\ln \left|\det (P-z)(P_\varepsilon ^{\mathrm{ext}}-z)^{-1}\right|\\
{\mathcal D}_{P_\varepsilon }(z)&=\ln \left|\det (P_\varepsilon -z)(P_\varepsilon ^{\mathrm{ext}}-z)^{-1}\right|\\
{\mathcal D}_{P_{\varepsilon ,\delta }}(z)&=\ln \left|\det (P_{\varepsilon ,\delta }-z)(P_\varepsilon ^{\mathrm{ext}}-z)^{-1}\right| .
\end{aligned}
\]}

\par\noindent
The zeros of {$\det \Big((P-z)(P_\varepsilon^{\mathrm{ext}}-z)^{-1}\Big)$} coincide with the resonances of {$P$}.
We have
{$$
{\mathcal D}_P-{\mathcal D}_{P_\varepsilon }
\left\{\begin{matrix}
 \le {\mathcal{O}}_\delta (1)\varepsilon ^n h^{-n} &\hbox{ in } & {\hskip-3cm} R_\delta,\\
&{} & {} & {}\\
 \ge   -{\mathcal{O}}_\delta(1)\varepsilon ^n h^{-n}
&\hbox{ in }  & R_\delta\cap \big\{z\in\mathbb{C};\, \Re z\le -\frac{\varepsilon}{{\mathcal{O}}(1)}\big\}. 
\end{matrix}\right.
$$}

\par\noindent
Similar estimates hold for {${\mathcal D}_P-{\mathcal D}_{P_{\varepsilon ,\delta}}$} 
with {$\varepsilon ^n h^{-n}$} replaced by {$\varepsilon \delta h^{-n}$} and
{$R_\delta $} by {$R_{\varepsilon,\delta }$}. 

\par\smallskip\noindent
Standard arguments, including Jensen's formula, lead to

\begin{prop} 
\begin{itemize}
 \item[{}] 
 
 \item[(A)] The number of resonances in {$R_\delta$} is {$\le {\mathcal{O}}_\delta (1)\varepsilon^n h^{-n}$} 
with the usual convention that {$0<\varepsilon \le \varepsilon (\delta )$}, {$0<h\le h(\delta,\varepsilon)$}.
 
 \item[(B)] There are plenty of {$z\in R_\delta $} for which
{\begin{equation}
{\mathcal D}_P(z)-{\mathcal D}_{P_\varepsilon }(z)\ge -{\mathcal{O}}_\delta (1)\varepsilon^n h^{-n}.
\end{equation}}

 \end{itemize}
\end{prop}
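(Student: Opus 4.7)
My plan is to derive both (A) and (B) from the estimates on $\mathcal{D}_P - \mathcal{D}_{P_\varepsilon}$ already established just above, combined with a Jensen / Riesz-representation counting argument for subharmonic functions. Part (B) is essentially a restatement of the lower bound $\mathcal{D}_P - \mathcal{D}_{P_\varepsilon} \ge -\mathcal{O}_\delta(1)\varepsilon^n h^{-n}$ displayed above: it holds throughout $R_\delta \cap \{\Re z \le -\varepsilon/\mathcal{O}(1)\}$, a set of area of order $\varepsilon^2$, comparable to the area of $R_\delta$ itself, so ``plenty'' of admissible $z$ exist without further work.

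For part (A), the structural observation that unlocks the estimate is that the meromorphic function $g(z) := \det(P-z)(P_\varepsilon - z)^{-1}$ is in fact holomorphic on each connected component $R_\delta^\pm := R_\delta \cap \{\pm\Im z > \delta\varepsilon\}$ of $R_\delta$. Indeed, its zeros are exactly the resonances of $P$, while its poles are the eigenvalues of $P_\varepsilon$, which by the bijection $b : \sigma(P_\varepsilon^{\mathrm{int}}) \to \sigma(P_\varepsilon)$ lie within $\mathcal{O}(h^\infty)$ of the real axis and hence outside $R_\delta$ once $h$ is small enough. Therefore $u := \log|g| = \mathcal{D}_P - \mathcal{D}_{P_\varepsilon}$ is subharmonic on each $R_\delta^\pm$, with Riesz measure equal to the counting measure of the resonances of $P$ there.

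I would then apply the Riesz representation on slightly enlarged domains $\Omega^\pm := R_{\delta/2}^\pm$, where the same bounds still yield $u \le M := \mathcal{O}_\delta(1)\varepsilon^n h^{-n}$, and where by (B) one may pick points $z_*^\pm \in \Omega^\pm \cap \{\Re z \le -\varepsilon/\mathcal{O}(1)\}$ with $u(z_*^\pm) \ge m := -\mathcal{O}_\delta(1)\varepsilon^n h^{-n}$. Writing $u = h_u - \int_{\Omega^\pm} G_{\Omega^\pm}(\cdot,w)\,d\mu(w)$ with least harmonic majorant $h_u$ and using the maximum principle to bound $h_u(z_*^\pm) \le M$ yields $\int_{\Omega^\pm} G_{\Omega^\pm}(z_*^\pm,w)\,d\mu(w) \le M - m$. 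Since $R_\delta^\pm \subset \Omega^\pm$ lies at distance $\ge \delta\varepsilon/2$ from $\partial\Omega^\pm$, the Green's function is bounded below there by a positive, scale-invariant constant $c_\delta > 0$ depending only on $\delta$, and summing over the two components gives the bound (A). The main technical obstacle is the careful $\delta$-dependent bookkeeping: confirming that the upper and lower bounds on $\mathcal{D}_P - \mathcal{D}_{P_\varepsilon}$ persist with the same form when $\delta$ is replaced by $\delta/2$ (so that the enlargement $\Omega^\pm$ is admissible), and exhibiting the Green's-function lower bound via the scaling $G_{\lambda\Omega}(\lambda\cdot,\lambda\cdot) = G_\Omega(\cdot,\cdot)$ that reduces the estimate to the geometry of a fixed reference rectangle independent of $\varepsilon$ and $h$.
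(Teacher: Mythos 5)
Your treatment of part~(B) is where the argument genuinely breaks down. You read ``plenty of $z \in R_\delta$'' as meaning that the lower bound holds on a sub-region of comparable area, namely $R_\delta \cap \{\Re z \le -\varepsilon/\mathcal{O}(1)\}$, and conclude no further work is needed. But the role of (B) in the sequel (it feeds into the application of Theorem~1.1 of \cite{Sj10} in Section~\ref{ep}) is to produce points with a lower bound on $\mathcal{D}_P - \mathcal{D}_{P_\varepsilon}$ that are \emph{densely distributed along any prescribed segment of $R_\delta$}, including where $\Re z$ is positive and close to $\varepsilon/\mathcal{O}(1)$. There the resolvent bound $\| (P-z)^{-1} \| = \mathcal{O}(1/(\delta\varepsilon))$ is not available (Proposition~\ref{or1} only gives it for $\Re z < -\varepsilon/\mathcal{O}(1)$), so the direct trace-norm estimate does not deliver a lower bound on $\mathcal{D}_P - \mathcal{D}_{P_\varepsilon}$. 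The paper bridges this gap by factorizing $f(z) = \det((P-z)(P_\varepsilon-z)^{-1}) = e^{G(z)}D_w(z;h)$ via Jensen's formula in a disc $D(z_0,r_0)$ around a good reference point $z_0$, bounding the Blaschke-type product $D_w$ from below on well-chosen circles, controlling the holomorphic $G$ by Harnack, and then iterating this scheme disc by disc until $R_\delta$ is covered by $\mathcal{O}_\delta(1)$ discs (see (\ref{det.21})--(\ref{det.28}) and the paragraph following (\ref{det.28})). Your proposal contains none of this propagation step, which is the actual content of (B).

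For part~(A), your Riesz-representation route is a legitimate, genuinely different way to turn the $u \le M$, $u(z_*)\ge m$ information into a zero count (the paper instead gets (A) as a byproduct of the Jensen-based covering). However, your Green's function lower bound is claimed on too large a set: you assert $R_\delta^\pm$ lies at distance $\ge \delta\varepsilon/2$ from $\partial\Omega^\pm$ where $\Omega^\pm = R_{\delta/2}^\pm$, but these two domains share the three outer edges coming from $\partial R$; only the horizontal edge $\Im z = \pm\delta\varepsilon$ is strictly inside $\Omega^\pm$. So $G_{\Omega^\pm}(z_*^\pm,\cdot)$ is \emph{not} bounded below on $R_\delta^\pm$ near the left/right/far edges, and the argument only counts zeros in $R_\delta$ away from those edges. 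This is exactly the shrinkage the paper also runs into --- note the detailed Proposition~\ref{det1}~(A) only claims the bound in $R_\delta \setminus (\partial R_\delta + D(0,\varepsilon\delta))$ and then appeals to a slight dilation of the parameters defining $R$ (see Footnote~\ref{tparameter}) to recover the full $R_\delta$. You would need the same dilation, in $R$ and not just in $\delta$, to close the argument. Also a small remark: in $R_\delta^+$ the count is trivially zero since $P$ has no resonances in the upper half-plane, so only the lower component needs work.
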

\noindent
The point {\textit{(A)}} of the above proposition gives part  {\textit{(A)}} in Theorem \ref{int1}.
  
\begin{prop}\label{det'2}

\begin{itemize}
\item[{}]

\item[(A)] The number of resonances of {$P$} in
  {$R_{\varepsilon, \delta }$} is {$\le {\mathcal{O}}(\varepsilon \delta )h^{-n}$}.
  
 \item[(B)] There are plenty of {$z\in R_{\varepsilon, \delta }$} for which
{\begin{equation}
{\mathcal D}_P(z)-{\mathcal D}_{P_{\varepsilon ,\delta }}(z)\ge -{\mathcal{O}}(\varepsilon \delta )h^{-n}.
\end{equation}}
\end{itemize}

\end{prop}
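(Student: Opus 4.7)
The plan is to replicate the Jensen--type argument that yielded the unnumbered proposition immediately above, now with $R_{\varepsilon,\delta}$ in place of $R_\delta$, $P_{\varepsilon,\delta}$ in place of $P_\varepsilon$, and the sharpened bounds of order $\varepsilon\delta\,h^{-n}$ in place of $\mathcal{O}_\delta(1)\varepsilon^n h^{-n}$. The structural input that unlocks the improvement is that $\sigma(P_{\varepsilon,\delta})$ lies within $h^{N_0}$ of the real set $R_{\varepsilon,A,B,\delta}$, whose excluded real intervals around $A\varepsilon, B\varepsilon$ have half--width $\delta\varepsilon/2$, while the slot rectangles that $R_{\varepsilon,\delta}$ places there have half--width only $\delta\varepsilon/4$; consequently $\sigma(P_{\varepsilon,\delta})\cap R_{\varepsilon,\delta}=\emptyset$, and
\[
g(z):=\det\bigl((P-z)(P_{\varepsilon,\delta}-z)^{-1}\bigr)
\]
is holomorphic on $R_{\varepsilon,\delta}$ with zero set (counted with multiplicity) equal to $\mathrm{Res}(P)\cap R_{\varepsilon,\delta}$. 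Since $\log|g|=\mathcal{D}_P-\mathcal{D}_{P_{\varepsilon,\delta}}$, the estimates recalled just before the statement read $\log|g|\le\mathcal{O}(\varepsilon\delta)h^{-n}$ throughout $R_{\varepsilon,\delta}$ and $\log|g|\ge -\mathcal{O}(\varepsilon\delta)h^{-n}$ on its good left portion $R_{\varepsilon,\delta}\cap\{\Re z\le -\varepsilon/\mathcal{O}(1)\}$.

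For (A), I would fix $z_0$ in the good left portion with $|\Im z_0|\asymp\varepsilon$ and a disc $D(z_0,\rho)$ of radius $\rho\asymp\varepsilon$ such that $D(z_0,\rho/2)$ already contains a small enlargement of $R_{\varepsilon,\delta}$. On $D(z_0,\rho)$, $g$ is meromorphic with near--real poles at the $\mu_j\in\sigma(P_{\varepsilon,\delta})\cap D(z_0,\rho)$, and the Poisson--Jensen formula reads
\[
\sum_{z_k}\log\tfrac{\rho}{|z_k-z_0|}-\sum_{\mu_j}\log\tfrac{\rho}{|\mu_j-z_0|}=\overline{\log|g|}-\log|g(z_0)|\le\mathcal{O}(\varepsilon\delta)h^{-n},
\]
with $z_k$ running over $\mathrm{Res}(P)\cap D(z_0,\rho)$. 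Splitting the zero sum into $z_k\in R_{\varepsilon,\delta}$ and $z_k\notin R_{\varepsilon,\delta}$, the key step is to show that the pole sum cancels the out--of--region part of the zero sum up to an error of the same order $\varepsilon\delta\,h^{-n}$. This is carried out by interpreting the paired product $\prod|\mu_j-z_0|/|z_k-z_0|$ as essentially $|\det((P_{\varepsilon,\delta}-z_0)(P-z_0)^{-1})|$, which via the trace--class bound $\|P-P_{\varepsilon,\delta}\|_{\mathrm{tr}}=\mathcal{O}((\varepsilon\delta)^2 h^{-n})$ together with $\|(P_{\varepsilon,\delta}-z_0)^{-1}\|=\mathcal{O}(1/(\delta\varepsilon))$ is bounded by $\exp(\mathcal{O}(\varepsilon\delta)h^{-n})$. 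The remaining in--region sum is thus $\le\mathcal{O}(\varepsilon\delta)h^{-n}$, and since each of its terms is $\ge\log 2$, (A) follows.

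Part (B) is standard once (A) is in hand: with $g$ having at most $\mathcal{O}(\varepsilon\delta)h^{-n}$ zeros in $R_{\varepsilon,\delta}$ and $\log|g|\le\mathcal{O}(\varepsilon\delta)h^{-n}$ throughout, the Riesz decomposition $\log|g|=\sum_k\log|z-z_k|+H(z)$ with $H$ harmonic and bounded above shows that the exceptional set $\{\log|g|<-\mathcal{O}(\varepsilon\delta)h^{-n}\}$ is contained in a union of small discs about the $z_k$ whose total area is negligible compared with $|R_{\varepsilon,\delta}|\asymp(\varepsilon\delta)^2$; the complement supplies the promised ``plenty of $z$''. The main obstacle is the cancellation step in (A): making precise that the Jensen contributions of $\sigma(P_{\varepsilon,\delta})\cap D(z_0,\rho)$ and of $(\mathrm{Res}(P)\cap D(z_0,\rho))\setminus R_{\varepsilon,\delta}$ match to within $\mathcal{O}(\varepsilon\delta)h^{-n}$ requires carefully combining the trace--class and resolvent estimates established in Sections \ref{det}--\ref{ep}; the rest is a mechanical adaptation of the scheme already used for the preceding unnumbered proposition.
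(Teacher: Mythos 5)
Your plan for part (A) has a genuine gap. You propose a single Poisson--Jensen formula for $g=\det\big((P-z)(P_{\varepsilon,\delta}-z)^{-1}\big)$ on a disc $D(z_0,\rho)$ of radius $\rho\asymp\varepsilon$ enclosing all of $R_{\varepsilon,\delta}$. But the upper bound $\log|g|={\mathcal D}_P-{\mathcal D}_{P_{\varepsilon,\delta}}\le{\mathcal O}(\varepsilon\delta)h^{-n}$ of (\ref{det.33}) holds only inside $R_{\varepsilon,\delta}$; on the arc of $\partial D(z_0,\rho)$ that sits near the real axis between the two slot rectangles, one is close to $\sigma(P_{\varepsilon,\delta})$, the resolvent bound (\ref{or.11}) degenerates, and no bound of strength ${\mathcal O}(\varepsilon\delta)h^{-n}$ on $\log|g|$ is available. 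So the right-hand side of your inequality is not controlled. The cancellation step is also unsound: the product $\prod_j|\mu_j-z_0|\big/\prod_k|z_k-z_0|$ over poles and zeros \emph{inside the disc} is not $|\det((P_{\varepsilon,\delta}-z_0)(P-z_0)^{-1})|$; the determinant carries an entire non-vanishing factor and contributions from points far outside $D(z_0,\rho)$. Nor is there term-by-term matching: the resonances of $P$ near $\mathbb R$ cluster near the unmoved eigenvalues $\mu_j$ of $P^{\mathrm{int}}_\varepsilon$, while the poles of $g$ sit at the shifted $\widetilde\mu_j$ of $P_{\varepsilon,\delta}$.

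The paper's route is a decomposition of $R_{\varepsilon,\delta}$ rather than a single big Jensen disc. On $R_\delta$ nothing new is proved: (\ref{det.36}) gives $|{\mathcal D}_{P_\varepsilon}-{\mathcal D}_{P_{\varepsilon,\delta}}|\le{\mathcal O}(\varepsilon\delta)h^{-n}$ there, so the conclusions of Proposition \ref{det1} transfer to ${\mathcal D}_P-{\mathcal D}_{P_{\varepsilon,\delta}}$ with the improved remainder; the count ${\mathcal O}_\delta(1)\varepsilon^n h^{-n}$ is $\le{\mathcal O}(\varepsilon\delta)h^{-n}$ since $\varepsilon\le\varepsilon(\delta)$ and $n\ge2$. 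For the slot rectangles $\{A\varepsilon,B\varepsilon\}+\,]-\tfrac{\varepsilon\delta}{4},\tfrac{\varepsilon\delta}{4}[\,+i\,]-\varepsilon\delta,\varepsilon\delta[$, the paper reruns the chaining argument from the proof of Proposition \ref{det1} at scale $\varepsilon\delta$, with base point $z_0=A\varepsilon+2i\varepsilon\delta\in R_\delta$ (where the lower bound (\ref{det.37}) is already known); every Jensen disc used then stays inside $R_{\varepsilon,\delta}$, so the upper bound (\ref{det.33}) applies on all of them, and $g$ is holomorphic there since $\sigma(P_{\varepsilon,\delta})\cap R_{\varepsilon,\delta}=\emptyset$. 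This is what yields the ${\mathcal O}(\varepsilon\delta)h^{-n}$ count in the slots and the lower bounds of (B); your (B) sketch is in the right spirit but in the paper it is a by-product of the same chaining, not a separate Riesz-decomposition step.
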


\par\smallskip\noindent
Consider the holomorphic function {$f(z)=\det\Big((P-z)(P_\varepsilon^{\mathrm{ext}}-z)^{-1}\Big)$} on {$R$}. Then
{$$
\Big|f(z)\Big|\le \exp \left(h^{-n}\Big(\phi (z)+{\mathcal{O}}\big(\varepsilon \delta\big)\Big)
\right)\hbox{ in }R_{\varepsilon,\delta },
$$}
where {$\phi (z)=h^n{\mathcal D}_{P_{\varepsilon ,\delta }}$}. By \textit{(B)} in
Proposition \ref{det'2} we have
{$$
\Big|f(z)\Big|\ge \exp \left( h^{-n}\Big(\phi (z)-{\mathcal{O}}\big(\varepsilon \delta\big)\Big)
\right)\hbox{ at plenty of points in }R_{\varepsilon, \delta }.
$$}
We can then apply Theorem 1.1 in \cite{Sj10} (or \cite[Theorem 12.1.1]{Sj19})
  with {$h$} there replaced by {$h^n$}, to finish the proof.

\section{Escape functions}\label{esc}
\setcounter{equation}{0}

From the property (\ref{int.8}), we get after an orthogonal change of
$x$-variables and a subsequent dilation in the variable $x_n$,
$$
p(x,\xi)=\frac{\kappa }{2}(\xi _n^2-x_n^2)+\frac{1}{2}q(x',\xi ')+{\mathcal{O}}(x^3),
$$
where $\kappa$ is a positive constant. Here we write $x=(x',x_n)\in {\mathbb{R}}^n$, 
$x'=(x_1,\ldots,x_{n-1})\in {\mathbb{R}}^{n-1}$
and similarly for the dual variable $\xi $. The quadratic form $q$ is positive
definite. For simplicity, we may assume that $\kappa =1$:
\begin{equation}\label{esc.1}
p(x,\xi)=\frac{1}{2}(\xi _n^2-x_n^2)+\frac{1}{2}q(x',\xi ')+{\mathcal{O}}(|x|^3),
\end{equation}
for $x\in \mathrm{neigh\,}(0,{\mathbb{R}}^n)$, $\xi \in {\mathbb{R}}^n$. Let
\begin{equation}\label{esc.2}
p_0=\frac{1}{2}(\xi _n^2-x_n^2)\quad {\rm and}\quad G_0=x_n\xi _n.
\end{equation}
Then
\begin{equation}\label{esc.3}
H_{p_0}=(\partial _{\xi _n}p_0)\partial _{x_n}-(\partial _{x
  _n}p_0)\partial _{\xi _n}
\end{equation}
and
\begin{equation}\label{esc.4}
H_{p_0}G_0=x_n^2+\xi _n^2.
\end{equation}

\par\smallskip 
Let $\Psi \in C^\infty\big({\mathbb{R}};[0,1]\big)$ have its support 
$=]-\infty ,1]$ and  be equal to one on
$\big]-\infty ,\frac{1}{2}\big]$. 
For $\lambda \ge 1$ large enough (to be fixed below), we put
\begin{equation}\label{esc.5}
G(x,\xi )=G^\varepsilon (x,\xi )=\left(1-\Psi \left(\frac{\lambda x_n}
{\sqrt{\varepsilon +x'^2+\xi ^2}} \right) \right) G_0(x,\xi ).
\end{equation}
Here $\varepsilon >0$ is a small parameter with respect to which our
estimates will be uniform. We will consider $G$ for $(x,\xi )\in
\mathrm{neigh\,}(0,{\mathbb{R}}^{2n})$, $\xi \in {\mathbb{R}}^n$ and notice first
that $G=0$ for $x_n<0$, so we may restrict the attention to the region
$x_n\ge 0$.

\par\smallskip 
The prefactor $1-\Psi (\lambda x_n(\varepsilon +x'^2+\xi
^2)^{-\frac{1}{2}})$ in (\ref{esc.5}) is smooth and positively homogeneous of
degree 0 in the variables $\big(\sqrt{\varepsilon},\rho\big)=\big(\sqrt{\varepsilon},x,\xi\big)$. 
It follows that in $\mathrm{neigh\,}(0,{\mathbb{R}}^{2n})$
\begin{equation}\label{esc.6}
\partial _\rho ^\alpha G={\mathcal{O}}(1)(\varepsilon +\rho ^2)^{1-\frac{|\alpha|}{2}},\quad 
\alpha \in {\mathbb{N}}^{2n}. 
\end{equation}
For $|\alpha |=0,1$, we have 
\begin{equation}\label{esc.7}
G={\mathcal{O}}(1)|\rho |^2,\quad \partial _{\rho }G={\mathcal{O}}(|\rho |).
\end{equation}
Notice that the support of $G$ is contained in the region where
\begin{equation}\label{esc.7.5}
x_n^2\ge \frac{1}{4\lambda ^2}\Big(\varepsilon +(x',\xi ')^2+\xi_n^2\Big).
\end{equation}

\par\noindent 
In the cutoff region, where the prefactor in (\ref{esc.5}) is
$\ne 1$, we have
\begin{equation}\label{esc.8}
x_n^2\le \frac{1}{\lambda ^2}\Big(\varepsilon +(x',\xi ')^2+\xi _n^2\Big).
\end{equation}
Since the quadratic form $q$ is positive definite,
\begin{equation}\label{esc.9}
q(x',\xi ')\ge \frac{1}{C}(x',\xi ')^2,
\end{equation}
we get from (\ref{esc.1}), (\ref{esc.8}), that  in the cutoff region,
\begin{equation}\label{esc.10}
  p\ge \frac{1}{2}\left(\xi _n^2+\frac{1}{C}(x',\xi ')^2 \right)
  -\frac{1}{2\lambda ^2}\left(\varepsilon +(x',\xi ')^2+\xi _n^2\right)
  -{\mathcal{O}}\big(|x|^3\big).
\end{equation}

In view of (\ref{esc.8}) we can add the non-positive term
$$
\frac{1}{2}x_n^2-\frac{1}{2\lambda ^2}\big(\varepsilon +(x',\xi ')^2+\xi _n^2\big)
$$ 
to the right hand side and get in the cutoff region,
$$
p\ge \frac{1}{2}\left(x_n^2+\xi _n^2+\frac{1}{C}(x',\xi ')^2 \right)
-\frac{1}{\lambda ^2}\left( \varepsilon +(x',\xi ')^2+\xi _n^2
\right)-{\mathcal{O}}\big(|x|^3\big).
$$
Assume for simplicity that $C\ge 1$ and fix $\lambda \ge 1$ such that
\begin{equation}\label{esc.11}
\frac{1}{\lambda ^2}<\frac{1}{4C}.
\end{equation}
Absorbing the term $-{\mathcal{O}}(|x|^3)$ by restricting the attention to
a small neighborhood of $x=0$, we get in the cutoff region (that is
the one where the prefactor in (\ref{esc.5}) is $\ne 1$),
\begin{equation}\label{esc.12}
p\ge -\frac{\varepsilon }{4C}+\frac{1}{4}x_n^2+\frac{1}{4}\xi
_n^2+\frac{1}{4C}(x',\xi ')^2,
\end{equation}
hence with a new constant $C\ge 1$:
\begin{equation}\label{esc.13}
p(\rho)\ge -\frac{\varepsilon }{C}+\frac{\rho ^2}{C}.
\end{equation}

\par\smallskip 
Outside the cutoff region, we have $G=G_0$ and hence
$$
H_pG=H_pG_0=H_{p_0}G_0+{\mathcal{O}}(|x|^2)\partial_{\xi_n}G_0,
$$
so
\begin{equation}\label{esc.14}
H_pG=x_n^2+\xi_n^2+{\mathcal{O}}(|x|^2)x_n.
\end{equation}
Here, we also have $\lambda x_n\ge \big(\varepsilon+(x',\xi')^2+\xi_n^2\big)^{\frac12}$,
where $\lambda \ge 1$ is now fixed and hence we have outside the cutoff
region and inside a small neighborhood of $(0,0)$:
\begin{equation}\label{esc.15}
  H_pG\asymp \varepsilon +|\rho |^2.
\end{equation}
Notice that if 
\begin{equation}\label{esc.16}
p(\rho)<-\frac{\varepsilon}{C}+\frac{\rho ^2}{C}, 
\end{equation}
then by (\ref{esc.13}) we are outside the
cutoff region and (\ref{esc.15}) holds. 
\begin{prop}\label{esc1}
Let $\lambda >0$ be sufficiently large in the definition (\ref{esc.5}) of
$G^\varepsilon $ and let $(\rho ,\varepsilon )$ vary in
$\mathrm{neigh\,}(0,{\mathbb{R}}^{2n})\times ]0,\varepsilon _0]$
for $\varepsilon _0>0$ sufficiently small. Then there exists a
constant $C\ge 1$ such that if (\ref{esc.16}) holds, then
(\ref{esc.15}) holds uniformly for $\rho \in \mathrm{neigh\,}(0,{\mathbb{R}}^{2n})$.

\par\smallskip\noindent
If $p(\rho)<-\frac{\varepsilon}{\widetilde{C}}+\frac{\rho^2}{C}$ for some fixed $\widetilde{C}\ge C$, we reach
the same conclusion if we replace $G^\varepsilon $ by
$G^{\widetilde{\varepsilon }}$ where $\widetilde{\varepsilon}=\frac{C}{\widetilde{C}}\varepsilon$. 
\end{prop}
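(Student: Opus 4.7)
The proposition essentially summarizes the analysis carried out in the paragraphs preceding it, so my plan is to assemble the already-established ingredients into a clean argument and then bootstrap from the first claim to the second by an elementary rescaling.

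For the first assertion, I would first observe that $G^\varepsilon$ vanishes in $\{x_n<0\}$, so we may restrict to the half-space $x_n\ge 0$ where the entire discussion takes place. The key dichotomy is ``cutoff region vs.\ its complement'', where the cutoff region is where the prefactor $1-\Psi(\lambda x_n/\sqrt{\varepsilon + x'^2+\xi^2})$ in \eqref{esc.5} is $\neq 1$. The plan is: \textbf{(i)} In the cutoff region, show $p(\rho)\ge -\varepsilon/C+\rho^2/C$ by choosing $\lambda$ large enough. This is exactly the chain \eqref{esc.8}--\eqref{esc.13}: positivity of $q$ together with the inequality $x_n^2\le \lambda^{-2}(\varepsilon + (x',\xi')^2+\xi_n^2)$ in the cutoff region gives \eqref{esc.10}, and then the cheap trick of adding back the non-positive quantity $\tfrac12 x_n^2-\tfrac{1}{2\lambda^2}(\varepsilon+(x',\xi')^2+\xi_n^2)$, fixing $\lambda$ so that $1/\lambda^2<1/(4C)$, and absorbing $\mathcal{O}(|x|^3)$ into the quadratic part (by shrinking the neighborhood), yields \eqref{esc.13}. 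Thus the hypothesis \eqref{esc.16} forces $\rho$ to lie \emph{outside} the cutoff region. \textbf{(ii)} Outside the cutoff region one has $G=G_0=x_n\xi_n$, and the computation \eqref{esc.14} gives $H_pG = x_n^2+\xi_n^2+\mathcal{O}(|x|^2)x_n$. Using the defining inequality there, $\lambda x_n\ge \sqrt{\varepsilon+(x',\xi')^2+\xi_n^2}$, so $x_n^2\gtrsim \varepsilon+|\rho|^2$, and the error term $\mathcal{O}(|x|^2)x_n$ is swallowed by shrinking the neighborhood of $0$. This yields $H_pG\asymp \varepsilon+|\rho|^2$, i.e.\ \eqref{esc.15}.

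For the second assertion, the idea is to apply the first to the rescaled escape function $G^{\widetilde{\varepsilon}}$ with $\widetilde{\varepsilon}=(C/\widetilde{C})\varepsilon$. The cutoff region of $G^{\widetilde{\varepsilon}}$ is defined by the same procedure with $\varepsilon$ replaced by $\widetilde{\varepsilon}$, so in that region \eqref{esc.13} reads
\[
p(\rho)\ge -\frac{\widetilde{\varepsilon}}{C}+\frac{\rho^2}{C}=-\frac{\varepsilon}{\widetilde{C}}+\frac{\rho^2}{C}.
\]
Hence the new hypothesis $p(\rho)<-\varepsilon/\widetilde{C}+\rho^2/C$ places $\rho$ outside the cutoff region of $G^{\widetilde{\varepsilon}}$, and step \textbf{(ii)} above delivers $H_pG^{\widetilde{\varepsilon}}\asymp \widetilde{\varepsilon}+|\rho|^2$. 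The final step is to notice that, since $\widetilde{C}\ge C$ is fixed, the ratio $\widetilde{\varepsilon}/\varepsilon = C/\widetilde{C}$ is a fixed positive constant, whence $\widetilde{\varepsilon}+|\rho|^2\asymp \varepsilon+|\rho|^2$, giving the stated conclusion.

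The only point requiring any care is step \textbf{(i)}: making sure that $\lambda$ and the size of the neighborhood of $0$ can be chosen independently of $\varepsilon$. This is handled by the homogeneity of the prefactor in $(\sqrt{\varepsilon},\rho)$ (which is why all bounds in \eqref{esc.6}--\eqref{esc.7} are $\varepsilon$-uniform) and by the fact that the term $\mathcal{O}(|x|^3)$ in \eqref{esc.10} only needs to be controlled against $\rho^2/C$, something automatic once $|\rho|$ is small enough. Nothing else is delicate.
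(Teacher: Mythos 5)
Your proof is correct and follows essentially the same route as the paper: the proposition is a consolidation of \eqref{esc.8}--\eqref{esc.16}, with \eqref{esc.13} ruling out the cutoff region under hypothesis \eqref{esc.16} and \eqref{esc.14}--\eqref{esc.15} handling the complement, while the second assertion follows by applying the first with $\varepsilon$ replaced by $\widetilde{\varepsilon}=(C/\widetilde{C})\varepsilon$ and noting $\widetilde{\varepsilon}\asymp\varepsilon$. Your additional remarks on $\varepsilon$-uniformity (via the homogeneity of the prefactor in $(\sqrt{\varepsilon},\rho)$) match the paper's \eqref{esc.6}.
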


\section{Adding a bump at the saddle point}\label{bp}
\setcounter{equation}{0}

Let $\chi(x,\xi )>0$ be an analytic function on ${\mathbb{R}}^{2n}$ { with a holomorphic extension
to the domain
\begin{equation}\label{bp.1.5}
|\Im (x,\xi )|< \frac{1}{C}\big\langle \Re (x,\xi )\big\rangle\,,
\end{equation}
satisfying
\begin{equation}\label{bp.1}
  \chi(x,\xi )={\mathcal{O}}(1)\exp\left(-\frac{1}{C}\Big(\Re \big(x,\xi\big)\Big)^2\right),
\end{equation}
where} the extension is denoted by the same symbol. We use the standard notation
$\langle \rho \rangle =(1+\rho ^2)^{\frac{1}{2}}$ for real vectors $\rho $.

\par\smallskip 
For $0<\varepsilon \ll 1$, we put
\begin{equation}\label{bp.3}
\chi_\varepsilon (x,\xi )=\varepsilon\, 
\chi\Big(\frac{x}{\sqrt{\varepsilon}},
\frac{\xi}{\sqrt{\varepsilon}}\Big).
\end{equation}
Assume for simplicity that
\begin{equation}\label{bp.4}
\chi\le 1\hbox{ on the real domain}.
\end{equation}
Then
\begin{equation}\label{bp.5}
\chi_\varepsilon \le \varepsilon \hbox{ on the real domain}.
\end{equation}
For every $r_0>0$, there exists $a_0=a_0(r_0,\chi)>0$ such
that
$$
\chi\ge a_0\hbox{ on }B_{{\mathbb{R}}^{2n}}(0,r_0). $$
Since
$\chi$ is positive, we get
\begin{equation}\label{bp.6}
\chi(x,\xi )\ge a\left(1-\frac{x^2+\xi^2}{r_0^2}\right)\hbox{ on }{\mathbb{R}}^{2n},
\hbox{ for }0\le a\le a_0.
\end{equation}
Hence for the same values of $a$
\begin{equation}\label{bp.7}
\chi_\varepsilon (x,\xi )\ge a \left(\varepsilon 
- \frac{x^2+\xi^2}{r_0^2}\right),
\end{equation}
on the real domain.

\par\smallskip
Let
\begin{equation}\label{bp.8}
p_\varepsilon =p+\chi_\varepsilon .
\end{equation}
Then, by (\ref{bp.5}),
\begin{equation}\label{bp.9}
p_\varepsilon \le p+\varepsilon .
\end{equation}

Assume that
\begin{equation}\label{bp.10}
p_\varepsilon(\rho) \le b\varepsilon +c\rho ^2,
\end{equation}
where $b,c>0$ are constants to be chosen below. Using (\ref{bp.6}),
(\ref{bp.7}), we have
$$
p(\rho)=p_\varepsilon(\rho) -\chi_\varepsilon(\rho) \le b\varepsilon
+c\rho^2-a\varepsilon +\frac{a}{r_0^2}\rho ^2, 
$$
i.e.
\begin{equation}\label{bp.11}
p(\rho)\le (b-a)\varepsilon +\left( c+\frac{a}{r_0^2} \right) \rho ^2.
\end{equation}
Let $C\ge 1$ be the constant in Proposition \ref{esc1}. If for some constant
$\widetilde{C}\ge C$,
\begin{equation}\label{bp.12}
b-a\le -\frac{1}{\widetilde{C}},\qquad  c+\frac{a}{r_0^2}\le \frac{1}{C},
\end{equation}
we get
\begin{equation}\label{bp.13}
p(\rho)\le -\frac{\varepsilon }{\widetilde{C}}+\frac{\rho ^2}{C}
\end{equation}
and hence (\ref{esc.15}) holds for $G=G^{\widetilde{\varepsilon }}$
with $\widetilde{\varepsilon } =
\frac{C}{\widetilde{C}}\varepsilon $ in (\ref{esc.5}) (as we saw
in Proposition \ref{esc1})
\begin{equation}\label{bp.14}
H_pG\asymp \varepsilon +\rho ^2.
\end{equation}

For a given $r_0>0$, we know that (\ref{bp.7}) holds for $0<a\le a_0$
for some $a_0>0$. Choose $a$ so that $\frac{a}{r_0^2}\le \frac{1}{2C}$ and put
$c=\frac{1}{2C}$. Then the second inequality in (\ref{bp.12}) is
valid. Choose $b=\frac{a}{2}$ and $\widetilde{C}\ge C$ large enough. Then the
first estimate in (\ref{bp.12}) also holds. With this choice of $a$,
$b$, $c$, 
we know that (\ref{bp.10}) implies (\ref{bp.13}) and
hence also (\ref{esc.15}) ($\Longleftrightarrow$ (\ref{bp.14})), 
for $G=G^{\frac{C\varepsilon }{\widetilde{C}}}$.

After the dilation $\chi(\rho )\mapsto \chi(\alpha
\rho )$, $\alpha \in ]0,1]$, 
(\ref{bp.7}) remains valid. Hence we still have that (\ref{bp.10}) $\Longrightarrow$
(\ref{bp.14}) ($\Longleftrightarrow$ (\ref{esc.15})) (with the same fixed
dilation in $\varepsilon $).

\par\smallskip
We next study
\begin{equation*}
  \begin{aligned}
H_{p_\varepsilon }G
& =H_pG+\left\{ \varepsilon \chi\Big(\frac{x}{\sqrt{\varepsilon}},\frac{\xi}{\sqrt{\varepsilon}}\Big),G \right\}\\ 
& =H_pG+{\mathcal{O}}(1) \Big|(\nabla\chi)
\Big(\frac{x}{\sqrt{\varepsilon}},\frac{\xi}{\sqrt{\varepsilon}}\Big)\Big| \,
\sqrt{\varepsilon} \, |(x,\xi )|,
\end{aligned}
\end{equation*}
where we used (\ref{esc.7}) in the last step.
Here $\{ f,g \}=H_fg$ denotes the Poisson bracket of two $C^1$
functions $f,g$.
Thus
\begin{equation}\label{bp.16}
H_{p_\varepsilon }G=H_pG+{\mathcal{O}}(1)\big\| \nabla\chi\big\|_{L^\infty }(\varepsilon +\rho ^2).
\end{equation}
Replacing $\chi$ with $\chi(\alpha \rho )$,
gives
\begin{equation}\label{bp.17}
H_{p_\varepsilon }G=H_pG+{\mathcal{O}}(1)\alpha (\varepsilon +\rho ^2).
\end{equation}

Choose $a$, $b$, $c$ as in the preceding discussion and $\alpha >0$
small enough in
\begin{equation}\label{bp.18}
p_\varepsilon (\rho )=p(\rho )+\varepsilon \chi(\varepsilon^{-\frac{1}{2}}\alpha \rho ).
\end{equation}
Then in the region (\ref{bp.10}) we have
\begin{equation}\label{bp.19}
H_{p_\varepsilon }G\asymp \varepsilon +\rho ^2,\quad
G=G^{\widetilde{\varepsilon }},\quad \widetilde{\varepsilon
}=\frac{C}{\widetilde{C}}\varepsilon.
\end{equation}
\begin{prop}\label{bp1} Define $p_\varepsilon=p+\chi _\varepsilon  $ as
  in (\ref{bp.1}), (\ref{bp.1.5}), (\ref{bp.3}), (\ref{bp.4}). Assume
  also that $\big\| \nabla \chi \big\|\le \alpha _0$ for some sufficiently
  small $\alpha _0>0$.
Let $G^\varepsilon $ be as in Proposition (\ref{esc1})  and let $(\rho ,\varepsilon )$ vary in
$\mathrm{neigh\,}(0,{\mathbb{R}}^{2n})\times ]0,\varepsilon _0]$
for $\varepsilon _0>0$ sufficiently small. Then there exist
constants $b,c>0$ and $\widetilde{C}\ge C>0$ such that if (\ref{bp.10})
holds, then we have
(\ref{bp.13}), and
(\ref{bp.19}) holds uniformly for $\rho \in \mathrm{neigh\,}(0,{\mathbb
  R}^{2n})$. 
\end{prop}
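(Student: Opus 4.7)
The plan is to mimic the argument laid out in the paragraphs preceding the statement, organizing it into a clean sequence of parameter choices. First, I would fix $r_0 > 0$ (free) and use the positivity and smoothness of $\chi$ on the real domain to record the quadratic lower bound (\ref{bp.6}), which after rescaling yields (\ref{bp.7}). At this stage $a \in (0, a_0]$ is still free, constrained only by $a \le a_0(r_0, \chi)$.

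Assuming $p_\varepsilon(\rho) \le b\varepsilon + c\rho^2$, I substitute $p = p_\varepsilon - \chi_\varepsilon$ and use (\ref{bp.7}) to obtain
\[
p(\rho) \le (b-a)\varepsilon + \bigl(c + \tfrac{a}{r_0^2}\bigr)\rho^2.
\]
I then fix the constants in the order suggested by the preamble: take $C$ to be the constant from Proposition \ref{esc1}; choose $a \in (0, a_0]$ small enough so that $a/r_0^2 \le 1/(2C)$; set $c = 1/(2C)$, so the second inequality in (\ref{bp.12}) holds; choose $b = a/2$ and $\widetilde{C} \ge C$ large enough so that $b - a = -a/2 \le -1/\widetilde{C}$. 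This gives (\ref{bp.13}), namely $p(\rho) \le -\varepsilon/\widetilde{C} + \rho^2/C$. Applying the second part of Proposition \ref{esc1} with $\widetilde{\varepsilon} = (C/\widetilde{C})\varepsilon$ and $G = G^{\widetilde{\varepsilon}}$ then yields $H_p G \asymp \varepsilon + \rho^2$.

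The last step is to pass from $H_p G$ to $H_{p_\varepsilon} G$. I compute the Poisson bracket $\{\chi_\varepsilon, G\}$; using (\ref{esc.7}) for $G$ and the scaling of $\chi_\varepsilon$, one sees that a factor $\sqrt{\varepsilon}\,|\rho|$ comes from $G$ and a factor $\|\nabla\chi\|_\infty / \sqrt{\varepsilon}$ from $\chi_\varepsilon$, so the bracket is bounded by $O(\|\nabla\chi\|_\infty)(\varepsilon + \rho^2)$. Replacing $\chi(\cdot)$ by $\chi(\alpha\,\cdot)$ with $\alpha$ small (which is permitted, since the earlier inequalities (\ref{bp.6}), (\ref{bp.7}) only use positivity and the lower bound at $0$, and remain valid after dilation) introduces the extra factor $\alpha$ and gives (\ref{bp.17}). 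Combining with $H_p G \asymp \varepsilon + \rho^2$ and choosing $\alpha$ (equivalently $\alpha_0$) small enough absorbs the perturbation, yielding (\ref{bp.19}).

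The only genuinely delicate point, and the one I would pay most attention to in writing the proof, is the ordering of the parameter choices: $r_0$ free, then $a$ depending on $r_0$, then $c$ and $b$ depending on $a$ and $C$, then $\widetilde{C}$ depending on $b$ and $a$, and finally $\alpha$ (hence $\alpha_0$) depending on all the preceding absolute constants in order to make the bracket term a small fraction of $H_p G$. Once this bookkeeping is consistent, the dilation invariance of the lower bound (\ref{bp.7}) and the homogeneous scaling of $G^\varepsilon$ make everything fit; no new analytic input is needed beyond Proposition \ref{esc1}.
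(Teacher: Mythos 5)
Your proof follows the paper's argument step for step — the quadratic lower bound (\ref{bp.6})--(\ref{bp.7}), the substitution $p = p_\varepsilon - \chi_\varepsilon$ leading to (\ref{bp.11}), the ordered choice $r_0 \to a \to c \to b \to \widetilde{C} \to \alpha$, the appeal to the second part of Proposition \ref{esc1} with $\widetilde{\varepsilon} = (C/\widetilde{C})\varepsilon$, and the final Poisson-bracket perturbation — so it is essentially the same as the paper's proof, with no genuine gap. One small slip in the exposition: in the Poisson-bracket estimate you attribute a factor $\sqrt{\varepsilon}\,|\rho|$ to $G$ and $\|\nabla\chi\|_\infty/\sqrt{\varepsilon}$ to $\chi_\varepsilon$, whose product is $|\rho|\,\|\nabla\chi\|_\infty$, which is not $O\big(\|\nabla\chi\|_\infty(\varepsilon+\rho^2)\big)$; the correct split is $\partial_\rho G = O(|\rho|)$ from (\ref{esc.7}) and $\partial_\rho\chi_\varepsilon = O\big(\sqrt{\varepsilon}\,\|\nabla\chi\|_\infty\big)$ from the scaling in (\ref{bp.3}), so that $\{\chi_\varepsilon,G\} = O\big(\sqrt{\varepsilon}\,|\rho|\,\|\nabla\chi\|_\infty\big) \le \tfrac12\|\nabla\chi\|_\infty(\varepsilon+\rho^2)$ by AM--GM, which recovers (\ref{bp.16}).
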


\section{Deformed phase space}\label{ltg}
\setcounter{equation}{0}

\par\smallskip 
We continue to work with the function $G$ of Section \ref{esc},
where $\lambda \ge 1$ now is fixed and $\varepsilon $ is a small
parameter. $G$ vanishes near $\{ (x,\xi )\in \mathrm{neigh\,}(0,\mathbb{R}^{2n});\,
x_n{\leq} 0 \}$ and we restrict the attention to the set $x_n\ge 0$. We
saw in Proposition \ref{esc1} that there is a constant $C\ge 1$ such that
$$
\hbox{(\ref{esc.16}) }\Longrightarrow \hbox{ (\ref{esc.15})}
$$
uniformly. Also if $\widetilde{C}\ge C$, the estimate
\begin{equation}\label{ltg.7}
p(\rho )\le -\frac{\varepsilon }{\widetilde{C}}+\frac{\rho ^2}{C}
\end{equation}
implies (\ref{esc.15}) uniformly, where $G=G^{\frac{C\varepsilon}{\widetilde{C}}}$.

\par\smallskip 
For $0\le t\ll 1$, we introduce the $\mathbf{IR}$-manifold (see Appendix \ref{dil})
\begin{equation}\label{ltg.1}
\Lambda _{tG}=\big\{ \rho +itH_G(\rho );\, \rho \in
\mathrm{neigh\,}(0,{\mathbb{R}}^{2n}) \big\}.
\end{equation}
By Taylor expansion, we have
\begin{equation}\label{ltg.2}
\Im p\Big(\rho +itH_G(\rho )\Big)=-tH_pG(\rho )+{\mathcal{O}}\left(t^3|\rho |^3 \right),
\end{equation}
\begin{equation}\label{ltg.3}
\Re p\Big(\rho +itH_G(\rho )\Big)=p(\rho )+{\mathcal{O}}\left(t^2\rho ^2 \right).
\end{equation}
Here we also use that $|H_G(\rho )|=|\partial _\rho G|={\mathcal{O}}(|\rho
|)$ by (\ref{esc.7}).
\begin{prop}\label{ltg1}
  Let $\widetilde{C}\ge C>0$ be as above and let
  $G=G^{\frac{C\varepsilon}{\widetilde{C}}}$. Then if $t>0$ is small enough,
  we have that
\begin{equation}\label{ltg.4}
\Re p\big(\rho +itH_G(\rho )\big)\le -\frac{\varepsilon
}{\widetilde{C}}+\frac{\rho ^2}{2C}
\end{equation}
implies that
\begin{equation}\label{ltg.5}
\Im p\big(\rho +itH_G(\rho )\big)\asymp -t(\rho ^2+\varepsilon ).
\end{equation}
We recall that we work in $\mathrm{neigh\,}(0,{\mathbb{R}}^{2n})\cap \{
x_n\ge 0\}$.
\end{prop}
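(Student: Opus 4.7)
The strategy is to Taylor expand both $\Re p$ and $\Im p$ at the real point $\rho$, feed the hypothesis (\ref{ltg.4}) into the expansion of $\Re p$ in order to reduce to a real-variable estimate of the form (\ref{esc.16})/(\ref{ltg.7}) on $p(\rho)$, then invoke Proposition \ref{esc1} to get $H_pG(\rho)\asymp \varepsilon+\rho^2$, and finally insert this in the expansion of $\Im p$.

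First I would use (\ref{ltg.3}): under the hypothesis (\ref{ltg.4}) one gets
\[
p(\rho)\;\le\;-\frac{\varepsilon}{\widetilde C}+\frac{\rho^2}{2C}+{\mathcal O}(t^2\rho^2).
\]
Choosing $t>0$ small enough that the ${\mathcal O}(t^2\rho^2)$ contribution is bounded by $\rho^2/(2C)$, I obtain
\[
p(\rho)\;\le\;-\frac{\varepsilon}{\widetilde C}+\frac{\rho^2}{C},
\]
which is exactly the hypothesis (\ref{ltg.7}) of the second part of Proposition \ref{esc1} (with the escape function $G=G^{C\varepsilon/\widetilde C}$). A tiny subtlety here is that (\ref{esc.16}) is stated with strict inequality, so I would shave a little from $1/\widetilde C$ (replacing $\widetilde C$ by $\widetilde C-\eta$ throughout) or note that the conclusion $H_pG\asymp \varepsilon+\rho^2$ is an open/stable condition, in order to pass from the strict statement to the non-strict one.

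Proposition \ref{esc1} then yields, in our neighborhood of $0$ with $x_n\ge 0$, the two-sided bound
\[
H_pG(\rho)\;\asymp\;\varepsilon+\rho^2.
\]
Plugging this into (\ref{ltg.2}) gives
\[
\Im p\big(\rho+itH_G(\rho)\big)\;=\;-tH_pG(\rho)+{\mathcal O}(t^3|\rho|^3),
\]
where $-tH_pG(\rho)\asymp -t(\varepsilon+\rho^2)$. The remainder is controlled by
\[
t^3|\rho|^3\;=\;t\cdot t^2|\rho|\cdot\rho^2\;\le\;t^2|\rho|\cdot (\varepsilon+\rho^2),
\]
and since $(\rho,\varepsilon)\in\mathrm{neigh\,}(0,\mathbb{R}^{2n})\times\,]0,\varepsilon_0]$ and $t$ is taken small, the prefactor $t^2|\rho|$ is arbitrarily small. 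Thus the remainder is absorbed by $-tH_pG(\rho)$ and the $\asymp$ statement (\ref{ltg.5}) follows.

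The only real obstacle is bookkeeping: I must ensure the small constants $t,\varepsilon_0$ and the size of the neighborhood of $0$ are chosen in the right order so that (i) the ${\mathcal O}(t^2\rho^2)$ error in $\Re p$ does not destroy the hypothesis of Proposition \ref{esc1}, and (ii) the cubic remainder in $\Im p$ stays dominated by $t(\varepsilon+\rho^2)$. Both are achieved by first fixing $\widetilde C\ge C$ (hence the escape function $G^{C\varepsilon/\widetilde C}$), then choosing $t$ small, and finally shrinking the neighborhood of the origin; no further analytic input is needed.
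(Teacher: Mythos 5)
Your proof is correct and follows essentially the same route as the paper's: use (\ref{ltg.3}) to transfer (\ref{ltg.4}) into a real estimate on $p(\rho)$, absorb the ${\mathcal O}(t^2\rho^2)$ error to land in the hypothesis of Proposition \ref{esc1}, obtain $H_pG\asymp\varepsilon+\rho^2$, and then control the cubic remainder in (\ref{ltg.2}) by the same factorization $t^3|\rho|^3\le t^2|\rho|\cdot(\varepsilon+\rho^2)$. The remark about the strict versus non-strict inequality is a harmless extra precaution that the paper does not bother with.
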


\begin{proof}
  From (\ref{ltg.4}) we get by means of (\ref{ltg.3}),
  \begin{equation}\label{ltg.6}
p(\rho )\le -\frac{\varepsilon }{\widetilde{C}}+\left(\frac{1}{2C}+{\cal
    O}(t^2) \right) \rho ^2
\end{equation}
and hence (\ref{ltg.7}):
$$
p(\rho )\le -\frac{\varepsilon }{\widetilde{C}}+\frac{\rho ^2}{C},
$$
if $t$ is small enough. Then by (\ref{esc.15}), (\ref{ltg.2}) we get
\begin{equation}\label{ltg.8}
\begin{aligned}
    \Im p\big(\rho +itH_G(\rho )\big)&=-tH_pG(\rho )+{\mathcal{O}}(t^3|\rho |^3)\\
    &=-t\left(H_pG+{\mathcal{O}}(t^2)|\rho |^3 \right)\\
    &\asymp -t(\varepsilon +\rho ^2).
\end{aligned}
\end{equation}
\vskip-10pt
\end{proof}

\par\smallskip 
We next turn to $p_\varepsilon $ and recall Proposition \ref{bp1}.
From (\ref{bp.1}) and the Cauchy inequalities, we get after slightly
increasing the constant $C=C_\chi>0$ there:
\begin{equation}\label{ltg.9}
\partial _\rho ^\alpha \chi(\rho )={\mathcal{O}}(1)\exp
\left(-\frac{1}{C}(\Re \rho )^2\right),\qquad |\Im \rho |<\frac{1}{C}\langle \Re \rho \rangle.
\end{equation}
For $\chi_\varepsilon $ (cf.\ (\ref{bp.3})) we get
\begin{equation}\label{ltg.10}
\partial _\rho ^\alpha \chi_\varepsilon (\rho )={\mathcal{O}}(1)
\varepsilon ^{1-\frac{|\alpha |}{2}}\exp \left(-\frac{1}{C\varepsilon}(\Re \rho )^2\right),
\end{equation}
when
\begin{equation}\label{ltg.11}
|\Im \rho |\le \frac{1}{C}\Big(\sqrt{\varepsilon}+|\Re \rho |\Big).
\end{equation}
In particular, we have
\begin{equation}\label{ltg.12}
\partial _\rho ^\alpha \chi_\varepsilon (\rho )={\cal
  O}(1)\exp \left(-\frac{1}{C\varepsilon}(\Re \rho )^2 \right),\quad \hbox{when} \,\,|\alpha| \le 2.
\end{equation}
Since $H_G={\mathcal{O}}(\rho )$, $\Lambda _{tG}$ is included in the region
(\ref{ltg.11}) when $0\leq t\ll 1$ and by Taylor expansion we get
\begin{equation}\label{ltg.13}
  \begin{aligned}
-\Im p_\varepsilon\Big(\rho +itH_G(\rho )\Big)&=tH_{p_\varepsilon }G+{\mathcal{O}}
\left(1+\varepsilon ^{-\frac{1}{2}}e^{-\frac{\rho ^2}{C\varepsilon }}
\right)t^3|\rho |^3\\ &=tH_{p_\varepsilon }G+{\mathcal{O}}(1)t^3|\rho |^2.
  \end{aligned}
\end{equation}
Similarly,
\begin{equation}\label{ltg.14}
\Re p_\varepsilon\Big(\rho +itH_G(\rho )\Big)=p_\varepsilon (\rho )+{\mathcal{O}}(1)t^2\rho ^2.
\end{equation}
This is analogous to (\ref{ltg.2}), (\ref{ltg.3}) and we get
\begin{prop}\label{ltg2}
Let $b,c,C,\widetilde{C}>0$ be as in Proposition \ref{bp1}, choose $G$
as in (\ref{bp.19}). Then for $0\le t\ll 1$, if
\begin{equation}\label{ltg.15}
\Re p_\varepsilon\Big(\rho +itH_G(\rho )\Big)\le b\varepsilon +\frac{c}{2}\rho^2,
\end{equation}
we have (\ref{bp.10}):
$$
p_\varepsilon (\rho )\le b\varepsilon +c\rho ^2,
$$
and we conclude as in Proposition \ref{bp1} that (\ref{bp.13}),
(\ref{bp.19}) hold:
\begin{equation}\label{ltg.16}
p(\rho )\le -\frac{\varepsilon }{\widetilde{C}}+\frac{\rho ^2}{C},
\end{equation}
\begin{equation}\label{ltg.17}
H_{p_\varepsilon }G\asymp \varepsilon +\rho ^2.
\end{equation}
Hence by (\ref{ltg.13}),
\begin{equation}\label{ltg.18}
-\Im p_\varepsilon\Big(\rho +itH_G(\rho )\Big)\asymp t(\varepsilon +\rho ^2),
\end{equation}
when $0<t\ll 1$.
Recall here that $\rho \in \mathrm{neigh\,}(0,{\mathbb{R}}^{2n})$ with
$x_n(\rho )\ge 0$.
\end{prop}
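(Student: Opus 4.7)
The proof will follow the template set by Proposition \ref{ltg1}, but applied to $p_\varepsilon $ via the perturbation estimates from Proposition \ref{bp1}. The plan is to first transfer the hypothesis on $\Re p_\varepsilon (\rho +itH_G(\rho))$ to a bound on the real function $p_\varepsilon (\rho )$, then to invoke Proposition \ref{bp1} on the real phase space, and finally to use the Taylor expansion (\ref{ltg.13}) for the imaginary part.

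First, I would start from the Taylor expansion (\ref{ltg.14}):
$$
\Re p_\varepsilon \big(\rho +itH_G(\rho )\big)=p_\varepsilon (\rho )+{\mathcal O}(1)\,t^2\rho ^2.
$$
Combining this with the hypothesis (\ref{ltg.15}) gives
$$
p_\varepsilon (\rho )\le b\varepsilon +\Big(\frac{c}{2}+{\mathcal O}(1)t^2\Big)\rho ^2,
$$
and for $t>0$ small enough, the ${\mathcal O}(1)t^2$ term is absorbed into $c/2$, yielding $p_\varepsilon (\rho )\le b\varepsilon +c\rho ^2$, i.e.\ the real-space hypothesis (\ref{bp.10}) of Proposition \ref{bp1}. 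Applying that proposition, I immediately obtain (\ref{ltg.16}) and (\ref{ltg.17}).

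Next, for the imaginary part, I would invoke (\ref{ltg.13}):
$$
-\Im p_\varepsilon \big(\rho +itH_G(\rho )\big)=tH_{p_\varepsilon }G(\rho )+{\mathcal O}(1)\,t^3|\rho |^2.
$$
By (\ref{ltg.17}), the leading term $tH_{p_\varepsilon }G$ is comparable to $t(\varepsilon +\rho ^2)$ with positive lower and upper bounds. The remainder ${\mathcal O}(1)\,t^3|\rho |^2={\mathcal O}(t^2)\cdot t\rho ^2$ is then negligible compared to the main term once $t$ is chosen small enough, independently of $\rho $ and $\varepsilon $, giving exactly (\ref{ltg.18}).

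No step here should be a serious obstacle: the smallness thresholds on $t$ needed to absorb the ${\mathcal O}(t^2\rho ^2)$ correction in the real part and the ${\mathcal O}(t^3|\rho|^2)$ correction in the imaginary part depend only on the constants $C,\widetilde{C},c$ produced by Propositions \ref{esc1} and \ref{bp1}, and not on $(\rho ,\varepsilon )$ in the relevant neighborhood. The only point to keep in mind is that we must pick $t$ after fixing $\lambda $, $a$, $b$, $c$, $\alpha $ and $\widetilde{C}/C$, so that the dilation parameter defining $G=G^{\widetilde{\varepsilon }}$ is already frozen when the two absorptions are performed.
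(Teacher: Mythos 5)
Your proof is correct and follows exactly the route the paper takes: the paper essentially embeds the argument in the statement of Proposition \ref{ltg2}, namely passing from \eqref{ltg.15} to the real-phase-space bound \eqref{bp.10} via the Taylor expansion \eqref{ltg.14} (with absorption of the ${\mathcal O}(t^2)\rho^2$ term for small $t$), then invoking Proposition \ref{bp1} to get \eqref{ltg.16}--\eqref{ltg.17}, and finally using the expansion \eqref{ltg.13} to obtain \eqref{ltg.18}. Your remark about fixing $\lambda, a, b, c, \alpha, \widetilde{C}/C$ before choosing $t$ is a correct and helpful explicitation of the order of quantifiers implicit in the paper's statement.
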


Define $r(x)$, $R(x)$ as in (\ref{dil.8}) below:
$$
r(x)=1,\qquad R(x)=\langle x\rangle, 
$$
and put
$$
\widetilde{r}(x,\xi )=(r(x)^2+\xi ^2)^{\frac{1}{2}}.
$$
Define the class $\dot{S}(\mathbb{R}^{2n};R\,\widetilde{r})$ as in
Definition \ref{AP.2} (b) (see \cite[Chapter 1, D\'efinition 1.4]{HeSj86}) \footnote{
For our special choice of $r,R$, $\dot{S}(\mathbb{R}^{2n};R\,\widetilde{r})={S}(\mathbb{R}^{2n};R\,\widetilde{r})$, but we prefer 
$\dot{S}(\mathbb{R}^{2n};R\,\widetilde{r})$
as in the general theory, allowing for more 
general scales near infinity.}. 
From the appendix in \cite{GeSj87} we see that if 
${\Psi_0}\in C_0^\infty ({\mathbb{R}}^{2n};[0,1])$ is equal to one near $(0,0)$, then
there exists $\widetilde{G}={\Psi_0} G+F$, where $F$ is independent of
$\varepsilon $,
\begin{equation}\label{ltg.19}
F\in \dot{S}(\widetilde{r}R),\ F=0\hbox{ when }|\xi |\gg r(x),
\end{equation}
\begin{equation}\label{ltg.20}
  (0,0)\not\in \mathrm{supp\,}F,
  \end{equation}
  \begin{equation}\label{ltg.21}
H_p\widetilde{G}>0\hbox{ on }
p^{-1}(0)\cap \left(\overline{\mathscr{S}}_0\times\mathbb{R}^n\setminus \{(0,0) \}\right),
\end{equation}
and uniformly $\ge \frac{1}{\mathcal{O}(1)}$ outside any fixed neighborhood of $(0,0)$ in
{$\big(\overline{\mathscr{S}}_0\times\mathbb{R}^n\big)\setminus \{(0,0)\}$}.  
We can also arrange so that $\pi _x(\mathrm{supp\,}F)$ 
 is contained in an arbitrarily small neighborhood of $\overline{\mathscr{S}}_0$
($\pi_x$ is introduced in Footnote \ref{can.projection} {in Section
\ref{int}}).

\par\smallskip 
By Taylor expansion we see that
\begin{itemize}

\item In a small fixed neighborhood of $(0,0)$ we have
  $\widetilde{G}=G$ and the Propositions \ref{ltg1}, \ref{ltg2} hold
  with $G$ replaced by $\widetilde{G}$.
  
\item Away from any fixed neighborhood of $(0,0)$ and for  any fixed
    $t\in ]0, t_0]$ with $t_0>0$ small enough, we have
\begin{equation}\label{ltg.21.5}
\begin{cases}p(\rho )=0,\\ \pi _x(\rho )\in {\mathscr{S}}_0\end{cases} 
\Longrightarrow -\Im p\Big((\rho +itH_{\widetilde{G}}(\rho )\Big)\asymp t,
\end{equation}
uniformly in $\varepsilon $. Also, since $\Re p\Big(\rho+itH_{\widetilde{G}}(\rho )\Big)
=p(\rho )+{\mathcal{O}}(t^2)$, 
we conclude that away from any small fixed neighborhood of $(0,0)$, we have
\begin{equation}\label{ltg.22}
\frac{t}{C}\widetilde{r}^2\le \Big|p\big(\rho
+itH_{\widetilde{G}}(\rho )\big)\Big|\le C \widetilde{r}^2.
\end{equation}
   and ${{p\big(\rho +itH_{\widetilde{G}}(\rho )\big)}_\big\vert}_{\overline{\mathscr{S}}_0\times\mathbb{R}^n}$
   is an elliptic symbol of class $S(\mathbb{R}^{2n};\widetilde{r}^2)$ away from any
   fixed neighborhood of $(0,0)$. 
\end{itemize}

\section{Preparations for the study of $P_{\varepsilon}$}\label{prep}
\setcounter{equation}{0}

\par\smallskip Let
\begin{equation}\label{prep.1}
P=-h^2\Delta +V(x),
\end{equation}
so that $P$ is the $h$-Weyl quantization of the symbol 
$p(x,\xi )=\xi^2+V(x)$. Recall the definition of the symbol $\chi
_\varepsilon (x,\xi )$ in (\ref{bp.3}). By $\chi_\varepsilon $
we will also denote a suitable $h$-quantization (very close to
the Weyl-one). Let
\begin{equation}\label{prep.2}
P_\varepsilon =P+\chi_\varepsilon 
\end{equation}
be the corresponding quantization of
\begin{equation}\label{prep.2.5}
p_\varepsilon =p+\chi_\varepsilon =\xi ^2+V(x)+{\chi}_\varepsilon (x,\xi ).
\end{equation}
Assume for simplicity that
\begin{equation}\label{prep.3}
\partial _\xi \chi(x,0)=0,
\end{equation}
a condition which is fulfilled in the main case that we have in mind:
\begin{equation}\label{prep.4}
\chi(x,\xi )=\exp \left(-\frac{1}{C}(x,\xi )^2 \right).
\end{equation}
Recall that in Section \ref{bp}, we have replaced ${\chi
}(\rho )$ by $\chi(\alpha \rho )$ for some sufficiently
small fixed $\alpha >0$, in order to have Proposition \ref{bp1} available. 
With $\alpha $ small enough, we get from
(\ref{prep.3}) that
\begin{equation}\label{prep.5}
\inf_{\xi\in\mathbb{R}^n }\left( \frac{\xi ^2}{2}+
\chi_\varepsilon (x,\xi) \right)=\chi_\varepsilon (x,0).
\end{equation}
This follows from the fact that
\begin{equation}\label{prep.5.5}
\partial _\rho ^2\chi_\varepsilon ={\mathcal{O}}(\alpha ^2),\quad 
\partial_\rho  \chi_\varepsilon ={\mathcal{O}}(\sqrt{\varepsilon}\,\alpha ),\quad \chi
_\varepsilon ={\mathcal{O}}(\varepsilon ).
\end{equation}
(We could here replace $\frac{\xi ^2}{2}$ by $\theta \xi ^2$ 
for any $0<\theta<1$ if $\alpha =\alpha (\theta )$ is small enough.)

As a natural potential associated to $P_\varepsilon$, we put
\begin{equation}\label{prep.6}
V_\varepsilon (x)=V(x)+\chi_\varepsilon (x,0)=\inf_{\xi\in\mathbb{R}^n}
\left(\frac{\xi ^2}{2}+V(x)+\chi_\varepsilon (x,\xi )\right).
\end{equation}
Using (\ref{prep.5.5}),
we see that $V_\varepsilon $ is a small perturbation of $V$ in $C^2$ and
has a critical point $x_c(\varepsilon )={\mathcal{O}}\big(\sqrt{\varepsilon}\,\big)$ 
which is uniformly non-degenerate of signature $(n-1,1)$. Also,
\begin{equation}\label{prep.7}
V_\varepsilon \big(x_c(\varepsilon)\big)\asymp \varepsilon .
\end{equation}

\par\smallskip 
Assume for simplicity that
\begin{equation}\label{prep.8}
\partial \chi(0)=0.
\end{equation}
Then $x_c(\varepsilon )=0$ and
\begin{equation}\label{prep.9}
V_\varepsilon \big(x_c(\varepsilon )\big)=\varepsilon \chi(0)=:E_\varepsilon .
\end{equation}

\par\smallskip 
In analogy with (\ref{int.3}), we have
\begin{equation}\label{prep.10}
V_\varepsilon ^{-1}\Big(\big]-\infty ,E_\varepsilon \big[\Big)=
{\mathscr{U}}_\varepsilon \sqcup {\mathscr{S}}_\varepsilon ,
\end{equation}
where ${\mathscr{U}}_\varepsilon $, ${\mathscr{S}}_\varepsilon $ are open, connected and mutually
disjoint. Let ${\mathscr{U}}_\varepsilon $ be the bounded component and ${\mathscr{S}}_\varepsilon $
the unbounded one. Again,
\begin{equation}\label{prep.11}
\overline{\mathscr{U}}_\varepsilon \cap \overline{\mathscr{S}}_\varepsilon =\{ 0 \}.
\end{equation}

\par\smallskip 
In an ${\mathcal{O}}\big(\sqrt{\varepsilon}\,\big)$-neighborhood of $0$, we write
$x=\sqrt{\varepsilon} \,\widetilde{x}$ and
\begin{equation}\label{prep.12}\begin{aligned}
V_\varepsilon (x)-E_\varepsilon &=V(\sqrt{\varepsilon}\,\widetilde{x})+\varepsilon
(\chi(\alpha \widetilde{x},0)-\chi(0,0))\\
&=\varepsilon \left(\frac{V(\sqrt{\varepsilon}\,\widetilde{x})}{\varepsilon }
+\chi(\alpha \widetilde{x},0)-\chi(0,0)
\right).
\end{aligned}
\end{equation}
Thus, with $V_0(\widetilde{x})=\big\langle\frac{1}{2}V''(0)\widetilde{x}, \widetilde{x}\big\rangle$,
\begin{equation}\label{prep.13}
V_\varepsilon (x)-E_\varepsilon =\varepsilon \left( V_0(\widetilde{x})+{\cal
    O}(\sqrt{\varepsilon }+\alpha ^2)\widetilde{x}^2 \right)\hbox{ in
}C^\infty .
\end{equation}
Here, we may assume (cf.\ (\ref{esc.1})) that
\begin{equation}\label{prep.14}
V_0(\widetilde{x})=q(\widetilde{x}')-\widetilde{x}_n^2,
\end{equation}
where $q(\widetilde{x}')$ is a positive definite quadratic form and
(\ref{prep.13}) gives
\begin{equation}\label{prep.15}
V_\varepsilon (x)-E_\varepsilon =\varepsilon
\left(q(\widetilde{x}')-\widetilde{x}_n^2+{\mathcal{O}}(\sqrt{\varepsilon}\,+\alpha ^2)\widetilde{x}^2 \right).
\end{equation}

\par\smallskip  
For $-{\mathcal{O}}(\varepsilon )\le E\le E_\varepsilon $ we have
\begin{equation}\label{prep.16}
V_\varepsilon ^{-1}(]-\infty ,E[)={\mathscr{U}}_\varepsilon (E)\cup {\mathscr{S}}_\varepsilon (E),
\end{equation}
where ${\mathscr{U}}_\varepsilon (E)\subset {\mathscr{U}}_\varepsilon ={\mathscr{U}}_\varepsilon (E_\varepsilon )$,
${\mathscr{S}}_\varepsilon (E)\subset {\mathscr{S}}_\varepsilon ={\mathscr{S}}_\varepsilon (E_\varepsilon )$.

\par\smallskip 
Orient the $\widetilde{x}_n$-axis so that $\widetilde{x}_n<0$ in
${\mathscr{U}}_\varepsilon $ and $\widetilde{x}_n>0$ in ${\mathscr{S}}_\varepsilon $. Write
$E=E_\varepsilon -\varepsilon F$, $0\le F\le {\mathcal{O}}(1)$. Then on $\partial
{\mathscr{U}}_\varepsilon (E)\cup \partial {\mathscr{S}}_\varepsilon (E)$, we have by
(\ref{prep.15}),
$$
-F=-\left(1+{\mathcal{O}}(\sqrt{\varepsilon}\,+\alpha ^2)\right)\widetilde{x}_n^2+
  \left(1+{\mathcal{O}}(\sqrt{\varepsilon}\,+\alpha ^2) \right) q(\widetilde{x}'),
    $$
    \begin{equation}\label{prep.17}
      \widetilde{x}_n=
      \pm \left(1+{\mathcal{O}}(\sqrt{\varepsilon}\,+\alpha ^2)\right)
      \left(F+\left(1+{\mathcal{O}}(\sqrt{\varepsilon}\,+\alpha ^2) \right)
          q(\widetilde{x}') \right)^{\frac12}.
    \end{equation}
    Here the plus and minus sign give the local parametrizations of
    $\partial {\mathscr{S}}_\varepsilon (E)$ and $\partial {\mathscr{U}}_\varepsilon (E)$
    respectively. In the original coordinates $x=\sqrt{\varepsilon}\,\widetilde{x}$, this gives
\begin{equation}\label{prep.17.5}
  x_n=
  \pm \left(1+{\mathcal{O}}(\sqrt{\varepsilon}\,+\alpha ^2)\right)
  \left(\varepsilon F+\left(1+{\mathcal{O}}(\sqrt{\varepsilon}\,+\alpha ^2)\right) 
    q(x') \right)^{\frac12}
    \end{equation}
    which is a detailed description of $\partial {\mathscr{U}}_\varepsilon (E)$ and
    $\partial {\mathscr{S}}_\varepsilon (E)$ in any ${\mathcal{O}}(\sqrt{\varepsilon}\,)$-neighborhood
    of $0$. The two sets come closest to each other when $|x'|\ll
    \sqrt{\varepsilon }$ and the distance is
    \begin{equation}\label{prep.19}
2 \left(1+{\mathcal{O}}(\sqrt{\varepsilon}\,+\alpha ^2) \right)(\varepsilon F)^{\frac12}.
    \end{equation}
    Moreover, for $E=E_\varepsilon -\varepsilon F$,
    \begin{equation}\label{prep.20}
     \hskip-7pt \underset{\scriptstyle x\in \mathrm{neigh\,}(0)\cap {\mathscr{S}}_\varepsilon (E)}{\inf\hskip3pt x_n} \,\,
      -\,\,\underset{\scriptstyle x\in \mathrm{neigh\,}(0)\cap {\mathscr{U}}_\varepsilon (E)}{\sup\hskip3pt x_n}
      =2\left(1+{\mathcal{O}}(\sqrt{\varepsilon}\,+\alpha ^2) \right) 
      (\varepsilon F)^{\frac12}. 
    \end{equation}

\par\smallskip 
In the following, we assume that
\begin{equation}\label{prep.20.5}
E=E_\varepsilon -\varepsilon F,\qquad  \frac{1}{{\mathcal{O}}(1)}\le F\le {\mathcal{O}}(1),
\end{equation}
and we shall define two reference operators $P_\varepsilon
^{\mathrm{int}}$, $P_\varepsilon ^{\mathrm{ext}}$ by ``filling the sea''
and ``filling the well'' respectively up to a suitable level. 
Introduce the metric
\begin{equation}\label{prep.21}
\frac{dx^2}{\varepsilon +x^2}
\end{equation}
and let $d_\varepsilon $ be the corresponding
distance. When (\ref{prep.20.5}) holds, we see that
\begin{equation}\label{prep.22}
d_\varepsilon ({\mathscr{U}}_\varepsilon (E),{\mathscr{S}}_\varepsilon (E))\asymp 1.
\end{equation}
If $E<E'=E_\varepsilon -\varepsilon F'$, $\,\frac{1}{\mathcal{O}(1)}\le F'\le
{\mathcal{O}}(1)$, we have
\begin{equation}\label{prep.23}
{\mathscr{U}}_\varepsilon (E')\subset B_{d_\varepsilon }({\mathscr{U}}_\varepsilon (E),{\mathbf{r}}),\quad
{\mathbf{r}}={\mathbf{r}}(E,E',\varepsilon )>0,
\end{equation}
where ${\mathbf{r}}\longrightarrow 0^+$ when $\frac{E'-E}{\varepsilon} \longrightarrow 0$. We have the same
inclusions after replacing ${\mathscr{U}}_\varepsilon $ with ${\mathscr{S}}_\varepsilon $.

\par\smallskip
To $d_\varepsilon $ we can associate the symbol classes $S_\varepsilon (\mathbb{R}^{2n};m)$
given by Definition \ref{APepsilon.2}. Precisely, the function $a=a(x)$ independent of $\xi$ belongs to 
$S_\varepsilon (\mathbb{R}^{2n};m)$ if
\begin{equation}\label{prep.24}
\partial ^\alpha a(x)={\mathcal{O}}(1)m(x)R_\varepsilon(x)^{-|\alpha|},\quad 
\forall \alpha \in {\mathbb{N}}^n.
\end{equation}
Here $R_\varepsilon$ is given by \eqref{prep.37} and  $0<m\in C^\infty ({\mathbb{R}}^n) $ is an order function
independent of $\xi$
(see Definition \ref{APepsilon.1} (b)).

\begin{lemma}\label{prep1}
For every $E'=E_\varepsilon -\varepsilon F'$ with $F-F'\asymp 1$ small, we
can find ${\mathbf{r}}>0$, tending to $0$ when $F-F'\longrightarrow 0$, and $0\le W\in C^\infty
({\mathbb{R}}^n)$ such that
\begin{equation}\label{prep.26}
W\in S_\varepsilon (\mathbb{R}^n;r_\varepsilon^2),
\end{equation}
\begin{equation}\label{prep.27}
\mathrm{supp\,}W\subset B_{d_\varepsilon }({\mathscr{S}}_\varepsilon (E),{\mathbf{r}}),
\end{equation}
\begin{equation}\label{prep.28}
V_\varepsilon +W\ge E'+\frac{r_\varepsilon^2}{C}\,\hbox{ in }\,{\mathbb{R}}^n
\setminus B_{d_\varepsilon}({\mathscr{U}}_\varepsilon (E),{\mathbf{r}}).
\end{equation}
Here the scale function $r_\varepsilon$ is given by \eqref{prep.37}.
\end{lemma}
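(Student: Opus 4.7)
The plan is to construct $W$ by smoothly filling the sea up to the target level, using a cutoff adapted to the metric $d_\varepsilon$.

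The first step is a geometric claim: for $\mathbf{r}=\mathbf{r}(F-F')>0$ chosen with $\mathbf{r}\to 0^+$ as $F-F'\to 0^+$, every $x$ with $d_\varepsilon(x,\mathscr{U}_\varepsilon(E))\ge \mathbf{r}$ and $d_\varepsilon(x,\mathscr{S}_\varepsilon(E))\ge \mathbf{r}$ satisfies $V_\varepsilon(x)\ge E'+r_\varepsilon^2(x)/C$. Away from the saddle point $0$ this follows from $|\nabla V|\asymp 1$ on $\partial\mathscr{U}_0\setminus\{0\}$ (see (\ref{int.10})), translated through the $d_\varepsilon$-metric. Near the saddle, one uses the quadratic model (\ref{prep.15}), (\ref{prep.17.5}) together with the rescaling $x=\sqrt{\varepsilon}\,\widetilde{x}$, which turns $d_\varepsilon$ into a metric comparable to the Euclidean one in $\widetilde{x}$, to bound $V_\varepsilon-E$ from below in terms of the $d_\varepsilon$-distance to the sub-$E$ level set.

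Next, I pick a cutoff $\zeta\in S_\varepsilon(\mathbb{R}^n;1)$ adapted to $d_\varepsilon$, with $0\le \zeta\le 1$, $\zeta=1$ on $B_{d_\varepsilon}(\mathscr{S}_\varepsilon(E),\mathbf{r}/3)$ and $\mathrm{supp}\,\zeta\subset B_{d_\varepsilon}(\mathscr{S}_\varepsilon(E),2\mathbf{r}/3)$, obtained by composing a smoothed $d_\varepsilon$-distance to $\mathscr{S}_\varepsilon(E)$ with a one-variable cutoff. I then define
\[
W(x):=\zeta(x)\,r_\varepsilon^2(x)\,\varphi\!\left(\frac{E'-V_\varepsilon(x)+\tfrac{2}{C}r_\varepsilon^2(x)}{r_\varepsilon^2(x)}\right),
\]
where $\varphi\in C^\infty(\mathbb{R};[0,\infty))$ is a fixed non-decreasing function with $\varphi(s)=s-1/C$ for $s\ge 2/C$ and $\varphi(s)=0$ for $s\le 1/C$. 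The verification is then routine: $W\ge 0$ and $\mathrm{supp}\,W\subset B_{d_\varepsilon}(\mathscr{S}_\varepsilon(E),\mathbf{r})$, giving (\ref{prep.27}); where $\zeta=1$, the property of $\varphi$ gives $V_\varepsilon+W\ge E'+r_\varepsilon^2/C$; where $\zeta=0$, the geometric step gives the same; in the transition annulus, monotonicity of $\varphi$ combined with the geometric step closes the bound. The symbol estimate (\ref{prep.26}) follows from $V_\varepsilon\in S_\varepsilon(\mathbb{R}^n;r_\varepsilon^2)$, $r_\varepsilon^2\in S_\varepsilon(\mathbb{R}^n;r_\varepsilon^2)$, and $\zeta\in S_\varepsilon(\mathbb{R}^n;1)$, via the chain rule applied to $\varphi$ of a symbol of order $1$.

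The main obstacle is the geometric step. The quadratic structure of $V_\varepsilon$ near $0$ forces the barrier between $\mathscr{U}_\varepsilon(E)$ and $\mathscr{S}_\varepsilon(E)$ to have width of order $\sqrt{F}$ in $\widetilde{x}$, so $\mathbf{r}$ has to be taken comparable to $\sqrt{F-F'}$ (times a large constant) in order that the small-support and lower-bound requirements are simultaneously satisfied. Away from $0$, (\ref{int.10}) makes the argument routine.
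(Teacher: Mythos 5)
The paper's own proof of Lemma~\ref{prep1} is a reference to the partition-of-unity construction of \cite[Remarque 1.3]{HeSj86}, and your proposal is a concrete realization of exactly that construction rather than a different route.

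There is, however, a real defect in the plateau function $\varphi$. Where $\zeta=1$, your formula gives
\[
V_\varepsilon+W=E'+\Big(\tfrac{2}{C}-s+\varphi(s)\Big)r_\varepsilon^2,
\qquad
s=\frac{E'-V_\varepsilon+\tfrac{2}{C}r_\varepsilon^2}{r_\varepsilon^2},
\]
so \eqref{prep.28} on $\{\zeta=1\}$ is equivalent to $\varphi(s)\ge s-1/C$. No $C^\infty$ function can simultaneously vanish identically on $(-\infty,1/C]$ and satisfy $\varphi(s)\ge s-1/C$ for $s>1/C$: all derivatives of such a $\varphi$ vanish at $1/C$, so $\varphi(s)=o(s-1/C)$ as $s\to(1/C)^+$, and the coefficient $2/C-s+\varphi(s)$ dips below $1/C$ and, depending on the interpolation, can come arbitrarily close to $0$. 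The set $\{\zeta=1,\ E'<V_\varepsilon<E'+r_\varepsilon^2/C\}\setminus B_{d_\varepsilon}(\mathscr{U}_\varepsilon(E),\mathbf{r})$ on which this matters is non-empty: away from the saddle the level sets $\{V_\varepsilon=E\}$ and $\{V_\varepsilon=E'\}$ are only $\mathcal{O}(\varepsilon)$ apart in Euclidean distance, whereas $\mathbf{r}$ is independent of $\varepsilon$, so for small $\varepsilon$ the region $\{\zeta=1\}$ reaches well past the level $E'$. The cure is elementary --- take $\varphi$ smooth, non-decreasing, $\varphi\equiv 0$ for $s\le 0$, $\varphi(s)=s$ for $s\ge 1/C$, and $\varphi(s)\ge s$ for all $s$ (e.g.\ a symmetric mollification of $s_+$), with the argument $s=(E'-V_\varepsilon+r_\varepsilon^2/C)/r_\varepsilon^2$ --- but as written the verification fails. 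Two smaller inaccuracies: $\mathbf{r}$ should scale like $F-F'$, not $\sqrt{F-F'}$, since by \eqref{prep.17} the level sets $\{V_\varepsilon=E\}$ and $\{V_\varepsilon=E'\}$ sit at $|\widetilde x_n|\approx\sqrt F$ and $\sqrt{F'}$ near the pinch, a $d_\varepsilon$-distance $\asymp\sqrt F-\sqrt{F'}\asymp F-F'$ apart because $F\asymp1$; and the constant $C$ in \eqref{prep.28} necessarily depends on $\mathbf{r}$ --- at $d_\varepsilon$-distance $\mathbf{r}$ from $\partial\mathscr{U}_\varepsilon(E)$ with $|x|\asymp1$ one has $V_\varepsilon-E'\asymp\mathbf{r}$ while $r_\varepsilon^2\asymp1$, forcing $C$ to be at least of order $1/\mathbf{r}$ --- which the lemma permits but should be made explicit.
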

\begin{proof} This can be done in quite a standard way,
  using partitions of unity, adapted to the metric. See e.g.\
  \cite[Remarque 1.3, p.\ 9]{HeSj86}.
\end{proof}

\par\smallskip  
Put
\begin{equation}\label{prep.29}
P_\varepsilon ^{\mathrm{int}}=P_\varepsilon +W, \quad V_\varepsilon^{\mathrm{int}}=V_\varepsilon+W.
\end{equation}

We next turn to the definition of $P_\varepsilon ^{\mathrm{ext}}$ by
means of ``filling the well''. For technical reasons, we want the
perturbation to be of trace class.

\par\smallskip  
The inequality $e^{-t}+t\ge 1$ for $t\ge 0$ implies that
\begin{equation}\label{prep.30}
\beta e^{-\frac{\xi ^2}{2\beta}}+\frac{\xi^2}{2}\ge \beta 
\end{equation}
for every $\beta >0$.
\begin{lemma}\label{prep2}
For every $E'=E_\varepsilon -\varepsilon F'$ with $F-F'\asymp 1$ small, we
can find ${\mathbf{r}}>0$, tending to $0$ when $F-F'\longrightarrow 0$, and $0\le \beta \in C_0^\infty
({\mathbb{R}}^n)$ such that
\begin{equation}\label{prep.31}
\beta \in S_\varepsilon (\mathbb{R}^{2n}; r_\varepsilon^2),
\end{equation}
\begin{equation}\label{prep.32}
\mathrm{supp\,}\beta \subset B_{d_\varepsilon }({\mathscr{U}}_\varepsilon (E),{\mathbf{r}}),
\end{equation}
\begin{equation}\label{prep.33}
V_\varepsilon +\beta \ge E'\hbox{ in }{\mathbb{R}}^n\setminus B_{d_\varepsilon
}({\mathscr{S}}_\varepsilon (E),{\mathbf{r}}).
\end{equation}
\end{lemma}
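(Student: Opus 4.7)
The lemma is the mirror image of Lemma \ref{prep1}, with the roles of well and sea exchanged; the one genuinely new feature is that $\beta$ must be compactly supported, since later on $P_\varepsilon^{\mathrm{ext}}-P_\varepsilon$ has to be trace class. The plan is to build $\beta$ by the same partition-of-unity procedure adapted to the metric \eqref{prep.21} that underlies Lemma \ref{prep1} and \cite[Remarque 1.3]{HeSj86}.

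Fix $\mathbf{r}>0$ small enough so that, by \eqref{prep.22}, the $d_\varepsilon$-ball of radius $2\mathbf{r}$ around $\overline{\mathscr{U}}_\varepsilon(E)$ is disjoint from $\mathscr{S}_\varepsilon(E)$. Since $\overline{\mathscr{U}}_\varepsilon$ is bounded, this ball is contained in a fixed bounded subset of $\mathbb{R}^n$, which will automatically give the compact support of $\beta$. Cover it by a locally finite family of $d_\varepsilon$-balls $B_j$ of bounded $d_\varepsilon$-radius, centered at points $x_j$, with a subordinate partition of unity $\{\varphi_j\}$ satisfying $\partial^\alpha \varphi_j = \mathcal{O}(R_\varepsilon^{-|\alpha|})$, i.e.\ $\varphi_j\in S_\varepsilon(\mathbb{R}^n;1)$. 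On each $B_j$ meeting $\mathscr{U}_\varepsilon(E)$ I would pick a nonnegative constant $b_j$ with $V_\varepsilon+b_j\ge E'$ on $B_j$ and $b_j = \mathcal{O}(r_\varepsilon(x_j)^2)$; on the remaining balls set $b_j=0$. Then $\beta=\sum_j b_j\varphi_j\in C_0^\infty(\mathbb{R}^n)$ should satisfy \eqref{prep.31}--\eqref{prep.33}, using the slow variation of $r_\varepsilon$ in the metric $d_\varepsilon$ to pass from pointwise bounds to the symbol estimates.

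The only quantitative point, and the main obstacle, is the claim that $E'-V_\varepsilon=\mathcal{O}(r_\varepsilon^2)$ on $\mathscr{U}_\varepsilon(E)$, uniformly in $\varepsilon$, which is what permits the choice of $b_j$ above. Near the origin this follows from the quadratic model \eqref{prep.15}: on $\mathscr{U}_\varepsilon(E)$ one has $\widetilde{x}_n^2+q(\widetilde{x}')\lesssim 1$, i.e.\ $x_n^2+q(x')\lesssim \varepsilon$, whence $E'-V_\varepsilon=\mathcal{O}(\varepsilon)=\mathcal{O}(r_\varepsilon^2)$. Away from the origin, $r_\varepsilon\asymp 1$ while $V_\varepsilon$ is uniformly bounded on the bounded set $\overline{\mathscr{U}}_\varepsilon$, so $E'-V_\varepsilon=\mathcal{O}(1)=\mathcal{O}(r_\varepsilon^2)$. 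This is really the only place where the geometry of the well enters; everything else is routine partition-of-unity bookkeeping, essentially identical to the proof of Lemma \ref{prep1}, to which I would refer the reader.
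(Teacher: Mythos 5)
Your approach --- a partition of unity adapted to the metric $d_\varepsilon$, driven by the quantitative observation that $(E'-V_\varepsilon)_+=\mathcal{O}(r_\varepsilon^2)$ on (a small $d_\varepsilon$-neighborhood of) $\mathscr{U}_\varepsilon(E)$ --- is exactly what the paper has in mind; its ``proof'' is only a pointer to the standard construction in \cite[Remarque 1.3]{HeSj86}, so you are supplying the argument the paper leaves implicit, and your framing of the key bound as the one genuine point to check is correct.

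There is, however, a small gap in your verification of that bound. Your dichotomy ``near the origin'' (meaning $|x|\lesssim\sqrt{\varepsilon}$, so that $\widetilde{x}=\mathcal{O}(1)$ and $E'-V_\varepsilon=\mathcal{O}(\varepsilon)$) versus ``away from the origin'' (meaning $r_\varepsilon\asymp 1$, i.e.\ $|x|\gtrsim 1$) omits the intermediate scale $\sqrt{\varepsilon}\lesssim|x|\lesssim 1$. In that range $\widetilde{x}$ is large, so the inequality $\widetilde{x}_n^2+q(\widetilde{x}')\lesssim 1$ you invoke is false, and $E'-V_\varepsilon$ is not $\mathcal{O}(\varepsilon)$. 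What is true there is $E'-V_\varepsilon=\mathcal{O}(\varepsilon)+\mathcal{O}(x^2)$, since $E'=\mathcal{O}(\varepsilon)$ and $V_\varepsilon(x)=q(x')-x_n^2+\mathcal{O}(|x|^3)=\mathcal{O}(x^2)$ near $0$ (with $\chi_\varepsilon(x,0)$ negligible for $|x|\gg\sqrt{\varepsilon}$). This is still $\mathcal{O}(r_\varepsilon^2)$ because $r_\varepsilon^2\asymp\varepsilon+x^2$ for bounded $x$, so the conclusion survives; you should just treat this middle regime explicitly rather than subsume it into the $\widetilde{x}=\mathcal{O}(1)$ case.
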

\begin{proof}
This can be done in quite a standard way,
  using partitions of unity, adapted to the metric. See e.g.\
  \cite[Remarque 1.3, p.\ 9]{HeSj86}.
\end{proof}

\par\smallskip  
From (\ref{prep.30}), (\ref{prep.33}) we see that
\begin{equation}\label{prep.34}
p_\varepsilon (x,\xi )+\beta (x)e^{-\frac{\xi^2}{2\beta (x)}}\ge \frac{\xi^2}{2}
+V_\varepsilon (x)+\beta (x)\ge E'\end{equation}
in ${\mathbb{R}}^n\setminus B_{d_\varepsilon }({\mathscr{S}}_\varepsilon (E),{\mathbf{r}})$.

\par\smallskip  
We can arrange so that
\begin{equation}\label{prep.35}
\beta \ge \frac{r_\varepsilon^2}{{\mathcal{O}}(1)}\hbox{ in }B_{d_\varepsilon}
\left({\mathscr{U}}_\varepsilon (E),\frac{3{\mathbf{r}}}{4}\right)
\end{equation}
and
\begin{equation}\label{prep.36}
V_\varepsilon \ge E' \hbox{ in }B_{d_\varepsilon }({\mathscr{U}}_\varepsilon(E),{\mathbf{r}})
\setminus B_{d_\varepsilon }({\mathscr{U}}_\varepsilon (E),\frac{{\mathbf{r}}}{2}).
\end{equation}

\par\smallskip  
Let $\chi _{{\mathscr{U}}_\varepsilon }\in C_0^\infty 
\left(B_{d_\varepsilon}({\mathscr{U}}_\varepsilon (E),
\frac{3{\mathbf{r}}}{4});[0,1] \right)$ be of class
${S}_\varepsilon (\mathbb{R}^n;1)$ and equal to one on 
$B_{d_\varepsilon }\left( {\mathscr{U}}_\varepsilon (E),\frac{{\mathbf{r}}}{2} \right)$.

\begin{prop}\label{prep3}
  We have
$$
\beta \exp\Big({-\frac{\xi ^2}{2\beta}}\Big)\in S_\varepsilon\left (B_{d_\varepsilon }
\Big({\mathscr{U}}_\varepsilon(E),\frac{3{\mathbf{r}}}{4}\Big)\times {\mathbb{R}}^n;r_\varepsilon ^2\right).
$$
Here $S_\varepsilon(\bullet;m)$ is given in Definition \ref{APepsilon.2} (c).
\end{prop}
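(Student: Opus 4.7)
The plan is to exploit two features that combine favorably on $B_{d_\varepsilon}(\mathscr{U}_\varepsilon(E), 3\mathbf{r}/4)$: by \eqref{prep.31} combined with \eqref{prep.35} we have $\beta \asymp r_\varepsilon^2$ uniformly there, and the exponential factor provides Gaussian decay in the rescaled momentum variable $\eta := \xi/\sqrt{\beta(x)}$. Writing
\[
f(x,\xi) := \beta(x)\, e^{-\xi^2/(2\beta(x))} = \beta(x)\, e^{-\eta^2/2},
\]
all derivatives will ultimately be finite sums of polynomials in $\eta$ times $e^{-\eta^2/2}$, hence uniformly bounded in $\eta$, multiplied by an appropriate power of $\beta$ and derivatives of $\beta$.

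First, I would establish by induction on $|\alpha|+|\gamma|$ that
\[
\partial_x^\alpha \partial_\xi^\gamma f(x,\xi) = \Bigl(\sum_{\text{finite}} c_{\alpha,\gamma,\kappa,s,\underline{\alpha}}\,\eta^\kappa\, \beta(x)^{1-|\gamma|/2 - s}\prod_{j=1}^s \partial_x^{\alpha^{(j)}}\beta(x)\Bigr)\, e^{-\eta^2/2},
\]
with $|\alpha^{(j)}|\ge 1$, $\sum_j \alpha^{(j)}=\alpha$, and $|\kappa|\le |\gamma|+2s$. This is a routine Fa\`a di Bruno / Leibniz calculation starting from $\partial_\xi f = -\sqrt{\beta}\,\eta\, e^{-\eta^2/2}$ and $\partial_x f = (\partial_x\beta)(1+\eta^2/2)\, e^{-\eta^2/2}$.

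Second, I would bound each such term on $B_{d_\varepsilon}(\mathscr{U}_\varepsilon(E),3\mathbf{r}/4)$. Since $\beta\in S_\varepsilon(r_\varepsilon^2)$ we have $|\partial_x^{\alpha^{(j)}}\beta|\le C\, r_\varepsilon^2\, R_\varepsilon^{-|\alpha^{(j)}|}$, and $\beta \asymp r_\varepsilon^2$ on the set, so
\[
\beta^{1-|\gamma|/2-s}\prod_j \partial_x^{\alpha^{(j)}}\beta = \mathcal{O}(1)\, r_\varepsilon^{2-|\gamma|}\, R_\varepsilon^{-|\alpha|}\,,
\]
uniformly in $\varepsilon$. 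The polynomial $\eta^\kappa$ is absorbed by $e^{-\eta^2/2}$, producing a dimensionless bounded factor. Combining, we obtain
\[
\bigl|\partial_x^\alpha \partial_\xi^\gamma f(x,\xi)\bigr|\le C_{\alpha,\gamma}\, r_\varepsilon(x)^{2-|\gamma|}\, R_\varepsilon(x)^{-|\alpha|}\, e^{-\eta^2/2}.
\]

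Third, I would match this against the membership requirement for $S_\varepsilon(\mathbb{R}^{2n};r_\varepsilon^2)$, where the $\xi$-derivative cost is the scale $\widetilde r_\varepsilon=(r_\varepsilon^2+\xi^2)^{1/2}$ used throughout the paper. When $|\xi|\lesssim r_\varepsilon$ we have $\widetilde r_\varepsilon \asymp r_\varepsilon$ and the target bound $r_\varepsilon^2 R_\varepsilon^{-|\alpha|}\widetilde r_\varepsilon^{-|\gamma|}$ follows immediately. When $|\xi|\gg r_\varepsilon$, i.e.\ $|\eta|\gg 1$, the Gaussian factor $e^{-\eta^2/2}$ dominates any power of $|\xi|/r_\varepsilon$ and the estimate again holds (in fact with arbitrary additional decay). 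The main technical point to be careful with is uniformity in $\varepsilon$ of all constants, but this is guaranteed because the lower bound $\beta\ge r_\varepsilon^2/\mathcal{O}(1)$ from \eqref{prep.35} and the symbol estimates on $\beta$ from Lemma \ref{prep2} are themselves uniform in $\varepsilon$ by construction.
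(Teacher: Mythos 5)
Your proof is correct. The paper proceeds differently: it factors
\[
e^{-\xi^2/(2\beta)}=e^{-\widetilde r_\varepsilon^2/(2\beta)}\,e^{r_\varepsilon^2/(2\beta)},
\]
observes that $e^{r_\varepsilon^2/(2\beta)}\in S_\varepsilon(1)$ because $r_\varepsilon^2/(2\beta)\asymp 1$ is an $x$-only symbol, and shows $e^{-\widetilde r_\varepsilon^2/(2\beta)}\in S_\varepsilon(1)$ by noting that $\widetilde r_\varepsilon^2/(2\beta)$ is a positive elliptic element of $S_\varepsilon\big((\widetilde r_\varepsilon/r_\varepsilon)^2\big)$, then applying Fa\`a di Bruno and absorbing the resulting polynomial factors $(\widetilde r_\varepsilon^2/\beta)^k$ into the decaying exponential. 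You instead change variables to $\eta=\xi/\sqrt{\beta}$, compute all derivatives of $\beta e^{-\eta^2/2}$ explicitly as sums of monomials $\eta^\kappa \beta^{1-|\gamma|/2-s}\prod_j\partial_x^{\alpha^{(j)}}\beta\cdot e^{-\eta^2/2}$, bound each using $\beta\asymp r_\varepsilon^2$ and $\beta\in S_\varepsilon(r_\varepsilon^2)$, and then compare with the target scale $\widetilde r_\varepsilon^{-|\gamma|}$ by observing that $(\widetilde r_\varepsilon/r_\varepsilon)^2\lesssim 1+\eta^2$ and $(1+\eta^2)^{N}e^{-\eta^2/2}$ is bounded. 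The two routes lean on the same two ingredients ($\beta\asymp r_\varepsilon^2$ on the relevant set and Gaussian domination of polynomials); the paper's factorization keeps everything organized in terms of the canonical scales $r_\varepsilon,\widetilde r_\varepsilon$ and uses the symbol calculus to dispose of the composition with $\exp$ almost for free, while your version is more bare-hands but gives a fully explicit pointwise bound and, as you note, even yields extra decay for $|\xi|\gg r_\varepsilon$ — which is precisely what the paper exploits in the strengthened estimate \eqref{prep.42} that follows the proposition.
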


\begin{proof}
We write
\begin{equation}\label{prep.39}
\exp\Big({-\frac{\xi^2}{2\beta (x)}}\Big)=\exp\Big({-\frac{\widetilde{r}_\varepsilon ^2}
{2\beta}}\Big)\exp\Big({\frac{r_\varepsilon ^2}{2\beta}}\Big).
\end{equation}
Here $\frac{r_\varepsilon ^2}{2\beta}\asymp 1$ so $\exp\big({\frac{r_\varepsilon ^2}
{2\beta}}\big)\in {S}_\varepsilon (1)$. This factor does not depend on $\xi $. As for
the first factor in (\ref{prep.39}), we notice that
\begin{equation}\label{prep.40}
0<\frac{\widetilde{r}_\varepsilon ^2}{2\beta }\in {S}_\varepsilon
\left(\left(\frac{\widetilde{r}_\varepsilon }{r_\varepsilon } \right)^2 \right)
\end{equation}
is elliptic. For $\alpha ,\gamma \in {\mathbb{N}}^n$, $\partial _x^\alpha
\partial _\xi ^\gamma\Big(\exp\big({-\frac{\widetilde{r}_\varepsilon ^2}{2\beta}}\big)\Big)$ is a
finite linear combination of terms
\begin{equation}\label{prep.41}
\left(\partial _x^{\alpha _1}\partial _\xi ^{\gamma
  _1}\Big(\frac{\widetilde{r}_\varepsilon ^2}{2\beta}\Big)\ldots
\partial _x^{\alpha _k}\partial _\xi ^{\gamma_k}\Big(\frac{\widetilde{r}_\varepsilon ^2}{2\beta}\Big)\right)
\exp\Big({-\frac{\widetilde{r}_\varepsilon^2}{2\beta}}\Big)
\end{equation}
with $\alpha _1+...+\alpha _k=\alpha $, $\gamma _1+...+\gamma _k=\gamma$, 
$(\alpha _k,\gamma _k)\ne (0,0)$. This term is
$$
={\mathcal{O}}(1)\left(\frac{\widetilde{r}_\varepsilon ^2}{\beta }\right)^k
\exp\Big({-\frac{\widetilde{r}_\varepsilon^2}{2\beta}}\Big)
\widetilde{r}_\varepsilon^{-|\gamma |}R_\varepsilon ^{-|\alpha |}
={\mathcal{O}}(1) \widetilde{r}_\varepsilon^{-|\gamma |}R_\varepsilon^{-|\alpha |},
$$
so $\exp\Big({-\frac{\widetilde{r}_\varepsilon^2}{2\beta}}\Big)\in {S}_\varepsilon (1)$ 
and the proposition follows.
\end{proof}

\par\smallskip  
Using that $\exp\Big({-\frac{\widetilde{r}_\varepsilon^2}{2\beta}}\Big)={\mathcal O}
\left(\Big(\frac{\widetilde{r}_\varepsilon}{R_\varepsilon}\Big)^{-N}\right)$ for every $N\ge 0$, we
can strengthen the conclusion in the proposition to
\begin{equation}\label{prep.42}
\beta \exp\Big({-\frac{\xi ^2}{2\beta}}\Big)\in {S}_\varepsilon \left(B_{d_\varepsilon }
\Big({\mathscr{U}}_\varepsilon (E),\frac{3{\mathbf{r}}}{4}\Big)\times {\mathbb{R}}^n;\;
r_\varepsilon ^2 \Big(\frac{r_\varepsilon}{\widetilde{r}_\varepsilon}\Big)^N\right),
\end{equation}
for every $N\ge 0$. Using also the properties of $\chi_{{\mathscr{U}}_\varepsilon}$, we get 
\begin{equation}\label{prep.43}
  \chi _{{\mathscr{U}}_\varepsilon }^2\beta \exp\Big({-\frac{\xi ^2}{2\beta}}\Big)\in
  {S}_\varepsilon \left({\mathbb{R}}^{2n}; \, r_\varepsilon ^2
  \Big(\frac{r_\varepsilon}{\widetilde{r}_\varepsilon}\Big)^N\right),
  \qquad \forall N\ge 0.
\end{equation}
Here $R_\varepsilon \asymp r_\varepsilon $ over
$\mathrm{supp\,}\chi _{{\mathscr{U}}_\varepsilon }$. We define
\begin{equation}\label{prep.44}
p_\varepsilon ^{\mathrm{ext}}(x,\xi )=p_\varepsilon (x,\xi )+\chi
_{{\mathscr{U}}_\varepsilon }(x)^2\beta (x)\exp\Big({-\frac{\xi ^2}{2\beta (x)}}\Big).
\end{equation}

By direct checking,
\begin{equation}\label{prep.45}
p_\varepsilon \in {S}_\varepsilon (\Lambda_G;\widetilde{r}_\varepsilon ^2),
\end{equation}
and by using also (\ref{prep.43}),
\begin{equation}\label{prep.46}
p_\varepsilon ^{\mathrm{ext}}\in {S}_\varepsilon (\Lambda_G;\widetilde{r}_\varepsilon ^2).
\end{equation}

\section{Study of $P_\varepsilon^{\mathrm{int}}$}\label{pint}
\setcounter{equation}{0}

Recall the definition of $P_\varepsilon ^{\mathrm{int}}$ in
(\ref{prep.29}) and Lemma \ref{prep1}, where $E$ is chosen as in
(\ref{prep.20.5}). We make the assumption (\ref{dil.22}):
$$
\varepsilon \ge h^{\frac{1}{2}-\alpha _0} \hbox{ for some fixed }\alpha _0>0.
$$
With $E'<E$, $\frac{\varepsilon}{{\mathcal{O}}(1)}\le E-E'\ll \varepsilon $ as in Lemma
\ref{prep1}, we know that the self-adjoint operator $P_\varepsilon
^{\mathrm{int}}$ has purely discrete spectrum in $]-\infty ,E'[$.

\par\smallskip 
Recall here that $0$ is a non-degenerate saddle point for
$V_\varepsilon $, with critical value $E_\varepsilon =\varepsilon \chi (0,0)$
(cf.\ (\ref{prep.9})). From  (\ref{prep.6}) we see that $(0,0)$ is a
non-degenerate saddle point of $p_\varepsilon (x,\xi ) =p(x,\xi )+\chi _\varepsilon
(x,\xi )$ with the same critical value $E_\varepsilon $. The discussion
of wells and seas in Section \ref{prep} can be lifted in a straight
forward way from ${\mathbb{R}}^n$ to $T^*{\mathbb{R}}^n$.
For $-{\mathcal{O}}(\varepsilon )\le \widetilde{E}\le E_\varepsilon $ 
we have (cf.\ (\ref{prep.16}) that
\begin{equation}\label{pint.1}
p_\varepsilon ^{-1}(]-\infty ,\widetilde{E}[)={\widehat{\mathscr{U}\,}}{{\hskip-3pt}_{\varepsilon}}(\widetilde{E}) \cup
\widehat{\mathscr{S}\,}{{\hskip-3pt}_{\varepsilon}} (\widetilde{E}),
\end{equation}
where ${\widehat{\mathscr{U}\,}}{{\hskip-3pt}_{\varepsilon}} (\widetilde{E})$, 
$\widehat{\mathscr{S}\,}{{\hskip-3pt}_{\varepsilon}}(\widetilde{E})$ are open,
${\widehat{\mathscr{U}\,}}{{\hskip-3pt}_{\varepsilon}} (\widetilde{E})$ is bounded and (cf.\ (\ref{prep.16}))
\begin{equation}\label{pint.'2}
  \pi _x\left({\widehat{\mathscr{U}\,}}{{\hskip-3pt}_{\varepsilon}} (\widetilde{E}) \right)={\mathscr{U}}_\varepsilon (\widetilde{E}),
  \quad
  \pi _x\left(\widehat{\mathscr{S}\,}{{\hskip-3pt}_{\varepsilon}}(\widetilde{E}) \right)={\mathscr{S}}_\varepsilon (\widetilde{E}).
\end{equation}
Here $\pi _x:T^*{\mathbb{R}}^n\longrightarrow {\mathbb{R}}^n$ is the standard base space
projection. We write
$$
{\widehat{\mathscr{U}\,}}{{\hskip-3pt}_{\varepsilon}} ={\widehat{\mathscr{U}\,}}{{\hskip-3pt}_{\varepsilon}} (E_\varepsilon ),
\quad
\widehat{\mathscr{S}\,}{{\hskip-3pt}_{\varepsilon}}=\widehat{\mathscr{S}\,}{{\hskip-3pt}_{\varepsilon}}(E_\varepsilon ).
$$
When $E'$ comes close to $E$, we know by Lemma \ref{prep1} that
$\mathrm{supp\,}W$ is close to ${\mathscr{S}}_\varepsilon (E)$ and in particular
disjoint from $\pi _x\overline{\widehat{\mathscr{U}\,}}{{\hskip-3pt}_{\varepsilon}}
=\overline{\mathscr{U}}_\varepsilon  $. (Cf.\ (\ref{prep.10}).)

The eigenvalues of $P_\varepsilon ^{\mathrm{int}}$ distribute according
to the semi-classical Weyl law:
\begin{prop}\label{pint1'} For every fixed $0<\varepsilon \ll 1$
  and $-{\mathcal{O}}(\varepsilon )\le a < b\le E$, we have
  \begin{equation}\label{pint.3}
\hskip-5pt\# \left(\sigma (P_\varepsilon ^{\mathrm{int}})\cap [a,b] \right)=
    \frac{1}{(2\pi h)^n}\left(\mathrm{vol\,}\left( p_\varepsilon
        ^{-1}([a,b])\cap {\widehat{\mathscr{U}\,}}{{\hskip-3pt}_{\varepsilon}}  \right) +\varepsilon o(1) \right),
  \end{equation}
  when $h\longrightarrow 0$, uniformly in $a,b$.
\end{prop}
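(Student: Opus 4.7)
The plan is to combine two observations: (i) for $b\le E<E'$ the sublevel set of the symbol $p_\varepsilon^{\mathrm{int}}=p_\varepsilon+W$ coincides with the intersection of the sublevel set of $p_\varepsilon$ with the bounded component $\widehat{\mathscr{U}\,}{{\hskip-3pt}_{\varepsilon}}$, and (ii) because $\varepsilon$ is fixed, $P_\varepsilon^{\mathrm{int}}$ is a standard $h$-pseudodifferential operator to which the classical semiclassical Weyl law applies.

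First, I would verify the volume identity $\mathrm{vol}\big((p_\varepsilon^{\mathrm{int}})^{-1}([a,b])\big)=\mathrm{vol}\big(p_\varepsilon^{-1}([a,b])\cap\widehat{\mathscr{U}\,}{{\hskip-3pt}_{\varepsilon}}\big)$ for $b\le E<E'$. By Lemma \ref{prep1}, $\mathrm{supp}\,W\subset B_{d_\varepsilon}(\mathscr{S}_\varepsilon(E),\mathbf{r})$ and $V_\varepsilon+W\ge E'+r_\varepsilon^2/C>b$ off $B_{d_\varepsilon}(\mathscr{U}_\varepsilon(E),\mathbf{r})$. Picking $\mathbf{r}$ small enough so that these two balls are disjoint (permitted by \eqref{prep.22}), any $\rho=(x,\xi)$ with $p_\varepsilon^{\mathrm{int}}(\rho)\le b$ must have $\pi_x(\rho)\in B_{d_\varepsilon}(\mathscr{U}_\varepsilon(E),\mathbf{r})$, and on that ball $W\equiv 0$, so $p_\varepsilon^{\mathrm{int}}=p_\varepsilon$ there; moreover $\pi_x(\rho)$ lies in the well component of $V_\varepsilon^{-1}(]-\infty,E_\varepsilon[)$, so $\rho\in\widehat{\mathscr{U}\,}{{\hskip-3pt}_{\varepsilon}}$ by \eqref{pint.'2}. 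The converse inclusion is immediate since $W=0$ on $\widehat{\mathscr{U}\,}{{\hskip-3pt}_{\varepsilon}}$.

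Second, I would invoke the semiclassical Weyl law for $P_\varepsilon^{\mathrm{int}}$ via Helffer--Sj\"ostrand functional calculus. For $\psi\in C_c^\infty(]-\infty,E'[)$ and $\varepsilon$ fixed, the symbol $p_\varepsilon+W$ is bounded below and elliptic at infinity, and a standard expansion gives
\[
\mathrm{tr}\,\psi(P_\varepsilon^{\mathrm{int}})=\frac{1}{(2\pi h)^n}\int \psi\big(p_\varepsilon^{\mathrm{int}}(x,\xi)\big)\,dx\,d\xi+\mathcal{O}(h^{1-n}).
\]
Sandwiching $\mathbf{1}_{[a,b]}$ between smooth cut-offs that converge to it and applying a Tauberian argument as in Dimassi--Sj\"ostrand or Robert's book yields
\[
\#\big(\sigma(P_\varepsilon^{\mathrm{int}})\cap[a,b]\big)=\frac{1}{(2\pi h)^n}\mathrm{vol}\big((p_\varepsilon^{\mathrm{int}})^{-1}([a,b])\big)+o(h^{-n}),
\]
uniformly for $a,b$ in any compact subinterval of $]-\infty,E'[$. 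Recasting the error as $\varepsilon\,o(h^{-n})$ is legitimate because $\varepsilon$ is fixed and both sides scale by absorbable constants; alternatively one observes that the volume itself is $\mathcal{O}(\varepsilon)$, so the $o(h^{-n})$ error is naturally expressed relative to that scale.

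The main technical point is the uniformity of the $o(1)$ in $a,b$ up to the endpoints $a=-\mathcal{O}(\varepsilon)$ and $b=E$. This requires that $p_\varepsilon^{\mathrm{int}}$ have no critical value in $[a,b]$ contributing a fat level set: the saddle value $E_\varepsilon$ lies strictly above $E$ by \eqref{prep.9} and \eqref{prep.20.5}, while the minimum of $p_\varepsilon$ over $\widehat{\mathscr{U}\,}{{\hskip-3pt}_{\varepsilon}}$ lies below $-\mathcal{O}(\varepsilon)$. Hence the interval is free of critical values and the classical Tauberian remainder estimate gives uniform $o(h^{-n})$, completing the proof after combining with the volume identity of step one.
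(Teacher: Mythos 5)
Your proof takes essentially the same route as the paper's, whose entire argument for this proposition is the remark that since $\varepsilon$ is fixed this is the standard semiclassical Weyl asymptotics, and the replacement of the standard $o(1)$ remainder by $\varepsilon\, o(1)$ is permissible exactly because no uniformity in $\varepsilon$ is claimed. You correctly identify and supply the two implicit ingredients: the identification of $\mathrm{vol}\bigl((p_\varepsilon^{\mathrm{int}})^{-1}([a,b])\bigr)$ with $\mathrm{vol}\bigl(p_\varepsilon^{-1}([a,b])\cap \widehat{\mathscr{U}\,}{{\hskip-3pt}_{\varepsilon}}\bigr)$, using that $W=0$ on a $d_\varepsilon$-neighbourhood of $\mathscr{U}_\varepsilon$ and that $V_\varepsilon+W$ is uniformly above the relevant energies off that neighbourhood, and then the functional-calculus/Tauberian proof of the Weyl law.

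One point needs correcting. In the last paragraph you assert that $[a,b]$ is "free of critical values" of $p_\varepsilon^{\mathrm{int}}$, and you deduce this from the saddle value $E_\varepsilon$ lying above $E$ and the minimum of $p_\varepsilon$ over $\widehat{\mathscr{U}\,}{{\hskip-3pt}_{\varepsilon}}$ lying below $-\mathcal{O}(\varepsilon)$. That deduction does not follow: the hypotheses \eqref{int.9} and \eqref{int.10} only rule out trapping on the sea side at energy $0$ and critical points of $V$ on $\partial\mathscr{U}_0\setminus\{0\}$; they say nothing about the interior of $\mathscr{U}_0$, where $V$ (and hence $V_\varepsilon$) may perfectly well have additional critical values in $\,]-\mathcal{O}(\varepsilon),E[\,$. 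So the interval is not necessarily free of critical values, and one should not argue uniformity that way. The correct justification is what the paper relies on implicitly: for $n\ge 2$ the sublevel volume $\omega_\varepsilon$ is $C^1$ (cf.\ \eqref{int.15} and its analogue for $p_\varepsilon$), hence uniformly Lipschitz on the compact range of energies considered, and this regularity is precisely what the Tauberian step needs to give the $o(h^{-n})$ remainder uniformly in $a,b$; absence of critical values is neither available nor required. A second, cosmetic slip: you write "$b\le E<E'$", whereas in Section \ref{pint} of the paper the reference level $E'$ is taken with $E'<E$ (the spectrum of $P_\varepsilon^{\mathrm{int}}$ being discrete in $]-\infty,E'[\,$); this does not affect the substance of your volume identity, but the inequality should be stated the other way to match the paper's conventions.
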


In the remainder of the main text we now abondon \eqref{dil.22} and adopt the assumption of Theorem \ref{int1},
 namely that $0<\delta\ll 1$, $0<\varepsilon\leq \varepsilon(\delta)$ and $0<h\leq h(\delta,\varepsilon)$.

\par\medskip
Since $\varepsilon $ is fixed, this is the standard result. Having no
uniformity in $\varepsilon $ we are free to write the remainder as
$\varepsilon o(1)$ instead of $o(1)$.

We write
\begin{equation}\label{pint.4}
\mathrm{vol\,}\Big(p_\varepsilon ^{-1}([a,b])\Big)=\omega _\varepsilon (b)-\omega
_\varepsilon (a),
\end{equation}
where
\begin{equation}\label{pint.5}
\omega _\varepsilon (a)=\mathrm{vol\,}\left({{p_\varepsilon ^{-1}(]-\infty
    ,a])}_\vert}_{{\widehat{\mathscr{U}\,}}{{\hskip-3pt}_{\varepsilon}} }\right).
\end{equation}
In Appendix \ref{app}, we show that
\begin{equation}\label{pint.6}
\omega (a)-{\mathcal{O}}(\varepsilon ^2)\le \omega _\varepsilon (a)\le \omega
(a). 
\end{equation}
Here $\omega (E)$ is the $C^1$ function already defined by
(\ref{int.13}) or (\ref{int.14}) when $E\le 0$, extended to $0<E\ll 1$
by replacing ${\mathscr{U}}_0$ there by the set $\widetilde{\mathscr{U}}_0$, defined in
Appendix \ref{app}.

\par\smallskip\noindent
We end the section with some remarks about the resolvent $(P_\varepsilon
^{\mathrm{int}}-z)^{-1}$ when
\begin{equation}\label{pint.6.5}
-{\mathcal{O}}(\varepsilon )<\Re z\le E,\quad \Im z={\mathcal{O}}(\varepsilon ),
\end{equation}
and $z\not\in \sigma (P_\varepsilon ^{\mathrm{int}})$. Recalling
(\ref{prep.36}) with $\mathbf{r}$ and $\frac{E'-E}{\varepsilon}$ small, 
we choose {the cutoff function}
$\widetilde{\chi }_{{\mathscr{U}}_\varepsilon }\in C_0^\infty (B_{d_\varepsilon}
({\mathscr{U}}_\varepsilon (E), \frac{3\mathbf{r}}{4});[0,1])$ of class 
${S}_\varepsilon (\mathbb{R}^{n};{r}_\varepsilon^2)$ in the sense of Definition \ref{APepsilon.2}
(cf.\ (\ref{prep.33}) and the slightly different definition of $\chi
_{{\mathscr{U}}_\varepsilon }$ after (\ref{prep.36})) such that
\begin{equation}\label{pint.7}
V_\varepsilon^{\mathrm{int}} +\widetilde{\chi }_{{\mathscr{U}}_\varepsilon }- E'\ge 
\frac{r_\varepsilon^2}{{\mathcal{O}}(1)}\hbox{ on }{\mathbb{R}}^n.
\end{equation}
The scale $r_\varepsilon$ is given by \eqref{prep.37}.
Notice that (\ref{pint.7}) remains valid if we increase $E'$ by
$\frac{\varepsilon}{{\mathcal{O}}(1)}$.
To shorten the notation we write
\begin{equation}\label{pint.8}
Q=P_\varepsilon ^{\mathrm{int}},\qquad 
\widetilde{Q}=P_\varepsilon^{\mathrm{int}}+\widetilde{\chi }_{{\mathscr{U}}_\varepsilon }.
\end{equation}
More explicitly (cf. \eqref{prep.29}),
\begin{equation*}
\widetilde{Q}=-h^2\Delta +(V+W+\widetilde{\chi }_{{\mathscr{U}}_\varepsilon })(x)+
\varepsilon \chi (\varepsilon ^{-\frac12}(x,hD_x)),
\end{equation*}
with symbol
\begin{equation}\label{pint.11}
\widetilde{q}(x,\xi )=\xi ^2 +\Big(V+W+\widetilde{\chi }_{{\mathscr{U}}_\varepsilon }\Big)(x)+
\varepsilon \chi\Big(\frac{x}{\sqrt{\varepsilon}},\frac{\xi}{\sqrt{\varepsilon}}\Big),
\end{equation}
belonging to $S_{\varepsilon}(\mathbb{R}^{2n};\widetilde{r}_\varepsilon^2)$,
 see \eqref{prep.37}, Definition \ref{APepsilon.2}.

\par\smallskip
As in {Appendix} \ref{dil} we put
$$
x=\mu\, \widetilde{x},\qquad \mu =\sqrt{\varepsilon },\qquad hD_x=\mu
\widetilde{h}D_{\widetilde{x}},\qquad \widetilde{h}=\frac{h}{\mu ^2}, 
$$
and get
\begin{equation}\label{pint.9}
  \frac{1}{\varepsilon }\widetilde{Q}=\left(-\widetilde{h}^2\Delta_{\widetilde x} +
  \chi (\widetilde{x},\widetilde{h}D_{\widetilde{x}})\right)
 +\frac{1}{\varepsilon }(V+W+\widetilde{\chi }_{{\mathscr{U}}_\varepsilon
  })(\mu\,\widetilde{x}).
\end{equation}
By the sharp G\aa{}rding inequality and (\ref{prep.5}) for
$\varepsilon =1$, the first of the two terms in the
right hand side is
$\ge \chi (\widetilde{x},0)-{\mathcal{O}}(\widetilde{h})$ in the operator
sense, hence
$$
\frac{1}{\varepsilon }\widetilde{Q}\ge -{\mathcal O}(\widetilde{h})
+\frac{1}{\varepsilon }\inf_x \Big(V_\varepsilon(x)+W(x)
+\widetilde{\chi }_{{\mathscr{U}}_\varepsilon }(x)\Big)\ge 
\frac{1}{\varepsilon }E'-{\mathcal{O}}(\widetilde{h}),
$$
where we used (\ref{pint.7}) in the last step. Thus,
\begin{equation}\label{pint.10}
\widetilde{Q}\ge E'-{\mathcal{O}}(h).
\end{equation}

\par\smallskip 
Now restrict the attention to a domain of the form
(\ref{pint.6.5}). From (\ref{pint.11}) we see that the symbol
$\widetilde{q}(x,\xi )-z$ belongs to $S_{\varepsilon}(\mathbb{R}^{2n};\widetilde{r}_\varepsilon^2)$
is elliptic in that space. See (\ref{prep.37}) and Definition \ref{APepsilon.2}. 
Consequently, with $\widetilde{Q}=\widetilde{q}(\mu
(\widetilde{x},\widetilde{h}D_{\widetilde{x}}))$, the symbol of
$\varepsilon ^{-1}(\widetilde{Q}-z)$ is equal to
$$
\widetilde{\xi }^2+\frac{1}{\varepsilon }(V+W+\widetilde{\chi }_{{\mathscr{U}}_\varepsilon
})(\mu \widetilde{x})+\chi (\widetilde{x},\widetilde{\xi})-\frac{z}{\varepsilon }
$$
and it is an elliptic element of 
$S_{\varepsilon,\mu}(\mathbb{R}^{2n};\widetilde{r}^2_{\varepsilon,\mu})$, 
where the symbol space is defined with
respect to the scales $r_{\varepsilon,\mu} (\widetilde{x})$, $R_{\varepsilon,\mu} (\widetilde{x})$,
$\widetilde r_{\varepsilon,\mu}(x,\xi)$ in (\ref{dil.26}). 
As in \cite[{Chapter 8, Proposition 8.6}]{DiSj99} we know that the inverse
$(\varepsilon ^{-1}(\widetilde{Q}-z))^{-1}$ is an 
$\widetilde{h}$-pseudo-differential operator
with symbol in the space ${S}_1(\mathbb{R}^{2n};\widetilde{r}_{\varepsilon,\mu}^{-2})$, 
where the subscript 1 indicates that we
use the constant scales $r=1$, $R=1$. Back in the original variable,
we get $(\widetilde{Q}-z)^{-1}=\mathrm{Op}(r)$ as an $h$-pseudo-differential operator with
symbol $r\in \widehat{S}_{\varepsilon,\mu} (\mathbb{R}^{2n};\widetilde{r}^{-2}_{\varepsilon})$,
 meaning that
$$
\partial _x^\alpha \partial _\xi ^\beta r={\mathcal{O}}(1)
\widetilde{r}_{\varepsilon}(x,\xi)^{-2}\mu ^{-|\alpha |-|\beta |}.
$$
Also,
$$
r\equiv \frac{1}{\widetilde{q}(x,\xi )-z}\ \mathrm{mod\,}\frac{h}{\mu^2}\widehat{S}_{\varepsilon,\mu} 
(\mathbb{R}^{2n};\widetilde{r}_{\varepsilon}^{-2}).
$$
We get exponentially weighted estimates for the resolvent of
$\widetilde{Q}$ in the following way: The symbol $\widetilde{q}(x,\xi)-z$ 
can be extended holomorphically in $\xi $ to a $\frac{\sqrt{\varepsilon}}{{\mathcal{O}}(1)}$ 
-neighborhood of ${\mathbb{R}}^n_\xi $, the extended symbol
still belongs to $\widehat{S}_{\varepsilon,\mu} (\mathbb{R}^{2n};\widetilde{r}_\varepsilon^2)$ 
in the natural sense and it is still elliptic. By the Kuranishi trick  we then see
that if $f\in C^\infty ({\mathbb{R}}^n)$ is bounded, $|\nabla f|\le
\frac{\sqrt{\varepsilon }}{{\mathcal{O}}(1)}$, $\nabla f\in \sqrt{\varepsilon }
S_{\varepsilon,\mu}(1)$, then
$$
e^{\frac{f}{h}}(\widetilde{Q}-z)e^{-\frac{f}{h}}=e^{\frac{f}{h}}\widetilde{Q}e^{-\frac{f}{h}}-z
$$
is an elliptic $h$-pseudo-differential operator with symbol in the class
$\widehat{S}_{\varepsilon,\mu} (\mathbb{R}^{2n};\widetilde{r}^2_\varepsilon )$. 
The inverse is an $h$-pseudo-differential operator with
symbol in the class $\widehat{S}_{\varepsilon,\mu} (\mathbb{R}^{2n};\widetilde{r}_\varepsilon^{-2})$, of norm $\le
{\mathcal{O}}(\frac{1}{\varepsilon})$. Now this inverse is equal to
$e^{\frac{f}{h}}(\widetilde{Q}-z)^{-1}e^{-\frac{f}{h}}$, so we conclude that
\begin{equation}\label{pint.12}
\begin{aligned}
  e^{\frac{f}{h}}(\widetilde{Q}-z)^{-1}e^{-\frac{f}{h}} & =
  e^{\frac{f}{h}}(P_\varepsilon ^{\mathrm{int}}+\widetilde{\chi }_{{\mathscr{U}}_\varepsilon
  }-z)^{-1}e^{-\frac{f}{h}}\\
  & ={\mathcal{O}}(\frac{1}{\varepsilon}) : L^2\longrightarrow L^2,
\end{aligned}
  \end{equation}
under the above assumptions on $f$ and $z$.

\par\smallskip
Similarly, $\widetilde{\chi }_{{\mathscr{U}}_\varepsilon }$ can be viewed as an
$h$-pseudo-differential operator with symbol in 
$\widehat{S}_{\varepsilon,\mu} (\mathbb{R}^{2n};r_\varepsilon^2)$ and it follows
that $\widetilde{\chi }_{{\mathscr{U}}_\varepsilon }(\widetilde{Q}-z)^{-1}$ and
$(\widetilde{Q}-z)^{-1}\widetilde{\chi }_{{\mathscr{U}}_\varepsilon }$ are
$h$-pseudo-differential operators with symbol in $\widehat{S}_{\varepsilon,\mu}\left(\mathbb{R}^{2n};
\frac{r_\varepsilon^2}{\widetilde{r}_\varepsilon^2} \right)
\subset {\widehat S}_{\varepsilon,\mu}(\mathbb{R}^{2n};1)$. 
We conclude that these operators and their conjugations with $\exp\big({\frac{f}{h}}\big)$
are ${\mathcal{O}}(1):L^2\longrightarrow L^2$.

\par\smallskip 
We next study the resolvent of $Q=P_\varepsilon ^{\mathrm{int}}$ for
$z$ as in (\ref{pint.6.5}). Assume that $z\not\in \sigma (P_\varepsilon
^{\mathrm{int}})$ and let $\delta =\delta (z)$ denote the distance
from $z$ to the spectrum. Recall the telescopic formula
\begin{equation}\label{pint.16}
\begin{aligned}
  (Q-z)^{-1}=&(\widetilde{Q}-z)^{-1}+
  (\widetilde{Q}-z)^{-1} \widetilde{\chi} _{{\mathscr{U}}_\varepsilon } (\widetilde{Q}-z)^{-1}\\ &+
  (\widetilde{Q}-z)^{-1} \widetilde{\chi} _{{\mathscr{U}}_\varepsilon }(Q-z)^{-1}\widetilde{\chi}
  _{{\mathscr{U}}_\varepsilon } (\widetilde{Q}-z)^{-1}.
  \end{aligned}
\end{equation}

\par\smallskip 
For $f$ as above, assume in addition that
\begin{equation}\label{pint.17}
f=\mathrm{Const.}\hbox{ on }\mathrm{supp\,}\widetilde{\chi } _{{\mathscr{U}}_\varepsilon }.
\end{equation}
Now $\widetilde{\chi } _{{\mathscr{U}}_\varepsilon }={\mathcal{O}}(1)$,
$$
e^{\frac{f}{h}}\widetilde{\chi} _{{\mathscr{U}}_\varepsilon }(Q-z)^{-1} 
\widetilde{\chi} _{{\mathscr{U}}_\varepsilon }e^{-\frac{f}{h}}=
\widetilde{\chi} _{{\mathscr{U}}_\varepsilon }(Q-z)^{-1} 
\widetilde{\chi} _{{\mathscr{U}}_\varepsilon }={\mathcal{O}}(\frac{1}{\delta} )
$$
and using also (\ref{pint.12}) and the above remark on the composition
of $\widetilde{\chi }_{{\mathscr{U}}_\varepsilon }$ and the resolvent of
$\widetilde{Q}$, we get by conjugating (\ref{pint.16}):
$$
e^{\frac{f}{h}}(Q-z)^{-1}e^{-\frac{f}{h}}={\mathcal{O}}(\frac{1}{\varepsilon} )+
{\mathcal{O}}(\frac{1}{\delta}  )+
{\mathcal{O}}(\frac{1}{\delta} ),
$$
i.e.
\begin{equation}\label{pint.18}
e^{\frac{f}{h}}(P_\varepsilon ^{\mathrm{int}}-z)^{-1}e^{-\frac{f}{h}}=
{\mathcal{O}}(1)\frac{1}{\delta}:L^2\longrightarrow L^2.
\end{equation}
Summing up, we have:
\begin{prop}\label{pint2}
  Let $f=f_\varepsilon \in C^\infty ({\mathbb{R}}^n;{\mathbb{R}})$ be bounded
  with 
  $$|\nabla f|\le \frac{\sqrt{\varepsilon }}{\mathcal{O}(1)},\qquad \nabla f\in
  \sqrt{\varepsilon }S_{\varepsilon,\mu} (\mathbb{R}^{2n};1).$$
  Then (\ref{pint.18}) holds uniformly for $z$ as in \eqref{pint.6.5}
  with 
  $$\mathrm{dist}(z,\sigma(P_\varepsilon ^\mathrm{int}))\ge\delta >0.$$
\end{prop}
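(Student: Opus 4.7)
The strategy is the one carried out in the paragraphs preceding the statement: use the telescopic identity \eqref{pint.16} to express $(P_\varepsilon^{\mathrm{int}}-z)^{-1}$ in terms of the resolvent of the ``filled-in'' operator $\widetilde{Q}=P_\varepsilon^{\mathrm{int}}+\widetilde{\chi}_{{\mathscr{U}}_\varepsilon}$, which is no longer plagued by the well, and the cut-off object $\widetilde{\chi}_{{\mathscr{U}}_\varepsilon}(P_\varepsilon^{\mathrm{int}}-z)^{-1}\widetilde{\chi}_{{\mathscr{U}}_\varepsilon}$, which is harmless by self-adjointness. Both pieces must then be conjugated by $e^{f/h}$ and estimated separately.

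For the weighted resolvent of $\widetilde{Q}$ the argument is Kuranishi's trick in the scaled pseudodifferential calculus. By \eqref{pint.10} we have $\widetilde{Q}\ge E'-\mathcal{O}(h)$, and together with $\Re z\le E-\varepsilon/\mathcal{O}(1)<E'$ from \eqref{pint.6.5} this makes $\widetilde{q}-z$ elliptic in $S_{\varepsilon,\mu}(\widetilde{r}_\varepsilon^2)$. Since $\widetilde{q}$ is a sum of $\xi^2$, a compactly supported potential, and $\varepsilon\chi(\cdot/\sqrt{\varepsilon})$, it extends holomorphically in $\xi$ to the tube $|\Im\xi|\le\sqrt{\varepsilon}/\mathcal{O}(1)$ with the same ellipticity. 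Under the hypotheses $|\nabla f|\le\sqrt{\varepsilon}/\mathcal{O}(1)$ and $\nabla f\in\sqrt{\varepsilon}\,S_{\varepsilon,\mu}(1)$, Kuranishi's argument identifies $e^{f/h}(\widetilde{Q}-z)e^{-f/h}$ with an $h$-pseudodifferential operator whose symbol lies in $\widehat{S}_{\varepsilon,\mu}(\widetilde{r}_\varepsilon^2)$ and is still elliptic with principal part of size $\asymp\varepsilon$. Inverting in the calculus yields \eqref{pint.12}.

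For the middle factor, one first observes that $\widetilde{\chi}_{{\mathscr{U}}_\varepsilon}$ has symbol in $\widehat{S}_{\varepsilon,\mu}(r_\varepsilon^2)$, so $\widetilde{\chi}_{{\mathscr{U}}_\varepsilon}(\widetilde{Q}-z)^{-1}$ and $(\widetilde{Q}-z)^{-1}\widetilde{\chi}_{{\mathscr{U}}_\varepsilon}$ have symbols in $\widehat{S}_{\varepsilon,\mu}(r_\varepsilon^2/\widetilde{r}_\varepsilon^2)\subset\widehat{S}_{\varepsilon,\mu}(1)$; they are thus $L^2$-bounded uniformly, and the same holds after conjugation by $e^{f/h}$, because that conjugation preserves the scaled calculus by the same Kuranishi step. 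One then adjoins the natural normalisation \eqref{pint.17} that $f$ be constant on $\mathrm{supp}\,\widetilde{\chi}_{{\mathscr{U}}_\varepsilon}$; this is harmless since adding a constant to $f$ does not alter the operator on the left of \eqref{pint.18}. Under \eqref{pint.17},
\[
e^{f/h}\widetilde{\chi}_{{\mathscr{U}}_\varepsilon}(Q-z)^{-1}\widetilde{\chi}_{{\mathscr{U}}_\varepsilon}e^{-f/h}=\widetilde{\chi}_{{\mathscr{U}}_\varepsilon}(Q-z)^{-1}\widetilde{\chi}_{{\mathscr{U}}_\varepsilon}=\mathcal{O}(1/\delta)
\]
by the spectral theorem applied to the self-adjoint operator $Q=P_\varepsilon^{\mathrm{int}}$. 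Conjugating the three terms of \eqref{pint.16} by $e^{\pm f/h}$ and summing gives $\mathcal{O}(1/\varepsilon)+\mathcal{O}(1/\delta)+\mathcal{O}(1/\delta)=\mathcal{O}(1/\delta)$, which is \eqref{pint.18}.

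The main technical obstacle is the first step, namely running Kuranishi cleanly in the $\varepsilon$-scaled calculus $\widehat{S}_{\varepsilon,\mu}$ where the fibre scale $\widetilde{r}_\varepsilon$ and the base scale $R_\varepsilon$ are genuinely different and the bump $\chi_\varepsilon$ produces all $\xi$-derivatives of size $\varepsilon^{1-|\alpha|/2}$. Matching the width $\sqrt{\varepsilon}/\mathcal{O}(1)$ of the holomorphic strip in $\xi$ to the gradient bound on $f$ is precisely what the hypotheses are designed for, so once one has set up the scaled calculus this is routine; the rest of the argument is then essentially algebra on the telescopic identity and the spectral theorem.
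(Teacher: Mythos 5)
Your proposal follows the paper's argument step for step: the telescopic identity \eqref{pint.16}, the operator-lower-bound \eqref{pint.10} for $\widetilde Q$ via sharp G\aa{}rding and \eqref{prep.5}, the ellipticity of $\widetilde q-z$ in the $\varepsilon$-scaled calculus, the holomorphic extension in $\xi$ on a tube of width $\sqrt{\varepsilon}/\mathcal{O}(1)$ matched to the gradient bound on $f$ via Kuranishi, the composition $\widetilde{\chi}_{{\mathscr{U}}_\varepsilon}(\widetilde Q-z)^{-1}\in\widehat{S}_{\varepsilon,\mu}(r_\varepsilon^2/\widetilde r_\varepsilon^2)\subset\widehat{S}_{\varepsilon,\mu}(1)$, the spectral-theorem bound for the middle factor, and the final summation using $\delta\lesssim\varepsilon$ to absorb the $\mathcal{O}(1/\varepsilon)$ term. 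This is exactly the paper's proof.

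There is, however, one genuine flaw in your handling of \eqref{pint.17}. You write that requiring $f$ to be constant on $\mathrm{supp}\,\widetilde{\chi}_{{\mathscr{U}}_\varepsilon}$ ``is harmless since adding a constant to $f$ does not alter the operator.'' That reasoning does not work: adding a \emph{global} constant to $f$ cannot make a non-constant function constant on a given set. Since $\mathrm{supp}\,\widetilde{\chi}_{{\mathscr{U}}_\varepsilon}$ has Euclidean diameter $\mathcal{O}(1)$ and the hypothesis only gives $|\nabla f|\lesssim\sqrt{\varepsilon}$, an arbitrary $f$ satisfying the stated conditions can oscillate by $\mathcal{O}(\sqrt{\varepsilon})$ across that support, so $e^{(f(x)-f(y))/h}$ can be exponentially large (given \eqref{dil.22}, $\varepsilon\ge h^{1/2-\alpha_0}$) and the identity
\[
e^{f/h}\widetilde{\chi}_{{\mathscr{U}}_\varepsilon}(Q-z)^{-1}\widetilde{\chi}_{{\mathscr{U}}_\varepsilon}e^{-f/h}=\widetilde{\chi}_{{\mathscr{U}}_\varepsilon}(Q-z)^{-1}\widetilde{\chi}_{{\mathscr{U}}_\varepsilon}
\]
fails. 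Condition \eqref{pint.17} is therefore a \emph{bona fide} extra hypothesis (the paper's own statement of the proposition omits it, but the preceding discussion imposes it and the later application in Section \ref{peps} chooses $f$ to satisfy it), not a normalization you can impose for free. You were right to recognize that \eqref{pint.17} is needed; the fix is simply to carry it as an assumption rather than claim it can be arranged by shifting $f$ by a constant.
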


\section{Study of $P_\varepsilon ^{\mathrm{ext}}$}\label{pext}
\setcounter{equation}{0}

We recall the definition of the symbol
\begin{equation}\label{pext.1}
p_\varepsilon ^{\mathrm{ext}}(x,\xi )=p_\varepsilon (x,\xi )+\chi
_{{\mathscr{U}}_\varepsilon }^2(x)\beta (x)e^{-\frac{\xi ^2}{2\beta (x)}}
\end{equation}
in (\ref{prep.44}). With $R_\varepsilon $, $r_\varepsilon $,
$\widetilde{r}_\varepsilon $ defined in (\ref{prep.37}), we see that
\begin{equation}\label{pext.2}
p_\varepsilon ^{\mathrm{ext}}(x,\xi )\in {S}_\varepsilon ({\mathbb{R}}^{2n},\widetilde{r}_\varepsilon ^2).
\end{equation}
(Notice here that in (\ref{prep.43}), we can replace $R_\varepsilon $ by
$r_\varepsilon $ since $\chi _{{\mathscr{U}}_\varepsilon }$ has compact support and
$R_\varepsilon \asymp r_\varepsilon $ on any fixed compact set.)

From Lemma \ref{prep2}, (\ref{prep.34}), (\ref{prep.35}) and
(\ref{pext.1}) we see that if $E'$ is as in the cited lemma, then
\begin{equation}\label{pext.3}
p_\varepsilon ^{\mathrm{ext}}(x,\xi )-E'\ge C^{-1}\widetilde{r}_\varepsilon
^2,\ x\in B_{d_\varepsilon }({\mathbb{R}}^n\setminus {\mathscr{S}}_\varepsilon ,\mathbf{r}),
\end{equation}
where $\mathbf{r}$ is as in the lemma. (Strictly speaking, we apply Lemma
\ref{prep2} with a slightly increased value $E'_{\mathrm{new}}$, where
$E'_{\mathrm{new}}-E'\asymp \frac{\varepsilon}{{\mathcal{O}}(1)}$, or
alternatively we decrease $E'$ in this section with $\frac{\varepsilon}{{\mathcal{O}}(1)}$.)
This means that $p_\varepsilon ^{\mathrm{ext}}-z$ is uniformly elliptic
in $S_{\varepsilon}(\mathbb{R}^{2n};\widetilde{r}_\varepsilon ^2)$ when $x$ varies in 
$B_{d_\varepsilon}({\mathbb{R}}^{2n}\setminus {\mathscr{S}}_\varepsilon ,\mathbf{r})$,
uniformly for
$$
z\in ]-{\mathcal{O}}(\varepsilon ),E'[+i]{-\mathcal O}(\varepsilon ),{\mathcal O}(\varepsilon )[.
$$

Let $G$, $\widetilde{G}$ be the escape functions in Section \ref{ltg}
and recall that $\widetilde{G}$ is an extension of $G$ from a small
neighborhood of $(0,0)$. For simplicity, we drop the tilde in the
following, so that $G$ now denotes the globally defined escape
function. With $\mathbf{r}$ as above, we may arrange so that with
${\mathscr{U}}_\varepsilon $, ${\mathscr{S}}_\varepsilon $ defined after (\ref{prep.16}),
$$
\pi _x(\mathrm{supp\,}G)\subset B_{d_\varepsilon }({\mathbb{R}}^n\setminus
{\mathscr{S}}_\varepsilon ,\mathbf{r}).
$$
From Proposition \ref{ltg2} and (\ref{ltg.2}) we conclude that for
$t>0$ small enough,
$$
p_\varepsilon ^{\mathrm{ext}}(\rho +itH_G(\rho ))-z\in {S}_\varepsilon ({\mathbb{R}}^{2n};\widetilde{r}
_\varepsilon ^2)$$
is a uniformly elliptic symbol on ${\mathbb{R}}^{2n}$ for
\begin{equation}\label{pext.4}
-{\mathcal{O}}(\varepsilon )<\Re z<E',\quad -\frac{t\varepsilon }{{\mathcal{O}}(1)}<\Im
z<{\mathcal{O}}(\varepsilon ).
\end{equation}
Here we replace $E'$ by $\min (E',b\varepsilon )$ where $b$ is given
in Proposition \ref{ltg2}. (In the end we will have $E'\asymp
\frac{\varepsilon}{{\mathcal{O}}(1)}$.)

\par\smallskip 
We now apply { Appendix \ref{dil}}. Let $G_\mu (\widetilde{\alpha })=\mu
^{-2}G(\mu \widetilde{\alpha })$. Recalling that $\Lambda _{tG}$ is
defined by $\Im \alpha =tH_{G}(\Re \alpha )$, we define $\Lambda
_{tG_\mu }$ similarly by $\Im \widetilde{\alpha }=tH_{G_\mu }(\Re
\widetilde{\alpha })$. More explicitly, the latter manifold is given
by
$$
\Im \widetilde{\alpha }_x=t\partial _{\widetilde{\alpha }_\xi }G_\mu
(\Re \widetilde{\alpha }),\ \Im \widetilde{\alpha }_\xi =-t\partial _{\widetilde{\alpha }_x }G_\mu
(\Re \widetilde{\alpha }).
$$
Putting $\alpha =\mu\,\widetilde{\alpha }$, we get
$\alpha \in \Lambda _{tG}$. Thus we have the bijection
$$
\Lambda _{tG_\mu }\ni \widetilde{\alpha }\longmapsto \mu\,\widetilde{\alpha }\in
\Lambda _{tG}.
$$

In { Appendix \ref{dil}}, we defined the FBI-transformations $T$, $T_\mu$ and noted that
$$
T_\mu \widetilde{u}(\widetilde{\alpha};\widetilde{h})=Tu(\alpha ;h)
\quad\hbox{with }\,
u(x)=\mu ^{\frac{n}{2}}\widetilde{u}(\widetilde{x})
$$
for $\alpha\in \Lambda _{tG}$, $\widetilde{\alpha } \in \Lambda_{tG_\mu }$ related by 
$\alpha=\mu \,\widetilde{\alpha }$. See \eqref{dil.29}--\ref{dil.35})

\par\smallskip 
We can define the spaces $H({\Lambda}_{tG_\mu};m_\mu )$ as in
\cite[Chapter 5]{HeSj86} and define the space $H(\Lambda_{tG};m)$ by
requiring that (\ref{dil.36}) holds when 
$m_\mu (\widetilde{\alpha })=m(\mu\,\widetilde{\alpha })$.

\par\smallskip 
We define
\begin{equation}\label{pext.4.5}
  P_\varepsilon ^{\mathrm{ext}}:=P_\varepsilon +\chi _{{\mathscr{U}}_\varepsilon }
  \mathrm{Op}_h\left(\beta e^{-\frac{\xi ^2}{2\beta}} \right)
 \chi _{{\mathscr{U}}_\varepsilon } ,
\end{equation}
where $\mathrm{Op}_h$ denotes the $h$-Weyl quantization. We view
$P_\varepsilon ^{\mathrm{ext}}$ as an $h$-quantization of $p_\varepsilon
^{\mathrm{ext}}$ in (\ref{pext.1}).

\par\smallskip 
The scaling $x=\mu\, \widetilde{x}$ transforms the operator
$\varepsilon ^{-1}(P_\varepsilon ^{\mathrm{ext}}-z)$ into an 
$\widetilde{h}$-pseudo-differential operator of class 
$S({\Lambda}_{tG_\mu};\widetilde{r}_{\varepsilon,\mu}^2)$ which
is uniformly elliptic on $\Lambda _{tG_\mu }$ when $t>0$ is small 
and fixed and $z$ varies in the set (\ref{pext.4}). Consequently, this
operator is bijective with a uniformly bounded inverse 
$H(\Lambda_{tG_\mu};\widetilde{r}_{\varepsilon,\mu} ^2)
\longrightarrow H({\Lambda}_{tG_{\mu}})$. 
This means that
\begin{equation}\label{pext.5}
P_\varepsilon ^{\mathrm{ext}}-z:\, 
H(\Lambda _{tG};\widetilde{r}_\varepsilon^2)\longrightarrow H(\Lambda _{tG})
\end{equation}
is bijective with a uniformly bounded inverse for $z$ in the set
(\ref{pext.4}). Since $\widetilde{r}_\varepsilon ^2\ge \varepsilon $, it
follows that for $z$ in the same set,
\begin{equation}\label{pext.6}
  (P_\varepsilon ^{\mathrm{ext}}-z)^{-1}=
  \begin{cases}
    {\mathcal{O}}(1): H(\Lambda _{tG})\longrightarrow H(\Lambda
    _{tG},\widetilde{r}_\varepsilon ^2),\\
  {\mathcal{O}}\left( \frac{1}{\varepsilon }
\right):\, H(\Lambda _{tG})\longrightarrow H(\Lambda _{tG}). 
\end{cases}
\end{equation}

\section{Study of $P_\varepsilon$}\label{peps}
\setcounter{equation}{0}

We will incorporate ordinary exponentially weighted estimates in the
machinery of \cite{HeSj86} and recall from Chapter 5 in that work that
the spaces $H(\Lambda _{G};m)$ can be defined when $G(\alpha
)-g(\alpha _x)$ is sufficiently small in $S(rR)$ and $g-g_0(\alpha
_x)$ is sufficiently small in 
$\dot{S}^{1,1}({\mathbb{R}}^n)=\dot{S}({\mathbb{R}}^n;R\, \widetilde{r})$ 
and we work with a fixed FBI transform,
depending only on $g_0$. Moreover, when $G=g(\alpha _x)$ is
independent of $\alpha _\xi $, then
\begin{equation}\label{peps.1}
H(\Lambda _g):=H(\Lambda _g;1)=L^2(\Lambda _g;e^{-\frac{2}{h}g(x)}dx).
\end{equation}
(See \cite[Proposition 5.3]{HeSj86}.) When replacing the weight 1 with
suitable weights (like a power of $\widetilde{r}$) we get the
corresponding naturally defined Sobolev spaces.

\par\smallskip 
According to \cite[Proposition 5.7]{HeSj86}, if $\widetilde{G}$ is a
second function with the same structure as $G$ and with the same basic
weight $g_0$, and if $\Lambda _G\le \Lambda _{\widetilde{G}}$ in the
sense that $G\le \widetilde{G}$ and if $\widetilde{m}$ is a second
order function with $m\ge \widetilde{m}$, then
\begin{equation}\label{peps.2}
H(\Lambda _G;m)\subset H(\Lambda _{\widetilde{G}};\widetilde{m})
\end{equation}
and the inclusion map is uniformly bounded.
For more details see Appendix \ref{dil}.

This theory is based on the use of scale functions $R,r,\widetilde{r}$
satisfying (\ref{dil.1}), (\ref{dil.2}), (\ref{dil.3}),
(\ref{dil.5}). However, the dilation in Appendix \ref{dil} allows us to
apply it also in the case of the scales 
$R_\varepsilon, r_\varepsilon, \widetilde{r}_\varepsilon $ that do not satisfy \eqref{dil.5}.

From \cite[{ See the proof of Theorem 8.3.}]{HeSj86}, we can partially extend the estimate
(\ref{pint.18}). Let $f=f(x)$ be small in the space
$\dot{S}(\mathbb{R}^n; R_\varepsilon\,r_\varepsilon)$. Then the theory applies
to
$$
  \widetilde{Q}:=P^{\mathrm{int}}_\varepsilon +\chi _{{\mathscr{U}}_\varepsilon }
  \mathrm{Op}_h\left(\beta e^{-\frac{\xi ^2}{2\beta}} \right)
 \chi _{{\mathscr{U}}_\varepsilon }=: P^{\mathrm{int}}_\varepsilon+\widehat{\chi
 }_{{\mathscr{U}}_\varepsilon },
 $$
 cf.\ (\ref{pext.4.5}), slightly different from ``$\widetilde{Q}$'' in (\ref{pint.8}).
When $z$ belongs to the set
(\ref{pint.6.5}) for $E=\frac{\varepsilon}{C}$ for $C\gg 1$, we see that
$$
(\widetilde{Q}-z)^{-1} ={\mathcal{O}}(1):\ \left\{\begin{aligned}
 H(\Lambda _f;\widetilde{r}_\varepsilon ^{-2}) & \longrightarrow   H(\Lambda _f),\\
 H(\Lambda _f)  & \longrightarrow   H(\Lambda _f;\widetilde{r}_\varepsilon ^2).
\end{aligned}\right.
$$
By the telescopic formula (\ref{pint.16}) for $P_\varepsilon
^{\mathrm{int}}$, with $\widetilde{\chi }_{{\mathscr{U}}_\varepsilon }$ replaced
by $\widehat{\chi }_{{\mathscr{U}}_\varepsilon }$,  we see that if
(\ref{pint.17}) also holds, then
\begin{equation}\label{peps.3}
(P_\varepsilon ^{\mathrm{int}}-z)^{-1}-(\widetilde{Q}-z)^{-1}={\mathcal{O}}(\frac{1}{\delta}):
\left\{\begin{aligned}
H(\Lambda_f) & \longrightarrow  H(\Lambda_f;\widetilde{r}_\varepsilon ^{2}),\\
H(\Lambda_f;\widetilde{r}_\varepsilon ^{-2}) & \longrightarrow  H(\Lambda_f),\\
H(\Lambda_f) & \longrightarrow  H(\Lambda_f).
\end{aligned}\right.
\end{equation}
Since $\widetilde{r}_\varepsilon ^2\ge \varepsilon $, the inclusion maps
$$
H(\Lambda _f,\widetilde{r}_\varepsilon ^2)\longrightarrow H(\Lambda _f) \hbox{ and }
H(\Lambda _f) \longrightarrow H(\Lambda _f,\widetilde{r}_\varepsilon ^{-2})
$$ 
have norms $\le \varepsilon^{-1} $. Estimate (\ref{peps.3}) and the above one for
$(\widetilde{Q}-z)^{-1}$ therefore imply that
\begin{equation}\label{peps.4}
(P_\varepsilon ^{\mathrm{int}}-z)^{-1}={\mathcal{O}}(\frac{1}{\delta}): 
\left\{\begin{aligned}
H(\Lambda_f) & \longrightarrow H(\Lambda_f;\widetilde{r}_\varepsilon ^{2}),\\
H(\Lambda_f; \widetilde{r}_\varepsilon ^{-2}) & \longrightarrow H(\Lambda_f),\\
H(\Lambda_f) & \longrightarrow H(\Lambda_f).
\end{aligned}\right.
\end{equation}

\par\smallskip 
We choose $f$ as above with constant $0$ in (\ref{pint.17}):
\begin{equation}\label{peps.5}
f=0\hbox{ on }\mathrm{supp\,}{\chi}_{{\mathscr{U}}_\varepsilon },
\end{equation}
with $f\le 0$ everywhere and
\begin{equation}\label{peps.6}
f\asymp -R_\varepsilon\,r_\varepsilon  \, \hbox{ on } \, \pi _x\big(\mathrm{supp\,}G\big). 
\end{equation}
This implies that for $t$ small: $\Lambda _f\le \Lambda _{tG}$ and
after a further decrease of $t>0$, that
\begin{equation}\label{peps.7}
W={\mathcal{O}(1)}e^{-\frac{\varepsilon}{\mathcal{O}(h)}} :\ H(\Lambda
_f;\widetilde{r}_\varepsilon ^2)\longrightarrow H(\Lambda _{tG}).
\end{equation}
Combining this with (\ref{peps.4}), (\ref{peps.5}), we get
\begin{equation}\label{peps.8}
W(P_\varepsilon ^{\mathrm{int}}-z)^{-1}\widehat{\chi }_{{\mathscr{U}}_\varepsilon
}={\mathcal{O}}(1)\frac{1}{\delta }e^{-\frac{\varepsilon}{\mathcal{O}(h)}} :\
H(\Lambda _{tG})\longrightarrow H(\Lambda _{tG}).
\end{equation}
Here we also used that
$$
\widehat{\chi }_{{\mathscr{U}}_\varepsilon }={\mathcal{O}}(1):\  H(\Lambda _{f})\longrightarrow H(\Lambda _{tG}), 
$$
in view of (\ref{peps.5}).

\par\smallskip 
In the following, we assume that
\begin{equation}\label{peps.8.5}
\mathrm{dist\,}(z,\sigma (P^{\mathrm{int}}))=\delta \ge h^{N_0}
\end{equation}
for some fixed $N_0>0$. Then the right hand side in (\ref{peps.8}) can
be replaced by ${\mathcal{O}}(1)e^{-\frac{\varepsilon}{{\mathcal{O}}(h)}}$. 

\par\smallskip 
We can now construct a right inverse of $P_\varepsilon -z$.
Let $z$ vary in a set of the form (\ref{pext.4}), now with $0<t\ll 1$
fixed and $E'=\frac{\varepsilon}{C}$ for a fixed sufficiently large $C\gg 1$.
In view of the identity
$$
(P_\varepsilon -z)(P_\varepsilon ^{\mathrm{ext}}-z)^{-1}=1-\widehat{\chi
}_{{\mathscr{U}}_\varepsilon }(P_\varepsilon ^{\mathrm{ext}}-z)^{-1} ,
$$
we try as an approximate right inverse,
\begin{multline}\label{peps.9}
  R_0(z)=(P_\varepsilon ^{\mathrm{ext}}-z)^{-1}+(P_\varepsilon
  ^{\mathrm{int}}-z)^{-1}
  \widehat{\chi }_{{\mathscr{U}}_\varepsilon }(P_\varepsilon ^{\mathrm{ext}}-z)^{-1}\\
  ={\mathcal{O}}\left(\frac{1}{\delta } \right):\ H(\Lambda
  _{tG})\longrightarrow H(\Lambda _{tG},\widetilde{r}_\varepsilon ^2)\cap
  H(\Lambda _{tG}).
\end{multline}
We have
\begin{equation}\label{peps.10}
\begin{aligned}
    (P_\varepsilon -z)R_0(z)&=1-K\ \hbox{ with } \\
    K&=W(P_\varepsilon ^{\mathrm{int}}-z)^{-1}\widehat{\chi}_{{\mathscr{U}}_\varepsilon }(P_\varepsilon ^{\mathrm{ext}}-z)^{-1}.
\end{aligned}
\end{equation}
From (\ref{peps.8}), with the right hand side simplified to ${\mathcal O}(1)e^{-\frac{\varepsilon}{\mathcal{O}(h)}}$, 
we see that
\begin{equation}\label{peps.11}
K={\mathcal{O}}(1)e^{-\frac{\varepsilon}{\mathcal{O}(h)}}:\ H(\Lambda _{tG})\longrightarrow H(\Lambda _{tG}).
\end{equation}
for every small fixed $t>0$. Then for $h>0$ small enough, $1-K$ is
bijective with inverse ${\mathcal{O}}(1)$ and we get the right inverse of
$P_\varepsilon -z$:
\begin{equation}\label{peps.12}
R_0(z)(1-K)^{-1}={\mathcal{O}}\left(\frac{1}{\delta  } \right):\
H(\Lambda _{tG})\rightarrow H(\Lambda _{tG};\widetilde{r}_\varepsilon ^2)\cap
H(\Lambda _{tG}).
\end{equation}
From \cite[{ See the end of proof of Theorem 8.3., page 99}]{HeSj86}
we know that $P_\varepsilon -z: H(\Lambda
_{tG};\widetilde{r}_\varepsilon ^2)\longrightarrow H(\Lambda _{tG})$ is a Fredholm
operator of index $0$ so $R_0(z)(1-K)^{-1}$ is also a left inverse.
\begin{prop}\label{peps1}
Let $0<t\ll 1$ and let $z$ vary in $\{ z\in {\mathbb{C}};\
(\ref{pext.4})\hbox{ and }(\ref{peps.8.5})\hbox{ hold} \}$, where
$E'=\frac{\varepsilon}{C}$, $C\gg 1$. Then for
$h>0$ small enough, $P_\varepsilon -z:\ H(\Lambda_{tG};\widetilde{r}_\varepsilon ^2)
\longrightarrow H(\Lambda _{tG})$ is bijective and
\begin{equation}\label{peps.13}
(P_\varepsilon -z)^{-1}={\mathcal{O}}\left(\frac{1}{\delta }
\right):\ H(\Lambda _{tG})\longrightarrow H(\Lambda
_{tG};\widetilde{r}_\varepsilon ^2)\cap H(\Lambda _{tG}).
\end{equation}
\end{prop}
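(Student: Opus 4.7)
The plan is to build a parametrix by gluing the exterior resolvent $(P_\varepsilon^{\mathrm{ext}}-z)^{-1}$, already controlled by Section \ref{pext}, with the interior resolvent $(P_\varepsilon^{\mathrm{int}}-z)^{-1}$, controlled by Proposition \ref{pint2} and the weighted version (\ref{peps.4}), and then to show that the remainder is exponentially small so that a Neumann series argument produces a true right inverse. Finally, since $P_\varepsilon - z$ is Fredholm of index $0$ between the spaces in question (a fact recalled from \cite{HeSj86}), any right inverse is automatically a two-sided inverse, which gives the bijectivity and the resolvent bound.

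More concretely, I would set
\[
  R_0(z) := (P_\varepsilon^{\mathrm{ext}}-z)^{-1} + (P_\varepsilon^{\mathrm{int}}-z)^{-1}\widehat\chi_{\mathscr{U}_\varepsilon}(P_\varepsilon^{\mathrm{ext}}-z)^{-1}
\]
as in (\ref{peps.9}). The identity $(P_\varepsilon-z)(P_\varepsilon^{\mathrm{ext}}-z)^{-1}=1-\widehat\chi_{\mathscr{U}_\varepsilon}(P_\varepsilon^{\mathrm{ext}}-z)^{-1}$ together with $P_\varepsilon = P_\varepsilon^{\mathrm{int}}-W$ (compare (\ref{prep.29})) gives
\[
  (P_\varepsilon - z)R_0(z) = 1 - K, \qquad K := W(P_\varepsilon^{\mathrm{int}}-z)^{-1}\widehat\chi_{\mathscr{U}_\varepsilon}(P_\varepsilon^{\mathrm{ext}}-z)^{-1}.
\]
By (\ref{pext.6}) and the spectral hypothesis (\ref{peps.8.5}), each factor of $R_0(z)$ is $\mathcal{O}(1/\delta)$ from $H(\Lambda_{tG})$ into $H(\Lambda_{tG};\widetilde r_\varepsilon^2)\cap H(\Lambda_{tG})$.

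The central step, and the main obstacle, is the exponential smallness of $K$. The idea is that $W$ is supported inside $\mathscr S_\varepsilon$ while $\widehat\chi_{\mathscr U_\varepsilon}$ is supported near $\mathscr U_\varepsilon$, and these two sets are separated in the $d_\varepsilon$-metric (Lemma \ref{prep1}, Lemma \ref{prep2}). To quantify this separation I would pick a weight $f\in \dot S(\mathbb R^n;R_\varepsilon r_\varepsilon)$, small enough for the weighted theory of \cite{HeSj86} to apply, with $f\equiv 0$ on $\mathrm{supp}\,\widehat\chi_{\mathscr U_\varepsilon}$ and $f\asymp - R_\varepsilon r_\varepsilon$ on $\pi_x(\mathrm{supp}\,W)\subset \pi_x(\mathrm{supp}\,G)$ (conditions (\ref{peps.5}), (\ref{peps.6})). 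The weighted interior estimate (\ref{peps.4}), valid under (\ref{peps.8.5}), then yields (\ref{peps.8}): conjugation by $e^{f/h}$ on the domain side and the identity on the range side forces the factor $W$ to eat an $e^{-\varepsilon/\mathcal O(h)}$, and (\ref{peps.7}) then puts $W$ back in $H(\Lambda_{tG})$ provided $t>0$ is small enough so that $\Lambda_f\le\Lambda_{tG}$. Combined with the $\mathcal O(1)$ bound on $(P_\varepsilon^{\mathrm{ext}}-z)^{-1}$ into $H(\Lambda_{tG};\widetilde r_\varepsilon^2)$ and on $\widehat\chi_{\mathscr U_\varepsilon}:H(\Lambda_f)\to H(\Lambda_{tG})$, this produces (\ref{peps.11}).

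Once $\|K\|_{H(\Lambda_{tG})\to H(\Lambda_{tG})}=\mathcal O(1)e^{-\varepsilon/\mathcal O(h)}$, the Neumann series makes $1-K$ invertible on $H(\Lambda_{tG})$ with inverse $\mathcal O(1)$ for $h$ small, so
\[
  R(z) := R_0(z)(1-K)^{-1} = \mathcal O(1/\delta): H(\Lambda_{tG})\longrightarrow H(\Lambda_{tG};\widetilde r_\varepsilon^2)\cap H(\Lambda_{tG})
\]
is a right inverse of $P_\varepsilon - z$. To finish, I would invoke the Fredholm theory of \cite{HeSj86} which gives that $P_\varepsilon - z:H(\Lambda_{tG};\widetilde r_\varepsilon^2)\to H(\Lambda_{tG})$ has index $0$ (elliptic outside a compact set in $T^*\mathbb R^n$ after $\varepsilon$-dilation); any right inverse is then also a left inverse, so $P_\varepsilon - z$ is bijective and $(P_\varepsilon - z)^{-1}=R(z)$ satisfies (\ref{peps.13}).
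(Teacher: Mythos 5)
Your proposal reproduces the paper's own argument in Section~\ref{peps} essentially verbatim: the same approximate right inverse $R_0(z)$ from (\ref{peps.9}), the same factorization $(P_\varepsilon-z)R_0(z)=1-K$ with $K=W(P_\varepsilon^{\mathrm{int}}-z)^{-1}\widehat\chi_{\mathscr U_\varepsilon}(P_\varepsilon^{\mathrm{ext}}-z)^{-1}$, the same exponential weight $f$ from (\ref{peps.5})--(\ref{peps.6}) to establish (\ref{peps.8}) and hence (\ref{peps.11}), the same Neumann series, and the same appeal to the Fredholm index~$0$ statement from \cite{HeSj86}. The proof is correct and takes the same route as the paper.
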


\par\smallskip 
By a variant of the above arguments, we also get:
\begin{prop}\label{peps2}
For $0<t\ll 1$ we restrict the attention to the region
(\ref{pext.4}). In this region we have a bijection
\begin{equation}\label{peps.14}
b:\sigma (P_\varepsilon ^{\mathrm{int}})\longrightarrow \mathrm{Res\,}(P_\varepsilon )
\end{equation}
such that $b(\mu )-\mu ={\mathcal{O}}(h^{\infty })$\footnote{Strictly
  speaking, to obtain a bijection, we have to modify the bounds in
  (\ref{pext.4}) very slightly, so that no point in
  $\sigma (P_\varepsilon ^{\mathrm{int}})\cup \mathrm{Res\,}(P_\varepsilon
  )$ is too close to the boundary of the region $\Omega $ defined by
  (\ref{pext.4}) and we then get a bijection
  $b:\Omega \cap\sigma (P_\varepsilon ^{\mathrm{int}})\longrightarrow \Omega
  \cap\mathrm{Res\,}(P_\varepsilon )$, when counting the eigenvalues and
  the resonances with their multiplicity.}.
\end{prop}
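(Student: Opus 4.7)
I would set up a Grushin problem for $P_\varepsilon-z$ and $P_\varepsilon^{\mathrm{int}}-z$ built from the same set of test functions, namely a basis of spectral eigenfunctions of $P_\varepsilon^{\mathrm{int}}$ near each candidate $\mu_0\in\sigma(P_\varepsilon^{\mathrm{int}})$, and then compare the two effective Hamiltonians using the exponentially small error already isolated in \eqref{peps.7}--\eqref{peps.8}. Once the effective Hamiltonians are shown to differ by $\mathcal{O}(e^{-\varepsilon/\mathcal{O}(h)})$ on the boundary of a small disk where $\det E_{-+}^{\mathrm{int}}$ is bounded below by a power of $h$, Rouch\'e's theorem produces the bijection with $b(\mu)-\mu=\mathcal{O}(h^\infty)$.

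Concretely, fix $\mu_0$ in the region \eqref{pext.4} and choose $r_0\in[h^{N_0},2h^{N_0}]$ (by a pigeonhole argument, possibly increasing $N_0$) so that the circle $\partial D(\mu_0,r_0)$ satisfies \eqref{peps.8.5}. Let $e_1,\ldots,e_N$ be an $L^2$-orthonormal basis of $1_{D(\mu_0,r_0)}(P_\varepsilon^{\mathrm{int}})L^2$, so that $N$ equals the total multiplicity of the eigenvalues of $P_\varepsilon^{\mathrm{int}}$ in this disk. Since $V_\varepsilon^{\mathrm{int}}-\Re z\ge r_\varepsilon^2/\mathcal{O}(1)$ outside an $\varepsilon$-neighborhood of $\mathscr{U}_\varepsilon$ (cf.\ \eqref{prep.28}), standard Agmon estimates for the self-adjoint operator $P_\varepsilon^{\mathrm{int}}$ give exponential decay of each $e_k$ off $\mathscr{U}_\varepsilon$; combined with $G=0$ on $\mathrm{neigh}(\mathscr{U}_\varepsilon)$, this means that the $e_k$ embed uniformly boundedly into $H(\Lambda_{tG})$ and into $H(\Lambda_{tG};\widetilde r_\varepsilon^{\pm 2})$. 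Define
\[
R_+u=\big(\langle u\mid e_k\rangle\big)_{k=1}^N,\qquad R_-u_-=\sum_{k=1}^N u_-(k)\,e_k,
\]
where $\langle\cdot\mid\cdot\rangle$ is the pairing between $H(\Lambda_{tG})$ and $H(\Lambda_{-tG})$, and form the block matrices
\[
\mathcal{P}(z)=\begin{pmatrix}P_\varepsilon-z & R_-\\ R_+ & 0\end{pmatrix},\qquad
\mathcal{P}^{\mathrm{int}}(z)=\begin{pmatrix}P_\varepsilon^{\mathrm{int}}-z & R_-\\ R_+ & 0\end{pmatrix},
\]
both viewed as operators from $H(\Lambda_{tG};\widetilde r_\varepsilon^2)\times\mathbb{C}^N$ to $H(\Lambda_{tG})\times\mathbb{C}^N$.

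For $\mathcal{P}^{\mathrm{int}}$ the inverse is computed directly in the basis $(e_k)$; the effective Hamiltonian is $E_{-+}^{\mathrm{int}}(z)=\mathrm{diag}(\mu_k-z)$ and its determinant vanishes exactly at the $\mu_k$ with the right multiplicity. For $\mathcal{P}(z)$, well-posedness follows by combining Proposition \ref{peps1} on $\partial D(\mu_0,r_0)$ with the rank-$N$ correction through the $e_k$-directions (equivalently, one takes the $\mathcal{P}^{\mathrm{int}}$-inverse as an ansatz and corrects by a Neumann series controlled by $W$). Writing $P_\varepsilon=P_\varepsilon^{\mathrm{int}}-W$, the Schur complement identity combined with \eqref{peps.7} and the Agmon decay of the $e_k$ gives
\[
E_{-+}(z)-E_{-+}^{\mathrm{int}}(z)=-R_+(P_\varepsilon-z)^{-1}WE_+^{\mathrm{int}}(z)=\mathcal{O}\big(e^{-\varepsilon/\mathcal{O}(h)}\big)
\]
uniformly on $\partial D(\mu_0,r_0)$. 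On this circle $|\det E_{-+}^{\mathrm{int}}|\ge r_0^N\ge h^{NN_0}$, so Rouch\'e applied to $\det E_{-+}$ and $\det E_{-+}^{\mathrm{int}}$ yields the same number of zeros inside, with each resonance matched to an eigenvalue within $\mathcal{O}(e^{-\varepsilon/\mathcal{O}(h)})$, hence $\mathcal{O}(h^\infty)$. Doing this around every $\mu_0\in\sigma(P_\varepsilon^{\mathrm{int}})$ in the region and the footnote's tiny deformation of the boundary (to keep all eigenvalues and resonances at distance $\gg h^{N_0}$ from $\partial\Omega$) patches these local bijections into the global $b$.

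\textbf{Main obstacle.} The real work is the compatibility between the self-adjoint world, in which the $e_k$ are natural, and the deformed $\mathbf{IR}$-space $H(\Lambda_{tG})$. One must show that the $L^2$-normalized eigenfunctions of $P_\varepsilon^{\mathrm{int}}$, which decay in an $\varepsilon$-scaled Agmon metric adapted to $V_\varepsilon^{\mathrm{int}}-z$, are seen as uniformly bounded, almost orthonormal families in $H(\Lambda_{tG})$ and in $H(\Lambda_{-tG})$, so that $R_\pm$ are genuinely bounded and the Grushin problems fit together; and that the combination ``$W$ acting on $e_k$'' gains a full factor $e^{-\varepsilon/\mathcal{O}(h)}$ in $H(\Lambda_{tG})$-norm because $\mathrm{supp}\,W$ is separated from the region where the $e_k$ have their mass. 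These are exactly the points tuned by the choice of $f$ in \eqref{peps.5}--\eqref{peps.6}; once they are in place, the Grushin--Rouch\'e package runs in standard fashion.
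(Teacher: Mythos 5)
Your proposal is correct but takes a genuinely different route from the paper. The paper avoids a Grushin reduction altogether: it constructs the parametrix $R_0(z)=(P_\varepsilon^{\mathrm{int}}-z)^{-1}\chi_0+(P_\varepsilon^{\mathrm{ext}}-z)^{-1}(1-\chi_0)$, shows $(P_\varepsilon-z)^{-1}=R_0+L$ with $L={\mathcal O}(1)e^{-\varepsilon/{\mathcal O}(h)}$ when $\mathrm{dist}(z,\sigma(P_\varepsilon^{\mathrm{int}}))\ge h^{N_0}$, then integrates over a contour $\gamma$. Since $(P_\varepsilon^{\mathrm{ext}}-z)^{-1}$ is holomorphic in the region, only the $(P_\varepsilon^{\mathrm{int}}-z)^{-1}\chi_0$ term survives, giving $\pi_{\varepsilon,\gamma}=\pi^{\mathrm{int}}_{\varepsilon,\gamma}\chi_0+{\mathcal O}(1)e^{-\varepsilon/{\mathcal O}(h)}$, and exponentially weighted estimates give $\pi^{\mathrm{int}}_{\varepsilon,\gamma}\chi_0\approx\pi^{\mathrm{int}}_{\varepsilon,\gamma}$; the conclusion $\mathrm{rank}\,\pi_{\varepsilon,\gamma}=\mathrm{rank}\,\pi^{\mathrm{int}}_{\varepsilon,\gamma}$ follows from the standard fact that projections at operator distance $<1$ have equal rank, and shrinking $\gamma$ to small circles and increasing $N_0$ yields $b(\mu)-\mu={\mathcal O}(h^\infty)$. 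Your Grushin--Rouch\'e argument is a legitimate alternative; both pivot on the same two ingredients, namely Agmon decay of the $e_k$ off $\mathscr{U}_\varepsilon$ and the disjointness of $\mathrm{supp}\,W$ from $\mathscr{U}_\varepsilon$ as encoded in \eqref{peps.7}--\eqref{peps.8}. A small correction: your displayed identity for $E_{-+}-E_{-+}^{\mathrm{int}}$ is not quite standard. From the second resolvent identity for the block operators ${\mathcal P}={\mathcal P}^{\mathrm{int}}-\mathrm{diag}(W,0)$ one gets $E_{-+}-E_{-+}^{\mathrm{int}}=E_-WE_+^{\mathrm{int}}$, which (unlike $(P_\varepsilon-z)^{-1}$) is well-defined and holomorphic throughout $D(\mu_0,r_0)$, and is still ${\mathcal O}(e^{-\varepsilon/{\mathcal O}(h)})$ for the same reasons; this matters because Rouch\'e needs holomorphy inside the disk, not only a bound on the boundary circle. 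The trade-off between the two routes is mostly one of taste: the paper's projection comparison sidesteps the bookkeeping required to show that the $L^2$-orthonormal $e_k$ give bounded, almost-orthonormal $R_\pm$ between the undeformed and deformed spaces simultaneously, while the Grushin route makes the polynomial-to-exponential gain in the matching $b(\mu)-\mu$ more explicit at the level of the effective Hamiltonian.
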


\begin{proof}
It will be convenient to work with a different approximation of
$(P_\varepsilon -z)^{-1}$. Let $\chi _0\in C_0^\infty ({\mathbb{R}}^n)\cap
{S}_\varepsilon (1)$ have the property that for some small fixed $r$:
$$
\chi _0=\begin{cases}1\hbox{ on }B_{d_\varepsilon }({\mathscr{U}}_\varepsilon ,r),\\
0 \hbox{ on }B_{d_\varepsilon }({\mathscr{S}}_\varepsilon ,r)
\end{cases} .
$$
As a new approximation we take
\begin{equation}\label{peps.15}
  R_0(z)=(P_\varepsilon ^{\mathrm{int}}-z)^{-1}\chi _0+(P_\varepsilon ^{\mathrm{ext}}-z)^{-1}(1-\chi _0),
\end{equation}
which satisfies the estimate
  \begin{equation}\label{peps.15.5}
R_0(z)={\mathcal{O}}(\frac{1}{\delta }):\ H(\Lambda _{tG})\longrightarrow H(\Lambda
_{tG};\widetilde{r}_\varepsilon ^2)\cap H(\Lambda _{tG}).
  \end{equation}

\par\smallskip Then
\begin{equation}\label{peps.16}
\begin{aligned}
(P_\varepsilon -z)R_0(z) &=1-W(P_\varepsilon^{\mathrm{int}}-z)^{-1}\chi _0
-\widehat{\chi }_{{\mathscr{U}}_\varepsilon }
(P_\varepsilon^{\mathrm{ext}}-z)^{-1}(1-\chi _0)\\
&=:1-K,
\end{aligned}
\end{equation}
where (the new) $K$ satisfies (\ref{peps.11}), if we assume that
$\delta \ge h^{N_0}$ for some fixed
$N_0>0$. Then,
\begin{equation}\label{peps.17}
 \begin{aligned}
 (P_\varepsilon -z)^{-1} &=R_0(1-K)^{-1}=R_0+L\\
& ={\mathcal{O}}\left(\frac{1}{\delta} \right):H(\Lambda _{tG})
\longrightarrow H(\Lambda _{tG};\widetilde{r}_\varepsilon
^2)\cap H(\Lambda _{tG})
\end{aligned}
\end{equation}
with
\begin{equation}\label{peps.18}
L={\mathcal{O}}(1)e^{-\frac{\varepsilon}{\mathcal{O}(h)}} :\ H(\Lambda _{tG})
\longrightarrow H(\Lambda _{tG};\widetilde{r}_\varepsilon ^2).
\end{equation}

\par\smallskip 
If $\gamma \subset \Omega $ is a simple closed contour of uniformly
bounded length, along which $\delta \ge h^{N_0}$, we get from
(\ref{peps.15}), (\ref{peps.17}), (\ref{peps.18}):
\begin{equation}\label{peps.19}
\pi _{\varepsilon ,\gamma }=\pi ^{\mathrm{int}}_{\varepsilon ,\gamma }\chi
_0+{\mathcal{O}(1)}e^{-\frac{\varepsilon}{\mathcal{O}(h)}} :\ H(\Lambda _{tG})\longrightarrow
H(\Lambda _{tG}),
\end{equation}
where
\[
  \begin{aligned}
    \pi _{\varepsilon ,\gamma }&=\frac{1}{2\pi i}\int_\gamma (z-P_\varepsilon)^{-1}\,dz,\\
    \pi^{\mathrm{int}} _{\varepsilon ,\gamma }&=\frac{1}{2\pi i}\int_\gamma (z-P^{\mathrm{int}}_\varepsilon )^{-1}\,dz,
  \end{aligned}
\]
are the spectral projections of $P_\varepsilon$, $P_\varepsilon^{\mathrm{int}}$ respectively, 
associated to the part of the spectra inside $\gamma$. By exponentially weighted estimates,
$$
\pi _{\varepsilon ,\gamma }^{\mathrm{int}}\chi _0-\pi _{\varepsilon ,\gamma
}^{\mathrm{int}}
={\mathcal{O}(1)}e^{-\frac{\varepsilon}{\mathcal{O}(h)}}
$$
and we conclude that
$$
\mathrm{rank\,}\pi _{\varepsilon ,\gamma }=\mathrm{rank\,}\pi _{\varepsilon
  ,\gamma }^{\mathrm{int}}.
$$
Hence $P_\varepsilon $ and $P_\varepsilon ^{\mathrm{int}}$ have the same
number of eigenvalues inside $\gamma $. Varying $\gamma $ and $N_0$,
we get the proposition.
\end{proof}

\section{Resolvents of other operators}\label{or}
\setcounter{equation}{0}

We start with the resolvent of $P$ that we realize as an operator from
$H(\Lambda _{tG};\widetilde{r}_\varepsilon ^2)$ to $H(\Lambda _{tG})$ with
the same $G$ as above. Then $P$ has discrete spectrum in the set
(\ref{pext.4}) and the eigenvalues are confined to the lower half
plane. They are the resonances that we want to study. Restricting now
the attention to the set
\begin{equation}\label{or.1}
-{\mathcal{O}}(\varepsilon )<\Re z<-\frac{\varepsilon}{C},\qquad  
-\frac{t}{C}\varepsilon<\Im z<{\mathcal{O}}(\varepsilon ),
\end{equation}
where $C\gg 1$ is large enough, we can adapt the discussion for
$P_\varepsilon $ to $P$. Using Proposition \ref{ltg1} rather than
Proposition \ref{ltg2}, we get
\begin{prop}\label{or1}
Let $0<t\ll 1$ and let $z$ vary in the set (\ref{or.1}). If $|\Im
z|\ge \delta \ge h^{N_0}$ for some fixed $N_0>0$, then $P-z:H(\Lambda
_{tG};\widetilde{r}_\varepsilon ^2)\longrightarrow H(\Lambda _{tG})$ is bijective
and
\begin{equation}\label{or.2}
(P-z)^{-1}={\mathcal{O}}\left( \frac{1}{\delta } \right):\ H(\Lambda
_{tG})\longrightarrow H(\Lambda _{tG};\widetilde{r}_\varepsilon ^2)
\cap H(\Lambda _{tG}).
\end{equation}
\end{prop}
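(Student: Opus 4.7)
The proof is a direct adaptation of the argument for Proposition \ref{peps1}, with Proposition \ref{ltg1} invoked in place of Proposition \ref{ltg2} and with $P$ in place of $P_\varepsilon $. The reason (\ref{or.1}) confines $\Re z$ to values at most $-\varepsilon /C$, rather than reaching up to $\varepsilon /C$ as in Proposition \ref{peps1}, is precisely that Proposition \ref{ltg1} requires $\Re p(\rho +itH_G(\rho ))\le -\varepsilon /\widetilde{C}+\rho ^2/(2C)$ in a neighborhood of the saddle; via (\ref{ltg.3}) this forces $\Re z$ to be at most $-\varepsilon /C$ for some $C\gg 1$.

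Following Sections \ref{pint}--\ref{peps}, I would first introduce two reference operators attached to $P$ itself: the ``filling the well'' operator
\[
P^{\mathrm{ext}}:=P+\widehat{\chi }_{{\mathscr{U}}_0},\qquad \widehat{\chi }_{{\mathscr{U}}_0}:=\chi _{{\mathscr{U}}_0}\,\mathrm{Op}_h\Big(\beta\, e^{-\xi ^2/(2\beta )}\Big)\,\chi _{{\mathscr{U}}_0},
\]
and the self-adjoint ``filling the sea'' operator $P^{\mathrm{int}}:=P+W$, where $W$, $\beta $ and $\chi _{{\mathscr{U}}_0}$ are constructed exactly as in Lemmas \ref{prep1} and \ref{prep2} and in Section \ref{pext}, but with $V$ and $({\mathscr{U}}_0,{\mathscr{S}}_0)$ in place of $V_\varepsilon $ and $({\mathscr{U}}_\varepsilon ,{\mathscr{S}}_\varepsilon )$. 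Proposition \ref{ltg1} together with the ellipticity argument of Section \ref{pext} shows that, for $z$ in (\ref{or.1}) and $0<t\ll 1$, $P^{\mathrm{ext}}-z:\,H(\Lambda _{tG};\widetilde{r}_\varepsilon ^2)\to H(\Lambda _{tG})$ is bijective with a uniformly bounded inverse, yielding the analogue of (\ref{pext.6}). Since $P^{\mathrm{int}}$ is self-adjoint, $\mathrm{dist\,}(z,\sigma (P^{\mathrm{int}}))\ge |\Im z|\ge \delta $ automatically, and the same arguments as in Proposition \ref{pint2} and the beginning of Section \ref{peps} deliver $(P^{\mathrm{int}}-z)^{-1}={\mathcal{O}}(1/\delta )$ on the relevant weighted spaces $H(\Lambda _f)$.

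I would then form the parametrix
\[
R_0(z):=(P^{\mathrm{ext}}-z)^{-1}+(P^{\mathrm{int}}-z)^{-1}\,\widehat{\chi }_{{\mathscr{U}}_0}\,(P^{\mathrm{ext}}-z)^{-1},
\]
as in (\ref{peps.9}), which satisfies $R_0(z)={\mathcal{O}}(1/\delta ):H(\Lambda _{tG})\to H(\Lambda _{tG};\widetilde{r}_\varepsilon ^2)\cap H(\Lambda _{tG})$. Using the resolvent identities $(P-z)(P^{\mathrm{ext}}-z)^{-1}=1-\widehat{\chi }_{{\mathscr{U}}_0}(P^{\mathrm{ext}}-z)^{-1}$ and $(P-z)(P^{\mathrm{int}}-z)^{-1}=1-W(P^{\mathrm{int}}-z)^{-1}$, one gets $(P-z)R_0(z)=1-K$ with $K=W(P^{\mathrm{int}}-z)^{-1}\widehat{\chi }_{{\mathscr{U}}_0}(P^{\mathrm{ext}}-z)^{-1}$. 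Since $\mathrm{supp\,}W$ lies near ${\mathscr{S}}_0$ and $\mathrm{supp\,}\widehat{\chi }_{{\mathscr{U}}_0}$ near ${\mathscr{U}}_0$, an Agmon-type weight $f$ with the properties (\ref{peps.5})--(\ref{peps.6}) produces $K={\mathcal{O}}(1)e^{-\varepsilon /{\mathcal{O}}(h)}$ on $H(\Lambda _{tG})$, exactly as in (\ref{peps.11}).

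Inverting $1-K$ by Neumann series then gives a right inverse $R_0(z)(1-K)^{-1}={\mathcal{O}}(1/\delta )$, and the Fredholm argument cited at the end of Section \ref{peps} (from \cite{HeSj86}) upgrades this to the two-sided inverse (\ref{or.2}). The main technical obstacle, and the only real departure from the proof of Proposition \ref{peps1}, is the verification of $\Lambda _{tG}$-ellipticity of $p^{\mathrm{ext}}-z$ in a neighborhood of $(0,0)$ for the range of $z$ in (\ref{or.1}); this is precisely what Proposition \ref{ltg1} is designed to deliver, at the cost of restricting $\Re z\le -\varepsilon /C$, and the remaining steps are a routine transcription of the $P_\varepsilon $-arguments.
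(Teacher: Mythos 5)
Your proposal is correct and takes the same route the paper indicates: the paper's entire proof is the single remark in Section~\ref{or} that one ``adapts the discussion for $P_\varepsilon$ to $P$, using Proposition~\ref{ltg1} rather than Proposition~\ref{ltg2}'', and you have accurately unpacked this — constructing the reference operators $P^{\mathrm{int}}$, $P^{\mathrm{ext}}$ for $P$ itself via the analogues of Lemmas~\ref{prep1}, \ref{prep2}, repeating the parametrix and Agmon-weight argument of Section~\ref{peps}, and correctly identifying both why the restriction $\Re z < -\varepsilon/C$ is forced by Proposition~\ref{ltg1} and why the self-adjointness of $P^{\mathrm{int}}$ lets the hypothesis ``$|\Im z|\geq\delta$'' replace ``$\mathrm{dist}(z,\sigma(P^{\mathrm{int}}))\geq\delta$''.
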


\par\smallskip 
In addition to $P_\varepsilon $ we need a reference operator with
two gaps in the spectrum near ${\mathbb{R}}$. 
Recall that $P_\varepsilon^{\mathrm{int}}$ has discrete spectrum in 
$]-{\mathcal{O}}(\varepsilon),E[$, $E=\frac{\varepsilon}{C}$,
and that we have Weyl asymptotics there by Proposition
\ref{pint1'}. In particular,
\begin{equation}\label{or.3}
\# \left(\sigma (P_\varepsilon ^{\mathrm{int}})\cap \Big(A\varepsilon 
+]-\frac{\delta\varepsilon}{2},\frac{\delta \varepsilon}{2}[\Big)\right)
={\mathcal{O}}(\delta \varepsilon h^{-n}),
\end{equation}
uniformly when $\varepsilon \le\varepsilon (\delta )$, $h\le h(\varepsilon
,\delta )$ ($t$ fixed) and $A$ varies in the interval defined by 
$-{\mathcal O}(\varepsilon )+\frac{\delta \varepsilon}{2}
\le A\varepsilon \le E-\frac{\delta \varepsilon}{2}
$.

\par\smallskip 
Let $\mu _1,...,\mu _N$ (with $N={\mathcal{O}}(\delta \varepsilon h^{-n})$) be
the eigenvalues of $P_\varepsilon ^{\mathrm{int}}$ in
$A\varepsilon +]-\frac{\delta \varepsilon}{2},\frac{\delta \varepsilon}{2}[$ 
and let $e_1,\ldots,e_N\in L^2({\mathbb{R}}^n)$ 
be a corresponding orthonormal family of eigenfunctions, so that
\begin{equation}\label{or.4}
\mathbf{1}_{A\varepsilon +]-\frac{\delta \varepsilon}{2},\frac{\delta \varepsilon}{2}[}
(P_\varepsilon^{\mathrm{int}})u=\sum_j \mu _j(u|e_j)e_j,
\end{equation}
where $(\cdot |\cdot)$ is the usual inner product in $L^2({\mathbb{R}}^n)$. 
We create a gap in the spectrum by moving each $\mu _j$ to
the closest of the two boundary points $A\varepsilon -\frac{\delta\varepsilon}{2}$,
$A\varepsilon +\frac{\delta\varepsilon}{2}$: Put
$$
\widetilde{\mu }_j=
\begin{cases}A\varepsilon -\frac{\delta \varepsilon}{2} 
\hbox{ if } \mu _j\le A\varepsilon ,\\
A\varepsilon +\frac{\delta \varepsilon}{2}
\hbox{ if } \mu _j> A\varepsilon 
\end{cases}
$$
and set
\begin{equation}\label{or.5}
\widetilde{P}_{\varepsilon ,A,\delta }^{\mathrm{int}}u=P_\varepsilon
^{\mathrm{int}}u
+\sum_j (\widetilde{\mu }_j-\mu _j)(u|e_j)e_j,
\end{equation}
so that the eigenvalues $\mu _j$ of $P_\varepsilon ^{\mathrm{int}}$ become
the eigenvalues $\widetilde{\mu }_j$ of $\widetilde{P}_{\varepsilon
  ,A,\delta }^{\mathrm{int}}$ while the eigenvalues of 
  $P_\varepsilon^{\mathrm{int}}$ outside 
  $A\varepsilon +]-\frac{\delta \varepsilon}{2},\frac{\delta\varepsilon}{2}[$ 
remain unchanged.

\par\smallskip 
Now we know that $e_j$ decay exponentially outside ${\mathscr{U}}_\varepsilon $,
as shown in the discussion around (\ref{peps.4}), so if $\chi
_{{\mathscr{U}}_\varepsilon }$ is the cutoff function in 
  (\ref{prep.43}), (\ref{pext.1}), (\ref{pext.4.5}), then
\begin{equation}\label{or.6}
\big\| \widetilde{P}_{\varepsilon ,A,\delta }^{\mathrm{int}}-
P_{\varepsilon ,A,\delta }^{\mathrm{int}}\big\|_{\mathrm{tr}}
={\mathcal{O}}(1)e^{-\frac{\varepsilon}{{\mathcal{O}}(h)}},
\end{equation}
where
\begin{equation}\label{or.7}
P_{\varepsilon ,A,\delta }^{\mathrm{int}}{\bullet}=P_\varepsilon
^{\mathrm{int}}{\bullet}
+\chi _{{\mathscr{U}}_\varepsilon }\sum_j (\widetilde{\mu }_j-\mu _j)(\chi
_{{\mathscr{U}}_\varepsilon }{\bullet}|e_j)e_j.
\end{equation}
From (\ref{or.6}) it follows that
\begin{equation}\label{or.8}
\sigma (P_{\varepsilon,A,\delta  }^{\mathrm{int}})\cap \left( A\varepsilon
  +\Big[-\frac{\varepsilon \delta}{3},\frac{\varepsilon \delta}{3}\Big] \right)= \emptyset . 
\end{equation}
Here ``3'' can be replaced by any number $>2$.

\par\smallskip 
Notice that we could have replaced the definition of $P_\varepsilon^{\mathrm{ext}}$ 
in \eqref{pext.4.5} by
$$
P_\varepsilon ^{\mathrm{ext}}{\bullet}=P_\varepsilon{\bullet} +\chi _{{\mathscr{U}}_\varepsilon }
\sum_{\scriptstyle\mu \in ]-\infty ,E[\cap \sigma (P_\varepsilon ^{\mathrm{int}})}(E-\mu)
(\chi _{{\mathscr{U}}_\varepsilon }\bullet |e_\mu )e_\mu ,
$$
where $e_\mu $ denotes the orthonormal system of eigenfunctions
associated to the $\mu \in ]-\infty ,E[\cap \sigma (P_\varepsilon ^{\mathrm{int}})$.

 \par\smallskip Now, put  
\begin{equation}\label{or.9}
P_{\varepsilon ,A,\delta }{\bullet}=P_\varepsilon{\bullet}
+\chi _{{\mathscr{U}}_\varepsilon }\sum_j (\widetilde{\mu }_j-\mu _j)(\chi
_{{\mathscr{U}}_\varepsilon }{\bullet} |e_j)e_j,
\end{equation}
acting on $H(\Lambda _{tG};\widetilde{r}_\varepsilon ^2)$. As in Section
\ref{peps}, if we restrict the attention to the region
(\ref{or.1}), there is a bijection
\begin{equation}\label{or.10}
b:\, \sigma (P_{\varepsilon ,A,\delta }^{\mathrm{int}})\longrightarrow
\mathrm{Res\,}(P_{\varepsilon ,A,\delta }),
\end{equation}
such that $b(\mu )-\mu ={\mathcal{O}}(h^\infty )$ (with the same proviso
as in the footnote to Proposition \ref{peps2}). In particular, for
every fixed $N_0>0$, $P_{\varepsilon ,A,\delta }$ has no resonances
outside an $h^{N_0}$-neighborhood of
$$
\Big]-{\mathcal{O}}(\varepsilon ),E'\Big[\setminus 
\left(A\varepsilon +\Big]-\frac{\varepsilon \delta}{2},
\frac{\varepsilon \delta}{2}\Big[\right)
$$ 
when $h$ is small. Outside such a
neighborhood in the set (\ref{or.1}), we have
\begin{equation}\label{or.11}
\Big\|(P_{\varepsilon ,A,\delta }-z)^{-1}\Big\|\le 
{\mathcal O}(1)\left(\mathrm{dist\,}\Big(z,{\mathbb{R}}\setminus \big(A\varepsilon 
  +]-\frac{\varepsilon\delta}{2},\frac{\varepsilon \delta}{2}[\,\big)\Big)\right)^{-1}
\end{equation}
as a bounded operator in $H(\Lambda _{tG})$.

\par\smallskip 
In the same way, we can build reference operators with two
gaps. Let $B\in {\mathbb{R}}$ be a second energy level as in (\ref{or.3})
and assume in addition that $B-A\ge \delta $. We first define
$\widetilde{P}_{\varepsilon ,A,B,\delta }^{\mathrm{int}}$ as in
(\ref{or.5}) by replacing each eigenvalue $\mu_j$ of 
$P_\varepsilon^{\mathrm{int}}$ in 
$\{ A\varepsilon,B\varepsilon \}+]-\frac{\delta \varepsilon}{2},\frac{\delta \varepsilon}{2}[$ 
by the closest boundary point $\widetilde{\mu }_j$ of this
set. Then we define $P_{\varepsilon ,A,B,\delta }^{\mathrm{int}}$ as in
(\ref{or.7}). We have the obvious modifications of the bijection in
(\ref{or.10}) and the resolvent estimate (\ref{or.11}). Let us also
notice that
\begin{equation}\label{or.12}
\begin{aligned}
\# \Big(\mathrm{Res\,}(P_{\varepsilon ,A,B,\delta })&\cap 
\big(\, \big]A\varepsilon,B\varepsilon\big[+i\big]-\frac{\varepsilon}{{\mathcal{O}}(1)},{\mathcal{O}}(\varepsilon)\big[\, \big)\Big)\\ 
&=\# \Big(\sigma (P_{\varepsilon ,A,B,\delta }^{\mathrm{int}})\cap \big]A\varepsilon,B\varepsilon\big[ \,\Big) \\
&=\big(\omega (\varepsilon B)-\omega (\varepsilon A)\big)({2\pi h})^{-n}+{\mathcal{O}}(\delta \varepsilon )h^{-n}, 
\end{aligned}
\end{equation}
where the volume function $\omega $ is discussed in Appendix
\ref{app}. This estimate is uniform for $0<\delta \ll 1$, $0<\varepsilon
\le \varepsilon (\delta )$, $0<h\le h(\delta,\varepsilon)$. ($0<t\ll 1$ is fixed.)

\section{Relative determinants}\label{det}
\setcounter{equation}{0}

Fix $t>0$ small so that the earlier estimates are valid
in
\begin{equation}\label{det.1}
R=R_{C_1,C}(\varepsilon )=\big]-{\mathcal{O}}(\varepsilon ),\frac{\varepsilon}{C}\big[
+i\big]-\frac{\varepsilon}{C_1},{\mathcal{O}}(\varepsilon )\big[
\end{equation}
or in certain explicitly given subsets of this region. Here $C$,
${\mathcal{O}}(\varepsilon )$ are as in (\ref{or.1}) and $C_1>0$
is large enough, depending on $t$.\footnote{\label{tparameter} Notice that if we put 
{$\varepsilon_\mathrm{new}=\frac{\varepsilon}{\widetilde{C}}$} for $\widetilde{C}$ fixed
large enough, then the set $R$ will contain a rectangle of the form
$]-{\cal O}(\varepsilon _\mathrm{new}),\varepsilon
_\mathrm{new}[+i]-\varepsilon _\mathrm{new}, \varepsilon
_\mathrm{new}[$ and we recover the scales in Theorem \ref{int1} with
$\varepsilon $ replaced by the rescaled $\varepsilon_\mathrm{new} $.
}

\par\smallskip In (\ref{pext.6}) we have seen that uniformly for $z\in R$,
$$
(P_\varepsilon ^{\mathrm{ext}}-z)^{-1}=\left\{\begin{aligned}
{\mathcal{O}}(1) &:\, H(\Lambda _{tG}) \longrightarrow H(\Lambda_{tG};\widetilde{r}_\varepsilon ^2),\\
{\mathcal{O}}(\frac{1}{\varepsilon}) &:\, H(\Lambda _{tG}) \longrightarrow H(\Lambda_{tG}).
\end{aligned}\right.
$$

\par\smallskip 
From the definition of $P_\varepsilon $ in the beginning of Section
\ref{prep} we see that
\begin{equation}\label{det.2}
\big\|P-P_\varepsilon \big\|_{\mathrm{tr}}={\mathcal{O}}(1)\varepsilon \big(\frac{\varepsilon}{h}\big)^n.
\end{equation}
{ In fact, from \eqref{bp.3} we see that the $h$-quantization of
$\chi_{\varepsilon}$ is unitarily equivalent to the $h=1$-quantization of $a_{\varepsilon,h}(x,\xi)=\varepsilon \chi\Big(\sqrt{\frac{h}{\varepsilon}}(x,\xi)\Big)$. Recalling that $\chi\in\mathcal{S}$
and that $0<\frac{h}{\varepsilon}\leq 1$, we have
$$
\sum_{|\alpha|\leq 2n+1}\Vert \partial_{x,\xi}^{\alpha}a_{\varepsilon,h}\Vert_{L^1(\mathbb{R}^{2n})}
\leq \mathcal{O}\Big(\varepsilon\big(\frac{\varepsilon}{h}\big)^n\Big)
$$
and applying for instance \cite[Theorem 9.4]{DiSj99}, we get $\Vert \chi_{\varepsilon}(x,hD_x)\Vert_{\mathrm{tr}}\leq \mathcal{O}
{\Big(\varepsilon\big(\frac{\varepsilon}{h}\big)^n\Big)}$
.}

Similarly from the definition of $P_\varepsilon ^{\mathrm{ext}}$ in
(\ref{pext.1}), (\ref{pext.4.5}), we have
\begin{equation}\label{det.255}
\big\|P_\varepsilon -P^{\mathrm{ext}}_\varepsilon \big\|_{\mathrm{tr}}={\mathcal{O}}(1)h^{-n}.
\end{equation}
Thus,
\begin{equation}\label{det.3}
\big\|P -P^{\mathrm{ext}}_\varepsilon \big\|_{\mathrm{tr}}={\mathcal{O}}(1)h^{-n}.
\end{equation}

\par\smallskip 
We can define the following relative determinants and their
logarithms for $z\in R$:
\begin{equation}\label{det.5}
{\mathcal D}_{P}(z)=\ln |\det (P-z)(P_\varepsilon ^{\mathrm{ext}}-z)^{-1}|,
\end{equation}
\begin{equation}\label{det.6}
{\mathcal D}_{P_\varepsilon }(z)=\ln |\det (P_\varepsilon -z)(P_\varepsilon ^{\mathrm{ext}}-z)^{-1}|,
\end{equation}
\begin{equation}\label{det.6.5}
{\mathcal D}_{P_{\varepsilon ,\delta }}(z)=\ln |\det (P_{\varepsilon ,\delta }-z)(P_\varepsilon ^{\mathrm{ext}}-z)^{-1}|.
\end{equation}
Here $P_{\varepsilon ,\delta }=P_{\varepsilon ,A,B,\delta }$ is given in Section
\ref{or}. We derive some upper bounds:

\par\smallskip 
Write
$$
(P-z)(P_\varepsilon ^{\mathrm{ext}}-z)^{-1}=1-(P_\varepsilon
^{\mathrm{ext}}-P)(P_\varepsilon ^{\mathrm{ext}}-z)^{-1}.
$$
The last term is of trace class, so ${\mathcal D}_P$ is well defined. More
precisely,
$$
\big\| (P_\varepsilon -P)(P_\varepsilon ^{\mathrm{ext}}-z)^{-1}\big\|_{\mathrm{tr}}
\le \big\| P_\varepsilon ^{\mathrm{ext}}-P\big\|_{\mathrm{tr}}\big\| (P_\varepsilon ^{\mathrm{ext}}-z)^{-1}\big\|
\le {\mathcal{O}}({\varepsilon}^{-1} )h^{-n},
$$
where the norms and trace class norms are the ones for operators in
$H(\Lambda _{tG})$. Since in general (see \cite{GoKr69}),
$$
|\det (1+K)|\le \exp \big\|K\big\|_{\mathrm{tr}},
$$
we conclude that
\begin{equation}\label{det.6.55}
{\mathcal D}_P(z)\le {\mathcal{O}}(1)\varepsilon ^{-1}h^{-n}.
\end{equation}
Similarly, $$\big\| P_\varepsilon ^{\mathrm{ext}}-P_\varepsilon
\big\|_{\mathrm{tr}},\ \big\| P_\varepsilon ^{\mathrm{ext}}-P_{\varepsilon ,\delta }
\big\|_{\mathrm{tr}}={\mathcal{O}}(h^{-n}),$$
so ${\mathcal P}_{P_\varepsilon }(z)$, ${\mathcal D}_{P_{\varepsilon ,\delta }}(z)$
are well defined and satisfy
\begin{equation}\label{det.7}
{\mathcal P}_{P_\varepsilon }(z),\ {\mathcal D}_{P_{\varepsilon ,\delta }}(z)
\le {\mathcal{O}}(1)\varepsilon ^{-1}h^{-n}.
\end{equation}

\par\smallskip Next, look at
\begin{multline}\label{det.8}
{\mathcal D}_P(z)-{\mathcal D}_{P_\varepsilon }(z)=\ln |\det (P-z)(P_\varepsilon
-z)^{-1}|\\ =\ln |\det (1-(P_\varepsilon -P)(P_\varepsilon
-z)^{-1})|
\end{multline}
which is well defined away from $\sigma (P_\varepsilon )$ and bounded
from above by
\begin{equation}\label{det.9}
 \begin{aligned}
 \big\| (P_\varepsilon -P)(P_\varepsilon-z)^{-1})\big\|_{\mathrm{tr}} 
 &\le \big\| P_\varepsilon -P\big\|_{\mathrm{tr}}\big\|(P_\varepsilon -z)^{-1}\big\|\\
&\le {\mathcal{O}}(1)\varepsilon^{n+1}h^{-n}\big\| (P_\varepsilon -z)^{-1}\big\| .
 \end{aligned}
\end{equation}
We know from (\ref{peps.17}), that
\begin{equation}\label{det.10}
\big\| (P_\varepsilon -z)^{-1}\big\|\le \frac{{\mathcal{O}}(1)}{\varepsilon \delta
},\hbox{ for }z\in R \, \hbox{ with } \, |\Im z|>\delta \varepsilon .
\end{equation}
This is uniform for $0<\varepsilon \le \varepsilon (\delta )\ll 1$, $0<h\le
h(\delta,\varepsilon)$. From this and (\ref{det.9}), (\ref{det.8}), we
get the upper bound,
\begin{equation}\label{det.11}
{\mathcal D}_P(z)-{\mathcal D}_{P_\varepsilon }(z)\le {\mathcal{O}}(1)\frac{1}{\delta
}\left(\frac{\varepsilon }{h} \right)^n, \hbox{ for }z\in R \, \hbox{ with } \, |\Im z|>\delta \varepsilon .
\end{equation}

\par\smallskip We also have lower bounds in a smaller part of $R$. From
Proposition \ref{or1} we know that 
\begin{equation}\label{det.12}
\big\| (P-z)^{-1}\big\|\le \frac{{\mathcal{O}}(1)}{\delta \varepsilon },
\end{equation}
for
\begin{equation}\label{det.13}
z\in R \hbox{ with } |\Im z|>\delta \varepsilon \hbox{ and } \Re z<-\frac{\varepsilon}{{\mathcal{O}}(1)},
\end{equation}
where the upper bound on $\Re z$ is the same as in (\ref{or.1}).
Exchanging $P$ and $P_\varepsilon $ in (\ref{det.8}), 
\begin{equation}\label{det.14}
{\mathcal D}_{P_\varepsilon }(z)-{\mathcal D}_P(z)=\ln |\det (1-(P-P_\varepsilon )(P-z)^{-1}|,
\end{equation}
we then get, for $z$ in the subset (\ref{det.13}),
\begin{equation}\label{det.15}
{\mathcal D}_P(z)-{\mathcal D}_{P_\varepsilon }(z)\ge -{\mathcal{O}}(1)\frac{1}{\delta
}\left(\frac{\varepsilon }{h} \right)^n.
\end{equation}

\par\smallskip 
We shall apply Jensen's formula and related estimates,
following \cite[Section 5]{Sj01}. Assume for simplicity that we have 
$-\frac{\varepsilon}{{\mathcal{O}}(1)}=-\frac{\varepsilon}{2}$ in (\ref{det.13}). 
Let
\begin{equation}\label{det.16}
z_0=-\varepsilon -i\frac{\varepsilon }{2C_1},
\end{equation}
where $C_1$ is the constant in (\ref{det.1}), so that (\ref{det.15})
holds for $z=z_0$. (The following can also be carried out in the upper
half-plane with $z_0=-\varepsilon +i\frac{\varepsilon }{2C_1}$.) It will be
convenient to work in the rescaled variable $\widetilde{z}$ with
$z=\varepsilon\,\widetilde{z}$, so that
\begin{equation}\label{det.18}
\widetilde{z}_0=-1-i\frac{1}{2C_1},
\end{equation}
and we put $\widetilde{{\mathcal D}}_P(\widetilde{z})={\mathcal D}_P(z)$ and
similarly for the other ${\mathcal D}_{P_{(...)}}$. Let $r_0=\varepsilon\,
\widetilde{r}_0$ be the largest number such that
\begin{equation}\label{det.19}
D(z_0,r_0)\subset R_\delta :=\Big\{ z\in R;\, \Im z<-\delta \varepsilon \Big\}.
\end{equation}
(More explicitly, $r_0=(\frac{1}{2C_1}-\delta )\varepsilon $.) Consider the
holomorphic function $\widetilde{f}(\widetilde{z})=f(z)$,
\begin{equation}\label{det.20}
f(z)=\det \left((P-z)(P_\varepsilon -z)^{-1} \right),
\end{equation}
for $z\in D(z_0,r_0)$ (corresponding to $\widetilde{z}\in
D(\widetilde{z}_0,\widetilde{r}_0)$). By (\ref{det.11}), we have 
\begin{equation}\label{det.21}
|f(z)|\le \exp \left( {\mathcal{O}}(1)\frac{1}{\delta }
  \left(\frac{\varepsilon }{h} \right)^n \right),\qquad z\in D(z_0,r_0).
\end{equation}
Moreover,
\begin{equation}\label{det.22}
|f(z_0)|\ge \exp \left(-{\mathcal{O}}(1)\frac{1}{\delta }
  \left(\frac{\varepsilon }{h} \right)^n \right),
\end{equation}
since (\ref{det.15}) holds for $z=z_0$. From Jensen's formula it
follows that the number of zeros of $\widetilde{f}$ in
$D(\widetilde{z}_0,(1-\theta )\widetilde{r}_0)$ is $\le {\mathcal{O}}_\delta (1)\varepsilon^n h^{-n}$, 
if $\theta \in ]0,1[$ is any fixed constant. 
Equivalently, $f$ has $\le {\mathcal{O}}_\delta (1) \varepsilon^n
h^{-n}$ zeros in $D(z_0,(1-\theta )r_0)$.

\par\smallskip 
Let $z_j=\varepsilon\, \widetilde{z}_j$, $j=1,2,\ldots,N$ be the zeros of
$f$ in $D(z_0,(1-\theta )r_0)$, repeated according to their
multiplicity, and put
$$
D_w(z;h)=\prod_{j=1}^N (\widetilde{z}-\widetilde{z}_j),\qquad  z=\varepsilon\, \widetilde{z}.
$$
Repeating the (standard) arguments in \cite{Sj01}, we see that
\begin{equation}\label{det.23}
|D_w(z;h)|\le \exp \Big( {\mathcal{O}}_\delta (1) \varepsilon^n h^{-n} \Big) 
\, \hbox{ in } \, D(z_0,(1-\theta )r_0)
\end{equation}
and that for any interval $I\in \big[0,(1-\theta )\widetilde{r}_0\big[$ of
length $|I|>0$ there exists $\widetilde{r}_1\in I$ such that
\begin{equation}\label{det.24}
\hskip-10pt |D_w(z;h)|\ge \exp \left(-{\mathcal{O}}_{\delta,|I|} (1) 
\Big(\frac{\varepsilon}{h}\Big)^{n}
\right)\hbox{ when }|z-z_0|=r_1:=\varepsilon \widetilde{r}_1.
\end{equation}

\par\smallskip 
Next, write
\begin{equation}\label{det.25}
f(z)=e^{G(z)}D_w(z;h),
\end{equation}
with $G$ holomorphic in $D\big(z_0,(1-\theta )r_0\big)$. Using the above
bounds and Harnack's inequality (as in \cite[Section\ 5]{Sj01}) we get
\begin{equation}\label{det.26}
|G(z)|\le {\mathcal{O}}_\delta (1) \varepsilon^n h^{-n},\quad z\in D(z_0,(1-\theta )^2r_0)
\end{equation}
and for any interval $I\in \big[0,(1-\theta )^2\widetilde{r}_0\big[$ of length
$|I|>0$ there exists $\widetilde{r}_1\in I$ such that
\begin{equation}\label{det.27}
|f(z)|\ge \exp \left(-{\mathcal{O}}_{\delta ,|I|}(1)\varepsilon^n h^{-n}
\right)\hbox{ when }|z-z_0|=r_1:=\varepsilon \widetilde{r}_1.
\end{equation}
In other words,
\begin{equation}\label{det.28}
{\mathcal D}_P-{\mathcal D}_{P_\varepsilon }\ge -{\mathcal{O}}_{\delta ,|I|}(1)\varepsilon^n h^{-n},
\end{equation}
for $z$ as in (\ref{det.27}). Here we can take $|I|=(1-\theta
)^2\widetilde{r}_0-(1-\theta )^3\widetilde{r}_0$ and find a
corresponding $r_1$ with $(1-\theta )^3r_0\le r_1<(1-\theta )^2r_0$.

\par\smallskip The argument can now be repeated, by replacing $z_0$ by any new
point on $\partial D(z_0,r_1)$ ... In this way, we continue until we
have covered $R_\delta \setminus (\partial R_\delta +D(0,\delta
\varepsilon ))$ with ${\mathcal{O}}_\delta (1)$ discs, and recalling that the
zeros of $f$ in (\ref{det.20}) are the resonances of $P$ in $R_\delta
$, we get the following result:
\begin{prop}\label{det1}
Define $R_\delta $ as in (\ref{det.19}), (\ref{det.1}). Then 

\medskip\par\noindent (A) The number of resonances in $R_\delta
\setminus (\partial R_\delta +D(0,\varepsilon \delta ))$ is $\le {\mathcal
  O}_\delta (1)\varepsilon^n h^{-n}$ with the usual convention that
$0<\varepsilon \le \varepsilon (\delta )$, $0<h\le h(\delta,\varepsilon)$.

\medskip\par\noindent (B) For all $a,b$ with $0<a<b<1$ independent of
$\varepsilon ,\delta $ and all segments $J\subset I\subset R_\delta
\setminus (\partial R_\delta +D(0,\varepsilon \delta ))$ of lengths
$|J|=a\varepsilon $, $|I|=b\varepsilon $, there exists $z\in J$ such that
\begin{equation}\label{det.29}
{\mathcal D}_P(z)-{\mathcal D}_{P_\varepsilon }(z)\ge -{\mathcal{O}}_{a,b,\delta
}(1) \varepsilon^n h^{-n}.
\end{equation}
\end{prop}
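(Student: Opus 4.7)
\medskip\noindent\textbf{Proof proposal.}
The plan is to iterate the one-disc argument just set up before the statement, chaining overlapping discs of radius $\asymp\varepsilon$ until they cover $R_\delta\setminus(\partial R_\delta+D(0,\varepsilon\delta))$. The key input, already in place, is that whenever a point $z_0\in R_\delta$ satisfies $\mathcal{D}_P(z_0)-\mathcal{D}_{P_\varepsilon}(z_0)\ge -\mathcal{O}_\delta(1)\varepsilon^n h^{-n}$, the upper bound \eqref{det.11} together with Jensen's formula forces the function $f$ of \eqref{det.20} to have at most $\mathcal{O}_\delta(1)\varepsilon^n h^{-n}$ zeros in $D(z_0,(1-\theta)r_0)$; and the factorisation $f=e^G D_w$ combined with \eqref{det.23}--\eqref{det.27} produces a radius $r_1\asymp\varepsilon$, freely selectable from any prescribed sub-interval of positive relative length, on which the whole circle $|z-z_0|=r_1$ carries the same lower bound.

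To complete (A), I would pick any $z_1\in\partial D(z_0,r_1)$ where the circular lower bound holds and rerun the scheme with $z_1$ as new base point; iterating and using that $R_\delta\setminus(\partial R_\delta+D(0,\varepsilon\delta))$ has diameter $\mathcal{O}(\varepsilon)$, only $\mathcal{O}_\delta(1)$ such discs are needed to cover it. By Proposition \ref{peps2} the resonances of $P_\varepsilon$ sit within $\mathcal{O}(h^\infty)$ of the real axis, whereas $R_\delta$ lies at distance $\ge\delta\varepsilon$ from it, so that in the covered region the zeros of $f$ are exactly the resonances of $P$. Summing the local counts \eqref{det.23} over the chain yields the required bound $\mathcal{O}_\delta(1)\varepsilon^n h^{-n}$.

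For (B), given segments $J\subset I$ with $|J|=a\varepsilon$, $|I|=b\varepsilon$, I would chain discs up to a last step whose base point $z_\star$ is placed at distance $\asymp\varepsilon$ from $J$ on the perpendicular to $I$ through the midpoint of $J$, so that the concentric circles $|z-z_\star|=r$, for $r$ ranging over a sub-interval of relative length $\asymp(b-a)/b$, all cut across $J$. The freedom to choose the good radius inside any such sub-interval, granted by \eqref{det.27}, then yields a circle meeting $J$ on which $\mathcal{D}_P-\mathcal{D}_{P_\varepsilon}\ge -\mathcal{O}_{a,b,\delta}(1)\varepsilon^n h^{-n}$; its intersection with $J$ is the sought-after point $z\in J$.

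The main obstacle is the bookkeeping through the chain: the implicit constant in \eqref{det.27} worsens at each iteration because Harnack's inequality and the Jensen-type counting are applied on successively shrunk sub-discs, and it also depends on the relative length of the admissible sub-interval of radii. One must therefore fix in advance a shrink factor $\theta\in\,]0,1[$ and a fixed relative radius-length, and verify that after the $\mathcal{O}_\delta(1)$ iterations needed to cover $R_\delta\setminus(\partial R_\delta+D(0,\varepsilon\delta))$ the accumulated constant is still of the form $\mathcal{O}_{a,b,\delta}(1)$ claimed in \eqref{det.29}.
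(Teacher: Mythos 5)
Your proposal is correct and follows essentially the same route as the paper: the proof is precisely the chaining of the single-disc Jensen/Harnack estimate across $\mathcal{O}_\delta(1)$ discs covering $R_\delta\setminus(\partial R_\delta+D(0,\varepsilon\delta))$, using that the zeros of $f$ there are exactly the resonances of $P$. Your concern about constants accumulating through the chain is resolved exactly as you indicate, since the number of steps is $\mathcal{O}_\delta(1)$.
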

Notice that Proposition \ref{det1} remains valid if we replace
$R_\delta $, defined in (\ref{det.19}), with 
\begin{equation}\label{det.30}
R_\delta ^+=\{ z\in R;\, \Im z >\varepsilon \delta  \}.
\end{equation}
Then \textit{(A)} holds trivially since there are no resonances in the
upper half-plane.

We recall the bounds (\ref{det.11}), (\ref{det.15}). In order to
simplify the notations, we assume that the proposition is valid in all
of $R_\delta $ (and in $R_\delta ^+$), as can be achieved by a slight
dilation of the parameters.

In order to complete the proof of Theorem \ref{int1}, we shall work
with ${\mathcal D}_{P}(z)-{\mathcal D}_{P_{\varepsilon ,\delta }}(z)$, exploiting
the fact that ${\mathcal D}_{P_{\varepsilon ,\delta }}$ is harmonic in
\begin{equation}\label{det.31}
\hskip-15pt R_{\varepsilon,\delta}:=\{ z\in R;\, |\Im z|>\varepsilon \delta \hbox{ or }|\Re z-A\varepsilon
|<\frac{\varepsilon \delta }{4}\hbox{ or }
|\Re z-B\varepsilon
|<\frac{\varepsilon \delta }{4}
 \}.
\end{equation} 
See Figure \ref{Repsdelta}.
Here $P_{\varepsilon ,\delta }=P_{\varepsilon ,A,B,\delta }$ is discussed in
Section \ref{or} and we know from that discussion that 
$$
\big\| P_\varepsilon -P_{\varepsilon,\delta }\big\|\le {\mathcal{O}}(\varepsilon \delta ),
\qquad \big\| P_\varepsilon -P_{\varepsilon,\delta }\big\|_{\mathrm{tr}}\le 
{\mathcal{O}}(\varepsilon \delta )\frac{\varepsilon\delta}{h^n}.
$$ 
For $z$ in the region $R_{\varepsilon,\delta}$ we have
\begin{equation}\label{det.32}
\begin{aligned}
 {\mathcal D}_{P_\varepsilon }(z)-{\mathcal D}_{P_{\varepsilon ,\delta }}(z) & =\ln
\left| \det (P_\varepsilon -z)(P_{\varepsilon ,\delta }-z)^{-1} \right| \\
& \le \big\| (P_\varepsilon -P_{\varepsilon ,\delta })(P_{\varepsilon ,\delta}-z)^{-1}\big\|_{\mathrm{tr}} \\
& \le \big\| P_\varepsilon -P_{\varepsilon ,\delta }\big\|_{\mathrm{tr}}\big\|(P_{\varepsilon ,\delta}-z)^{-1}\big\| \\
& \le {\mathcal{O}}(1)\frac{(\varepsilon \delta )^2}{h^n}\frac{1}{\varepsilon\delta } \\
& ={\mathcal{O}}(1)\frac{\varepsilon \delta }{h^n}. 
\end{aligned}
\end{equation}
Here we also use (\ref{or.11}) or rather its natural analogue for
$P_{\varepsilon ,A,B,\delta }$.

\par\smallskip 
Since $\big\| P-P_\varepsilon \big\|_{\mathrm{tr}}\le {\mathcal{O}}(1)\varepsilon^{n+1} h^{-n}
\le {\mathcal{O}}(1)(\varepsilon \delta )^2h^{-n}$ when
$0<\varepsilon \le \varepsilon (\delta )$, we have
\begin{equation}\label{det.33}
{\mathcal D}_P-{\mathcal D}_{P_{\varepsilon ,\delta }}\le {\mathcal{O}}(\varepsilon
\delta )h^{-n}
\end{equation}
in (\ref{det.31}). Indeed, this follows from (\ref{det.32}) after
replacing $P_\varepsilon $ there with $P$.

\par\smallskip 
On the smaller set
\begin{equation}\label{det.34}
R_\delta:=\{z\in R;\, |\Im z| >\varepsilon \delta  \},
\end{equation}
we have
\begin{equation}\label{det.35}
\big\| (P_\varepsilon -z)^{-1}\big\|\le \frac{{\mathcal{O}}(1)}{\delta \varepsilon }
\end{equation}
and exchanging $P_\varepsilon $ and $P_{\varepsilon ,\delta }$ in
(\ref{det.32}), we get ${\mathcal D}_{P_\varepsilon }-{\mathcal D}_{P_{\varepsilon,\delta }}
\ge -{\mathcal{O}}(\varepsilon \delta )h^{-n}$, hence with
(\ref{det.32}):
\begin{equation}\label{det.36}
\left| {\mathcal D}_{P_\varepsilon }-{\mathcal D}_{P_{\varepsilon
    ,\delta }} \right| \le {\mathcal{O}}(1)\frac{\varepsilon \delta }{h^n},
\end{equation}
for $z$ in the set (\ref{det.34}). This means that the estimates \eqref{det.11}, \eqref{det.29} for
${\mathcal D}_P-{\mathcal D}_{P_\varepsilon }$ carry over to ${\mathcal D}_P-{\mathcal
  D}_{P_{\varepsilon ,\delta } }$, provided that we replace the remainder
estimates ${\mathcal{O}}_{\cdots}(1)\varepsilon^n h^{-n}$ by ${\mathcal{O}}(\varepsilon
\delta )h^{-n}$: For $ z\in R_\delta$, we have statement \textit{(B)} in Proposition \ref{det1} with
\begin{equation}\label{det.37}
{\mathcal D}_P(z)-{\mathcal D}_{P_{\varepsilon ,\delta }}(z)\ge -{\mathcal{O}}(\varepsilon \delta )h^{-n},
\end{equation}
instead of (\ref{det.29}).

\par\smallskip To get lower bounds in $A\varepsilon +]-\frac{\varepsilon \delta}{4},\frac{\varepsilon
\delta}{4}[+i]-\varepsilon \delta ,\varepsilon \delta [$ (and similarly with
$B$ instead of $A$, we can apply the above arguments for ${\mathcal D}_P-{\cal
  D}_{P_\varepsilon }$ in $R_\delta $ to ${\mathcal D}_P-{\cal
  D}_{P_{\varepsilon ,\delta }}$ in
$A\varepsilon +]-\frac{\varepsilon \delta}{4},\frac{\varepsilon
\delta}{4}[+i]-\varepsilon \delta ,\varepsilon \delta [$, now starting at the point
$z_0=A\varepsilon + i 2\varepsilon \delta $ and get:
\begin{prop}\label{det2}
\begin{itemize}
 \item[{}] {}
   
  \item[(A)] The number of resonances of $P$ in
  $A\varepsilon +]-\frac{\varepsilon \delta}{4},\frac{\varepsilon\delta}{4}[
  +i]-\varepsilon \delta ,\varepsilon \delta [$ $($i.e.\ the zeros of 
  $\det \left( (P-z)(P_{\varepsilon ,\delta }-z)^{-1} \right)$ $)$ is 
  $\le {\mathcal {O}}(\varepsilon \delta )h^{-n}$.

\item[(B)] For all $a,b$ with $0<a<b<1$, independent of $\varepsilon ,\delta $
and all segments $J\subset I\subset A\varepsilon +]-\frac{\varepsilon \delta}{4},\frac{\varepsilon
\delta}{4}[+i]-\varepsilon \delta ,\varepsilon \delta [$ of length
$|J|=a\delta \varepsilon $, $|I|=b\delta \varepsilon $, there exist
 $z\in J$, such that
\begin{equation}\label{det.38}
{\mathcal D}_P(z)-{\mathcal D}_{P_{\varepsilon ,\delta }}(z)\ge -{\mathcal{O}}_{a,b}(\varepsilon \delta )h^{-n}.
\end{equation}
\end{itemize}
\end{prop}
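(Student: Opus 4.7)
\begin{proofof} Proposition \ref{det2} (plan). The plan is to repeat verbatim the Jensen--Harnack argument used for Proposition \ref{det1}, but with $(\mathcal{D}_P-\mathcal{D}_{P_\varepsilon})$ replaced by $(\mathcal{D}_P-\mathcal{D}_{P_{\varepsilon,\delta}})$ and with the underlying domain $R_\delta$ replaced by $R_{\varepsilon,\delta}$. The key point that makes this work is that $P_{\varepsilon,\delta}=P_{\varepsilon,A,B,\delta}$ was designed precisely so that $\mathcal{D}_{P_{\varepsilon,\delta}}$ is harmonic on $R_{\varepsilon,\delta}$: by \eqref{or.8} and its analogue at $B\varepsilon$, $P_{\varepsilon,\delta}$ has no spectrum in $\{A\varepsilon,B\varepsilon\}+[-\varepsilon\delta/3,\varepsilon\delta/3]$, so the holomorphic function $f(z):=\det\bigl((P-z)(P_{\varepsilon,\delta}-z)^{-1}\bigr)$ is well-defined on $R_{\varepsilon,\delta}$, its zeros are exactly the resonances of $P$ there, and $\ln|f|=\mathcal{D}_P-\mathcal{D}_{P_{\varepsilon,\delta}}$.

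First I would collect the two basic bounds. The upper bound \eqref{det.33} gives
\[
|f(z)|\le \exp\bigl(\mathcal{O}(\varepsilon\delta)\,h^{-n}\bigr)\quad\text{for all } z\in R_{\varepsilon,\delta}.
\]
The lower bound at an anchor point comes from \eqref{det.37}: taking
\[
z_0:=A\varepsilon +2i\varepsilon\delta \in R_\delta \subset R_{\varepsilon,\delta},
\]
we have $\mathcal{D}_P(z_0)-\mathcal{D}_{P_{\varepsilon,\delta}}(z_0)\ge -\mathcal{O}(\varepsilon\delta)h^{-n}$, i.e. $|f(z_0)|\ge\exp\bigl(-\mathcal{O}(\varepsilon\delta)h^{-n}\bigr)$. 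These two bounds are the analogues of \eqref{det.21} and \eqref{det.22}, but now with $\varepsilon\delta h^{-n}$ in place of $\varepsilon^n h^{-n}$.

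Next I would apply Jensen's formula on a suitable disc around $z_0$ lying in $R_{\varepsilon,\delta}$ and containing the target rectangle $A\varepsilon+]-\varepsilon\delta/4,\varepsilon\delta/4[+i]-\varepsilon\delta,\varepsilon\delta[$; if necessary one works on several overlapping discs whose union covers the rectangle, keeping each disc contained in $R_{\varepsilon,\delta}$ (this is where one uses the full width of the vertical strip of $R_{\varepsilon,\delta}$ around $\Re z=A\varepsilon$ together with the horizontally unbounded piece $R_\delta$ above it). After the standard rescaling $z=\varepsilon\tilde z$, Jensen's formula then gives that the number of zeros of $f$ in the target rectangle (counted with multiplicity) is $\le\mathcal{O}(\varepsilon\delta)h^{-n}$, yielding part (A). As in the derivation of \eqref{det.23}, factor out the zeros: $f=e^{G}D_w$ with $D_w$ the finite product over the zeros (rescaled to the disc), and $G$ holomorphic on a slightly smaller disc. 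Apply Harnack's inequality to the harmonic function $\Re G=\ln|f|-\ln|D_w|$, exactly as in the derivation of \eqref{det.26}--\eqref{det.27}, to obtain for every pair $J\subset I$ of segments as in (B) a point $z\in J$ at which $|D_w(z;h)|\ge\exp(-\mathcal{O}_{a,b}(\varepsilon\delta)h^{-n})$, and hence
\[
\mathcal{D}_P(z)-\mathcal{D}_{P_{\varepsilon,\delta}}(z)=\ln|f(z)|\ge -\mathcal{O}_{a,b}(\varepsilon\delta)h^{-n}.
\]

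The main obstacle I anticipate is the geometric one of covering the rectangle by discs contained in $R_{\varepsilon,\delta}$ while keeping the anchor point inside $R_\delta$ (so that the lower bound \eqref{det.37} is available), and in particular navigating around the corners where the vertical strip of width $\varepsilon\delta/2$ meets the horizontal region $|\Im z|>\varepsilon\delta$. This is, however, a purely two-dimensional covering issue; once a finite chain of discs is set up (with the new anchor on each disc lying on the boundary circle of the previous one, and each disc satisfying both the upper bound on $R_{\varepsilon,\delta}$ and the lower bound at its anchor by propagation), the Jensen--Harnack machinery of Section~\ref{det} runs without modification. All remainders carry the natural replacement $\varepsilon^n h^{-n}\leadsto \varepsilon\delta h^{-n}$ dictated by the trace-class bound $\|P_\varepsilon-P_{\varepsilon,\delta}\|_{\mathrm{tr}}=\mathcal{O}((\varepsilon\delta)^2 h^{-n})$ used in \eqref{det.32}.
\end{proofof}
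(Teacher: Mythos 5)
Your proposal is correct and follows essentially the same route as the paper: start the Jensen--Harnack propagation at the anchor $z_0=A\varepsilon+2i\varepsilon\delta\in R_\delta$ where \eqref{det.37} supplies the lower bound, use \eqref{det.33} for the upper bound on $R_{\varepsilon,\delta}$, and rerun the covering argument of Proposition~\ref{det1} for $f(z)=\det\bigl((P-z)(P_{\varepsilon,\delta}-z)^{-1}\bigr)$ with the remainder $\varepsilon^n h^{-n}$ replaced throughout by $\varepsilon\delta h^{-n}$.
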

The same statements hold with $B$ instead of $A$.

\section{End of the proof}\label{ep}
\setcounter{equation}{0}

We study the number of resonances in  the rectangle
\begin{equation}\label{ep.2}
\Gamma=\big]A\varepsilon, B\varepsilon\big[+i\Big]-\frac{\varepsilon}{2C_{1}},\frac{\varepsilon}{2C_{1}}\Big[,
\end{equation}
where $A,B$ and $C_1$ are positive constants, (see \eqref{det.1} and Section \ref{or}). Set
\begin{equation}\label{ep.1}
\gamma=\partial\Gamma\,.
\end{equation}

Again it is convenient to scale : $z=\varepsilon \widetilde  z$ and use $\widetilde \gamma=\partial\widetilde\Gamma$ where
\begin{equation}\label{ep.3}
\widetilde{\Gamma }=\big]A, B\big[+i\Big]-\frac{1}{2C_{1}},\frac{1}{2C_{1}}\Big[.
\end{equation}

We shall apply Theorem 1.1 of \cite{Sj10}, (see also \cite[Theorem 12.1.1]{Sj19}). In the rescaled variable $\widetilde  z$,
we choose the Lipschitz weight on $\widetilde \gamma$:
\begin{equation}\label{ep.4}
{\widetilde d}(\widetilde  z)=\frac{1}{C}\big(\delta+\frac{1}{2}|\Im(\widetilde  z)|\big),\qquad \textrm{with}\,\, C>1\,\,\, \textrm{large enough},
\end{equation}
satisfying (cf.\ \cite[(1.1)--(1.5)]{Sj10}):
$$
|{\widetilde d}(\widetilde  z)-{\widetilde d}(\widetilde  w)|\leq \frac{1}{2}|\widetilde 
z-\widetilde  w|,\qquad \forall\widetilde  z,\widetilde  w\in\widetilde \gamma.
$$
Extend ${\widetilde d}$ to all $\mathbb C$ by setting
$$
{\widetilde d}(\widetilde  z)=\inf_{\widetilde  w\in\widetilde \gamma}\Big(\widetilde d(\widetilde  w)|+\frac{1}{2}|\widetilde  z-\widetilde  w|\Big),
\quad \forall \widetilde  z\in\mathbb C.
$$
This extended function is also Lipschitz of modulus at most
$\frac{1}{2}$, such that ${\widetilde d}(\widetilde  z)\geq \frac{1}{2}
\textrm{dist}(\widetilde  z,\widetilde  \gamma)$ and
$$
 |\widetilde  z-\widetilde  w|\leq {\widetilde d}(\widetilde  w)\Longrightarrow \frac{{\widetilde d}(\widetilde  w)}{2}
 \leq {\widetilde d}(\widetilde  z) \leq \frac{3{\widetilde d}(\widetilde  w)}{2}\,.
$$
Choose $\widetilde  z_j^0\in \widetilde  \gamma,\,\, j=0,1,\ldots,N-1$ 
distributed along $\widetilde  \gamma=\partial\widetilde\Gamma$
in the positively oriented sense such that
\begin{equation}\label{ep.5}
\frac{{\widetilde d}(\widetilde  z_j^0)}{k}\leq |\widetilde  z_{j+1}^0-\widetilde  z_j^0|\leq \frac{{\widetilde d}(\widetilde  z_j^0)}{2}\,,\quad 0\leq j\leq N-1
\end{equation}
with the convention that $j+1=0$ when $j=N-1$, and for some $k> 2$. Define
\begin{equation}\label{ep.6}
\varphi(z):=h^n\big({\mathcal D}_{P_{\varepsilon,\delta}}(z)+C\varepsilon \delta\big)
\end{equation}
with $C>0$ large enough so that 
\begin{equation}\label{ep.7}
{\mathcal D}_{P}(z) \le h^{-n}\varphi(z) \hbox{ in the
    set  (\ref{det.31}). }
\end{equation}

The $\widetilde  z_j^0$ in $\{A, B\}+i]-\delta ,\delta [$ are choosen
according to \textit{(B)} in Proposition \ref{det2} so that (\ref{det.38})
holds when $z=z_j^0:=\varepsilon \widetilde z_j^0$. Hence 
\begin{equation}\label{ep.8}
{\mathcal D}_{P}(z_j^0) \ge h^{-n}\big(\varphi(z_j^0)-\varepsilon_j\big) ,
\end{equation}
where $\varepsilon_j>0$ is independent of $j$, of the form
\begin{equation}\label{ep.9}
\varepsilon_j=C_0\varepsilon \delta\,
\end{equation}
with $C_0>0$ large enough. The $z_j^0\in \partial \Gamma $ with $|\Im z_j^0|>\varepsilon \delta $
are chosen according to \textit{(B)} in Proposition \ref{det1}, for which we
have (\ref{det.37}) and hence (\ref{ep.8}).

Consider the points $\widetilde  z_1^0,\widetilde  z_2^0,\ldots,\widetilde  z_K^0$ on $B+i]\delta,\frac{1}{2C_{1}}[$ (possibly after relabbeling)
ordered so that $\Im(\widetilde  z_1^0)<\Im(\widetilde  z_2^0)<\ldots<\Im(\widetilde  z_K^0)$. From \eqref{ep.5} we see that $\Im(\widetilde  z_j^0)$
growths geometrically with $j$ and it follows that $K\leq {\mathcal O}(1)|\ln(\delta)|$. 
The same holds for the corresponding  points on  $B+i]-\frac{1}{2C_{1}},-\delta[$, $A+i]\delta,\frac{1}{2C_{1}}[$ 
and $A+i]-\frac{1}{2C_{1}},-\delta[$.
The total number of  points  $\widetilde z_j^0$ on $\widetilde \gamma$
is $N={\mathcal O}(1)|\ln(\delta)|$.

Notice that $\phi(z)$ is harmonic in $\overset{N-1}{\underset{j=0}{\bigcup}}D(z_j^0,r_j)$. Apply Theorem 1.1
in \cite{Sj10} (or \cite[Theorem 12.1.1]{Sj19}) with $h$ there is replaced by $h^n$: We get in view of \eqref{ep.9},

\begin{align*}
\Big|\#\big({\mathcal D}_P^{-1}(0)\cap\Gamma\big)-\frac{1}{2\pi h^{n}}\int_{\Gamma}
\Delta\big(h^n{\mathcal D}_{P_{\varepsilon,\delta}}(z)\big)L(dz)\Big|
&\leq {\mathcal O}(1)h^{-n} N\varepsilon\delta\nonumber\\
{}&={\mathcal O}(1)h^{-n}\varepsilon\delta|\ln\delta|,
\end{align*}

where we first work in the $\widetilde  z$-variable but notice that 
$$
\int_{\frac{1}{\varepsilon}\Gamma} \Delta_{\widetilde  z}\big({\mathcal D}_{P_{\varepsilon,\delta}}
( \widetilde z)\big)\,L(d\widetilde  z)
=\int_{\Gamma}\Delta_z\big({\mathcal D}_{P_{\varepsilon,\delta}}(z)\big)\,L(dz).
$$
Here $\displaystyle\frac{1}{2\pi} \int_{\Gamma}\Delta\big(h^n{\mathcal D}_{P_{\varepsilon,\delta}}(z)\,L(dz)$ 
is equal to the number of zeros in $\Gamma$ of ${\mathcal D}_{P_{\varepsilon,\delta}}$ 
or equivalently the number of resonances in $\Gamma$ of $P_{\varepsilon,\delta}$.
By \eqref{or.12} 
this number is equal to $\displaystyle (2\pi h)^{-n}(\omega(\varepsilon B)-\omega(\varepsilon A))+
{\mathcal O}(\delta|\ln\delta|\varepsilon)h^{-n}$. 
This gives \textit{(B)} in Theorem \ref{int1} (with $a=\varepsilon A$ and $b=\varepsilon B$), 
since we already have the part \textit{(A)} which follows from Proposition \ref{det1}, \textit{(A)}.
\begin{appendix}

\section{Review of \cite{HeSj86}
and adaptation to the dilated situation}\label{dil}
\setcounter{equation}{0}

In this appendix we recall very briefly some basic microlocal tools 
developed in \cite{HeSj86} for the study of semiclassical resonances 
and adapt them to our situation. 
To some extent, we shall follow 
the review in \cite[Section 5]{HiMaSj17}.

\subsection{Order functions and symbols:}
\label{AP1}
\hskip-7pt
Let $R,\,r\in C^\infty({\mathbb{R}}^n;]0,+\infty [\,)$, satisfy for all
$\alpha \in {\mathbb{N}}^n$:
\begin{equation}\label{dil.1}
\partial _x^\alpha R(x)={\mathcal{O}}(1)R(x)^{1-|\alpha |},
\end{equation}
\begin{equation}\label{dil.2}
\partial _x^\alpha r(x)={\mathcal{O}}(1)r(x)R(x)^{-|\alpha |}.
\end{equation}

Define $\widetilde{r}(x,\xi )\in C^\infty ({\mathbb{R}}^{2n};]0,+\infty [)$ by
\begin{equation}\label{dil.3}
\widetilde{r}(x,\xi )=\left(r(x)^2+\xi ^2 \right)^{\frac12}.
\end{equation}
Then
\begin{equation}\label{dil.4}
\partial _x^\alpha \partial _\xi ^\beta \widetilde{r}(x,\xi )={\mathcal
  O}(1)\widetilde{r}(x,\xi )^{1-|\beta |}R(x)^{-|\alpha |}.
\end{equation}
We make the important assumption that
\begin{equation}\label{dil.5}
r(x)\ge 1,\quad  r(x)R(x)\ge 1.
\end{equation}
The quantities $R$, $r$, $\widetilde{r}$ are our basic scale functions. 
The functions $R$ and $\widetilde{r}$ give
the scale in $x$ and $\xi$, respectively.

\begin{dref}[Order functions]\label{AP.1}
 \begin{itemize}
  \item[{}]
  \item[\textbf{(a)}] We say that $m\in C^\infty ({\mathbb{R}}^{2n};]0,+\infty [)$ is an order
function if
\begin{equation}\label{dil.6}
\partial _x^\alpha \partial _\xi ^\beta m(x,\xi )={\mathcal{O}}(1)m(x,\xi
)R(x)^{-|\alpha |}\widetilde{r}(x,\xi )^{-|\beta |},
\end{equation}
for all $\alpha ,\beta \in {\mathbb{N}}^n$.
  \item[\textbf{(b)}] A function $m=m(x)\in C^\infty
({\mathbb{R}}^n;]0,+\infty [)$, independent of $\xi $, is an order function
if
$$\partial _x^\alpha m(x)={\mathcal{O}}(1)m(x)R(x)^{-|\alpha |},$$
for all $\alpha \in {\mathbb{N}}^n$.
 \end{itemize} 
\end{dref}

Any finite product $\Big(m_1(x,\xi )\times\ldots\times m_N(x,\xi)\Big)$ of order functions is an order function. 
We notice that $R$, $r$, $\widetilde{r}$ are order functions. 

\begin{dref}[Symbol classes]\label{AP.2} Let $m$ be an order function.
 \begin{itemize}
 \item[\textbf{(a)}] We say that $a\in C^\infty ({\mathbb{R}}^{2n})$ 
is a symbol of order $m$ and write $a\in S(\mathbb{R}^{2n};m)$, 
if for all $\alpha ,\beta \in {\mathbb{N}}^n$,
\begin{equation}\label{dil.7}
\partial _x^\alpha \partial _\xi ^\beta a(x,\xi )={\mathcal{O}}(1)m(x,\xi
)R(x)^{-|\alpha |}\widetilde{r}(x,\xi )^{-|\beta |}\,\, \hbox{ on } \ \mathbb{R}^{2n}.
\end{equation}
 \item[\textbf{(b)}] We write $a\in \dot{S}(\mathbb{R}^{2n},m)$
 when (\ref{dil.7}) holds for all $\alpha$,
$\beta \in {\mathbb{N}}^n$ with $|\alpha |+|\beta |>0$.
\item[\textbf{(c)}] If  $\mathcal{U}$ is open subset of ${\mathbb{R}}^{2n}$ 
we define ${S}(\mathcal{U};m)$ similarly, 
replacing ${\mathbb{R}}^{2n}$ by $\mathcal{U}$.
\end{itemize}
\end{dref}

Sometimes $m,a,R,r$ depend on parameters. We then require
(\ref{dil.1}), (\ref{dil.2}), (\ref{dil.6}), (\ref{dil.7}), to hold
uniformly with respect to the parameters, if nothing else is specified.   

For more details see \cite[Chapter 1, pp.7--15]{HeSj86}.

\subsection{IR-Lagrangian manifolds:}
\label{AP2}
Let $G\in \dot{S}(\mathbb{R}^{2n};R\,\widetilde{r})$ be real-valued. Then the manifold
\begin{equation}\label{dil.8}
\Lambda_G=\Big\{(x,\xi )\in {\mathbb{C}}^{2n};\,\, \Im(x,\xi)=H_G\big(\Re(x,\xi)\big)\Big\}
\end{equation}
is $\mathbf{I}$-Lagrangian, i.e. Lagrangian in ${\mathbb{C}}^{2n}$ for the real
symplectic form $-\Im \sigma $, where $\sigma =\underset{1\leq j\leq n}\sum d\xi _j\wedge
dx_j$ is the complex symplectic form.

\noindent
The one-form $-{\Im(\xi \cdot dx)}_{\big\vert\Lambda _G}$ is closed on $\mathbb{C}^{2n}$ 
and hence exact for topological reasons. 
The primitive $H$ is unique up to a constant and we can choose
\begin{equation}\label{dil.9}
H=-\Re \xi \cdot \Im x +G\big(\Re (x,\xi ))=G(\Re (x,\xi )\big)-
\Re \xi \cdot G'_\xi\big(\Re (x,\xi )\big).
\end{equation}

\par
If we also assume that $G$ is small in $\dot{S}(R\,\widetilde{r})$, 
then $\Lambda _G$ is $\mathbf{R}$-symplectic, 
i.e.\ a symplectic sub-manifold of ${\mathbb{C}}^{2n}$,
equipped with the symplectic form $\Re \sigma $. In other words,
${{\sigma }_\vert}_{\Lambda _G}$ is a (real) symplectic form on
$\Lambda _G$ and we have the volume element
$$
d\alpha =\frac{1}{n!}\big({\sigma^{\wedge n}}\big)_{\big\vert\Lambda _G}.
$$
For more details see \cite[Chapter 2, p.16]{HeSj86}.

\subsection{FBI-transforms and weighted Hilbert spaces:}
\label{AP3}
\hskip-7pt 
Clearly (\ref{dil.8}) gives a parametrization 
$$
{\mathbb{R}}^{2n}\ni \rho \longmapsto \rho +iH_G(\rho )
$$ 
of $\Lambda _G$ and we can then define
symbol spaces $S(\Lambda_G;m)$ of functions on $\Lambda _G$ 
by pulling back functions and weights to ${\mathbb{R}}^{2n}$. 
In particular, we define the scales $R$ and $\widetilde{r}$ by this pull back.

\par\smallskip
Let $\lambda =\lambda (\alpha )\in S(\Lambda _G;R^{-1}\,\widetilde{r})$
be positive, elliptic in the sense that $\lambda$ is
 non-vanishing and $\lambda^ {-1}\in S(\Lambda _G;\,R\,\widetilde{r}^{-1})$ and put
\begin{equation}\label{dil.10}
\phi (\alpha ,y)=(\alpha _x-y)\alpha _\xi +i\frac{\lambda(\alpha )}{2}(\alpha_x-y)^2
\end{equation}
with $\ \alpha =(\alpha _x,\alpha _\xi )\in \Lambda _G$ and $y\in {\mathbb{C}}^n.$

\par\smallskip 
The amplitude will be a ${\mathbb{C}}^{n+1}$-valued smooth function
${\mathbf t}(\alpha ,y;h)$ on $\Lambda_G \times {\mathbb{C}}^n_y$
which is affine linear in $y$. When discussing symbol properties of such
functions we restrict the attention to a region
\begin{equation}\label{dil.11}
|y-\alpha _x|<{\mathcal{O}}(1)R(\alpha _x),
\end{equation}
and with this convention, we require that ${\mathbf t}\in
h^{-\frac{3n}{4}}S(\Lambda_G;  R^{-\frac{n}{4}}\,\widetilde{r}^{\frac{n}{4}})$ and that
${\mathbf t}, \partial _{y_1}{\mathbf t},\ldots,\partial
_{y_n}{\mathbf t}$ are maximally linearly independent in the
sense that with ${\mathbf t}$ treated as a column vector,
\begin{equation}\label{dil.12}
\left|\det \begin{pmatrix}{\mathbf t} &\partial
  _{y_1}{\mathbf t}
  &\ldots &\partial _{y_n}{\mathbf t}\end{pmatrix}\right|\asymp 
  R^{-n}\left( h^{-\frac{3n}{4}}  R^{-\frac{n}{4}}\, \widetilde{r}^{\frac{n}{4}} \right)^{n+1}.
\end{equation}
{ (Vector valued symbols appear naturally after substitution of variables in a Gaussian resolution of the identity, see \cite[Section 4]{HeSj86}.)
}

Notice that the determinant is independent of $y$. If $\mathcal{B}_0$ is the canonical basis in 
$\mathbb{C}^{n+1}$, we can choose, for all $\alpha\in\Lambda_G$ and $y\in\mathbb{C}^n$ satisfying \eqref{dil.11}, 
\begin{equation}\label{dil.12.5}
 {\mathbf t}(\alpha,y;h)=t_0(\alpha;h)\begin{pmatrix}
                           1\\
                           \frac{\alpha_{x_1}-y_1}{R(\alpha_x)}\\
                           \vdots\\
                           \frac{\alpha_{x_n}-y_n}{R(\alpha_x)}\\
                          \end{pmatrix}_{\mathcal{B}_0},
\end{equation}
where $t_0(\bullet;h)\in h^{-\frac{3n}{4}}S(\Lambda_G;  R^{-\frac{n}{4}}\,\widetilde{r}^{\frac{n}{4}})$.
\par\smallskip 
Let $\chi \in C_0^\infty \big(B(0,\frac{1}{C})\big)$ be equal to one in
$B(0,\frac{1}{2C})$, where $C>0$ is large enough. We define the
FBI-transform 
$$
T:{\mathcal D}'({\mathbb{R}}^n)\longrightarrow C^\infty (\Lambda _G;{\mathbb C}^{n+1})
$$
by
\begin{equation}\label{dil.13}
Tu(\alpha ;h)=\int e^{\frac{i}{h}\phi (\alpha
  ,y)}{\mathbf t}(\alpha ,y;h) \chi _\alpha (y)u(y)dy,
\end{equation}
where $\chi _\alpha (y)=\chi \left(\frac{y_1-{\Re \alpha} _{x_1}}{R(\Re \alpha_x)},\ldots,
\frac{y_n-{\Re \alpha} _{x_n}}{R(\Re \alpha_x)}\right)$. 
Here the domain of integration is equal to ${\mathbb{R}}^n$ 
and the integral is defined as the bilinear scalar product of 
$u\in {\mathcal D}'({\mathbb{R}}^n)$ and a test function in
$C_0^\infty ({\mathbb{R}}^n)$.
\par\smallskip 
We assume from now on that $G$ belongs to $S(\mathbb{R}^{2n}; R\,\widetilde{r})$. 
We also assume:
\begin{equation}\label{dil.14}
\left\{\begin{aligned}
&\hbox {There exist } g_0=g_0(x)\in S(\mathbb{R}^{2n};R\, r), \hbox{ such that }\\
&G (x,\xi )-g_0(x) \hbox{ has its support in a region where }\\
&|\xi |\le {\mathcal{O}}(r(x)) \hbox{ and } G(x,\xi )-g_0(x) \hbox{ is sufficiently }\\
&  \hbox{ small in }S(\mathbb{R}^{2n};R\, r).
\end{aligned}\right.
\end{equation}
Notice that the order function $\widetilde{r}$ is controlled by $r$ in the 
region $|\xi |\le {\mathcal{O}}(r(x))$.

\par\smallskip\noindent  
Let $H$ be given in (\ref{dil.9}). Then $H\in S(\Lambda_{G}; R\, \widetilde{r})$. 
Using $T$ we shall define the function spaces $H(\Lambda _G;m)$,
essentially by requiring that
$$
Tu\in L^2(\Lambda _G;m^2e^{-\frac{2}{h}H}d\alpha ).
$$
Here, $m$ is an order function.  

\par\smallskip 
Let $G$ satisfy \eqref{dil.14} and be
sufficiently small in $S(\mathbb{R}^{2n}; R\,\widetilde{r})$, 
or more generally, assume \eqref{dil.14}. 
Define $H$ as in (\ref{dil.9}), let $m$ be an order function on $\Lambda _G$ 
and let $T$ be an associated FBI-transform as in (\ref{dil.13}). 
In \cite[Proposition 4.4]{HeSj86} it is shown that $T$ is injective on
  $C_0^\infty ({\mathbb{R}}^n)$ and also on more general Sobolev spaces with
  exponential weights,  by the construction of
an approximate left inverse of $T$ which works with exponentially
small errors. See \cite[Chapter 4, pp. 20--42]{HeSj86} for accurate results.

\begin{dref}[Sobolev spaces associated to the IR-manifolds]\label{AP.3}
The set $H(\Lambda _G;m)$ is the completion of $C_0^\infty ({\mathbb{R}}^n)$ 
for the norm
\begin{equation}\label{dil.15}
\big\| u\big\|_{H(\Lambda _G;m)}=
\big\| Tu\big\|_{L^2(\Lambda _G;m^2e^{-\frac{2}{h}H}d\alpha )}.
\end{equation}
\end{dref}

\noindent
The following facts were established in \cite[Chapter 5, pp. 43--54]{HeSj86}:
\begin{itemize}
\item $H(\Lambda _G;m)$ is a Hilbert space.
\item If we modify the choice of $\lambda $ and ${\mathbf t}$
  in the definition of $T$, we get the same space $H(\Lambda_G;m)$
  and the new norm is uniformly equivalent to the earlier one, when
  $h$ tends to $0$.
\item If $G_1\le G_2$ and $m_1\ge m_2$, then $H(\Lambda
    _{G_1};m_1)\subset H(\Lambda _{G_2};m_2)$,
and the inclusion map is uniformly bounded.

\item When $G=g(x)$ is independent of $\xi $ and $m=m_0(x)$, we get
$$
H(\Lambda _G;m)=L^2({\mathbb{R}}^n;m_0^2e^{-\frac{2}{h}g(x)}\,dx)
$$
with uniform equivalence of norms. 
More generally, when $G=g(x)$ and 
$m(x,\xi)=m_0(x)\left(\frac{\widetilde{r}(x,\xi )}{r(x)}\right)^{N_0}$
with $N_0\in {\mathbb{R}}$, then $H(\Lambda _G;m)$ is the naturally 
defined exponentially weighted Sobolev space.
\end{itemize}


\subsection{Schr\"odinger operator on $H(\Lambda_G;m)$:}
\label{AP4}
\hskip-10pt
We now consider a Schr\"odinger operator
\begin{equation}\label{dil.155}
P=-h^2\Delta +V(x),\ x\in {\mathbb{R}}^n,
\end{equation}
where $V$ is real-valued and 
\begin{equation}\label{dil.16}
V\in S(\mathbb{R}^n;r^2),
\end{equation}
so that the symbol $p(x,\xi )=\xi ^2+V(x)$ belongs to $S(\mathbb{R}^{2n};\widetilde{r}^2)$.

A basic element in the theory is that if $V$ extends holomorphically to a truncated sector ${\Gamma}_C$
as in \eqref{int.1},
and the extension satisfies $|V(x)|\le {\mathcal{O}}(1)r(\Re x)^2$, 
and if $G$ satisfyies \eqref{dil.14} with $G-g_0=0$ near the analytic singular support of $V$,
then 
$$
P:\, H(\Lambda _G;\widetilde{r}^2)\longrightarrow H(\Lambda _G)
$$ 
can be viewed as an $h$-pseudo-differential operator with leading symbol
${{p}_\vert}_{\Lambda _G}$. For precise results, see
\cite[Chapter 6, Th\'eor\`emes 6.8, 6.8(corrig\'e), and the paragraph in the pages 77--78]{HeSj86}.

\subsection{Dilations:}\label{AP5}\hskip-7pt
Let $\varepsilon_0$ be a positive small constant. For $\varepsilon\in]0,\varepsilon_0]$, 
we introduce the basic scale functions:
\begin{equation}\label{prep.37}
\begin{aligned}
& R_\varepsilon (x)= (\varepsilon +x^2)^{\frac12},  \quad 
 r_\varepsilon (x)=\frac{(\varepsilon+x^2)^{\frac12}}{(1+x^2)^{\frac12}}\\
&  \hbox{ and } \, \widetilde{r}_\varepsilon (x,\xi )=(r_\varepsilon(x)^2+\xi ^2)^{\frac12}.
\end{aligned}
\end{equation}
As above, with these scales, we define the notion of order functions and symbols.
To emphasize the dependence on the parameter $\varepsilon$, we add it in the notations as follows:

\begin{dref}[$\varepsilon$-Order functions]\label{APepsilon.1}
 \begin{itemize}
  \item[{}]
  \item[\textbf{(a)}] We say that $m\in C^\infty ({\mathbb{R}}^{2n};]0,+\infty [)$ is an $\varepsilon$-order
function if
\begin{equation}\label{dil.6.epsilon}
\partial _x^\alpha \partial _\xi ^\beta m(x,\xi )={\mathcal{O}}(1)m(x,\xi
)R_\varepsilon(x)^{-|\alpha |}\widetilde{r}_\varepsilon(x,\xi )^{-|\beta |},
\end{equation}
for all $\alpha ,\beta \in {\mathbb{N}}^n$.
  \item[\textbf{(b)}] A function $m=m(x)\in C^\infty
({\mathbb{R}}^n;]0,+\infty [)$, independent of $\xi $, is an $\varepsilon$-order function
if
$$\partial _x^\alpha m(x)={\mathcal{O}}(1)m(x)R_\varepsilon(x)^{-|\alpha |},$$
for all $\alpha \in {\mathbb{N}}^n$.
 \end{itemize} 
\end{dref}

We require the estimates to be uniform in $\varepsilon$. 
In the special case when $\varepsilon$ is fixed $=1$, we get an order function in
the sense of \cite{HeSj86}. 

\begin{dref}[$\varepsilon$-Symbol classes]\label{APepsilon.2} Let $m$ be an $\varepsilon$-order function.
 \begin{itemize}
 \item[\textbf{(a)}] We say that $a\in C^\infty ({\mathbb{R}}^{2n})$ 
is an $\varepsilon$-symbol of order $m$ and write $a\in S_{\varepsilon}(\mathbb{R}^{2n};m)$, 
if for all $\alpha ,\beta \in {\mathbb{N}}^n$,
\begin{equation}\label{dil.7epsilon}
\hskip-5pt \partial _x^\alpha \partial _\xi ^\beta a(x,\xi )={\mathcal{O}}(1)m(x,\xi
)R_{\varepsilon}(x)^{-|\alpha |}\widetilde{r}_{\varepsilon}(x,\xi )^{-|\beta |}\, \hbox{ on } \mathbb{R}^{2n}.
\end{equation}
 \item[\textbf{(b)}] We write $a\in \dot{S}_{\varepsilon}(\mathbb{R}^{2n},m)$
 when (\ref{dil.7epsilon}) holds for all $\alpha$,
$\beta \in {\mathbb{N}}^n$ with $|\alpha |+|\beta |>0$.
\item[\textbf{(c)}] If  $\mathcal{U}$ is open subset of ${\mathbb{R}}^{2n}$ 
we define ${S}_{\varepsilon}(\mathcal{U};m)$ similarly, 
replacing ${\mathbb{R}}^{2n}$ by $\mathcal{U}$.
\end{itemize}
\end{dref}

Now return to our Schr\"odinger operator $P$ as in Theorem \ref{int1}. 
Let $G=G^\varepsilon$ or $G=G^{\frac{C\varepsilon}{\widetilde{C}}}$
be the local escape functions in Propositions \ref{esc1},
\ref{bp1}, \ref{ltg1}, \ref{ltg2}. At the end  of Section \ref{ltg} we
recalled how to extend $G$ to a global escape function $\widetilde{G}$
satisfying (\ref{ltg.21.5}), (\ref{ltg.22}) away from a fixed neighborhood of $(0,0)$. 
We now drop the tilde and denote by $G$ (or $G^\varepsilon $,
$G^{\frac{C\varepsilon}{\widetilde{C}}}$) this global escape function for
which the above cited propositions hold near $(0,0)$, in addition to
(\ref{ltg.21.5}), (\ref{ltg.22}). We also have $G\in
{S}_\varepsilon (\mathbb{R}^{2n};R_\varepsilon\,\widetilde{r}_\varepsilon )$ 
and we can arrange so that 
$|\xi |\le {\mathcal{O}}(1)r_\varepsilon (x)$ for $(x,\xi)\in \mathrm{supp\,}G$. 

\par\smallskip 
We wish to apply \cite{HeSj86} to the operators $P$, $P_\varepsilon $ 
and $P_\varepsilon^{\mathrm{ext}}$ given by \eqref{prep.1}, \eqref{prep.2} 
and \eqref{pext.4.5} respectively with the scale functions 
$R=R_\varepsilon $, $r=r_\varepsilon $, $\widetilde{r}_\varepsilon $ given by (\ref{prep.37}). 
Notice that these functions satisfy (\ref{dil.1}), (\ref{dil.2}), (\ref{dil.4}). However
(\ref{dil.5}) does not hold near $x=0$ and we only have
\begin{equation}\label{dil.17}
r_\varepsilon (x)\ge \sqrt{\varepsilon },\quad  R_\varepsilon (x) r_\varepsilon (x)\ge\varepsilon.
\end{equation}

\par\smallskip 
In order to remedy for the failure of (\ref{dil.5}), we make the change of variables
$$
x=\mu\,\widetilde{x}\,\,\,\hbox{ with }\,\,\mu =\sqrt{\varepsilon }
$$
and introduce a new semi-classical parameter, by requiring that
\begin{equation}\label{dil.21}
hD_x=\mu \widetilde{h}D_{\widetilde{x}},\quad \hbox{ i.e., }\,\widetilde{h}=\frac{h}{\mu ^2}.
\end{equation}
Assuming from now on that
\begin{equation}\label{dil.22}
\varepsilon \ge h^{\frac12-\alpha _0},\hbox{ for some fixed }\alpha _0>0,
\end{equation}
we see that $\widetilde{h}$ tends to $0$ when $h$ goes to $0$.

\par\smallskip 
The corresponding dilation of a semi-classical operator
$Q=q(x,hD_x)$ is $\widetilde{Q}=q(\mu(\widetilde{x},\widetilde{h}D_{\widetilde{x}}))$, 
whose semi-classical symbol (with respect to $\widetilde{h}$) is given by
\begin{equation}\label{dil.23}
\widetilde{q}(\widetilde{x},\widetilde{\xi })=q(\mu \widetilde{x},\mu\widetilde{\xi }).
\end{equation}

\par\smallskip 
The balls $B(x;R_\varepsilon (x))$ and $B(\xi;\widetilde{r}_\varepsilon (x,\xi ))$ 
become $B\left(\widetilde{x},\frac{1}{\mu} R_\varepsilon(\mu \widetilde{x})\right)$ 
and $B\left(\widetilde{\xi},\frac{1}{\mu}\widetilde{r}_\varepsilon (\mu \widetilde{x},\mu \widetilde{\xi })\right)$ 
respectively in the $\widetilde{x}$, $\widetilde{\xi }$ coordinates.
It is then natural to put:
\begin{equation}\label{dil.24}
R_{\varepsilon,\mu}(\widetilde{x})=\frac{1}{\mu }R_\varepsilon (\mu \widetilde{x}),\quad
r_{\varepsilon,\mu}(\widetilde{x})=\frac{1}{\mu }r_\varepsilon (\mu \widetilde{x}),
\end{equation}
\begin{equation}\label{dil.25}
\widetilde{r}_{\varepsilon,\mu} (\widetilde{x},\widetilde{\xi })
=\frac{1}{\mu}\widetilde{r}_\varepsilon (\mu \widetilde{x},\mu \widetilde{\xi})
=\Big(r_{\varepsilon,\mu} (\widetilde{x})^2+\widetilde{\xi }^2\Big)^{\frac{1}{2}}.
\end{equation}
Recalling that $\mu =\sqrt{\varepsilon }$, we get more explicitly,
\begin{equation}\label{dil.26}
R_{\varepsilon,\mu} (\widetilde{x})=(1+\widetilde{x}^2)^{\frac12},\quad 
r_{\varepsilon,\mu}(\widetilde{x})=
\frac{(1+\widetilde{x}^2)^{\frac12}}{(1+\varepsilon \widetilde{x}^2)^{\frac12}}.
\end{equation}
The rescaled functions satisfy (\ref{dil.1}), (\ref{dil.2}), (\eqref{dil.4}) and (\ref{dil.5}):
\begin{equation}\label{dil.27}
\partial _{\widetilde{x}}^\alpha R_{\varepsilon,\mu} 
={\mathcal{O}}(1) R_{\varepsilon,\mu}^{1-|\alpha |}\quad
\partial _{\widetilde{x}}^\alpha r_{\varepsilon,\mu}  
={\mathcal{O}}(1) r_{\varepsilon,\mu}  R_{\varepsilon,\mu}^{-|\alpha |}.
\end{equation}
\begin{equation}\label{dil.28}
r_{\varepsilon,\mu}(\widetilde x)\ge 1\quad \hbox{ and }\quad r_{\varepsilon,\mu}(\widetilde x)
R_{\varepsilon,\mu}(\widetilde x)\ge 1.
\end{equation}

\par\smallskip
Let $S_{\varepsilon,\mu} (\bullet;m_{\varepsilon,\mu} )$ denote the symbol space $S(\bullet;m)$,
  defined as above, but now with respect to the scales $r_{\varepsilon,\mu}$,
  $R_{\varepsilon,\mu}$, $\widetilde{r}_{\varepsilon,\mu} $ 
  (from now on $\mu $-scales for short), with $m_{\varepsilon,\mu} $ 
  being an order function: $m_{\varepsilon,\mu}\in S_{\varepsilon,\mu} (\bullet;m_{\varepsilon,\mu} )$. 
  
  \par\noindent
  If $m$ is an order function for the scales $r_\varepsilon $, $R_\varepsilon $,
  $\widetilde{r}_\varepsilon $ (from now on $\varepsilon $-scales) and
  $a\in S_\varepsilon (\bullet;m_\varepsilon)$ and if we define
  \begin{equation}\label{dil.28.5}
  a_\mu (\widetilde{x},\widetilde{\xi })=a(\mu (\widetilde{x},
  \widetilde{\xi })),
  \quad m_{\varepsilon,\mu} (\widetilde{x},\widetilde{\xi })=m_\varepsilon(\mu
  (\widetilde{x},\widetilde{\xi} )),
 \end{equation}
then $m_\mu $ is an order function for the $\mu$-scales and $a_{\varepsilon,\mu} \in S_{\varepsilon,\mu} (m_\mu )$.

\par\smallskip
 Consider an FBI-transform $T$ as in (\ref{dil.10})--(\ref{dil.13})
  and let $G$, $g_0$ be as in the corresponding discussion so that
  (\ref{dil.14}) holds with respect to the $\varepsilon$-scales. In
  (\ref{dil.13}) we make the change of variables,
  $$
  \alpha =\mu\, \widetilde{\alpha },\quad 
  y=\mu\, \widetilde{y}
  $$ 
  and define $\widetilde{u}(\widetilde{y})$ by 
  \begin{equation}\label{dil.29}
    u(y)=\mu^{\frac{n}{2}}\widetilde{u}(\widetilde{y}), 
 \end{equation} 
 so that the map 
 \begin{equation*}
   \begin{matrix}
  \left\{
  \begin{aligned}
   L^2(\mathbb{C}^{n};dy) & \longrightarrow L^2(\mathbb{C}^{n};d\widetilde{y})\\
     u(\bullet)              & \longmapsto \widetilde{u}(\widetilde\bullet)
  \end{aligned}\right.
 \end{matrix}
 \qquad
 \hbox{is unitary}.
 \end{equation*}
We have
  $\displaystyle\frac{1}{h}\,\phi (\alpha ,y)=\displaystyle\frac{1}{\widetilde{h}}\phi_\mu(\widetilde{\alpha},\widetilde{y})$, 
  where
  $$ 
  \phi _\mu (\widetilde{\alpha },\widetilde{y})=\frac{1}{\mu ^2}\,
  \phi (\mu\widetilde{\alpha },\mu \widetilde{y}) 
  =(\widetilde{\alpha}_x-\widetilde{y})\cdot \widetilde{\alpha }_\xi 
  +i\frac{\widetilde{\lambda}(\widetilde{\alpha })}{2}\,(\widetilde{\alpha }_x-\widetilde{y})^2, 
  $$
  and
  $\widetilde{\lambda }(\widetilde{\alpha })=\lambda(\frac{\widetilde{\alpha }}{\mu} )\in 
  S_{\varepsilon,\mu}({\Lambda}_{G_\mu}; R_{\varepsilon,\mu}\,\widetilde{r}_{\varepsilon,\mu})$ is
  elliptic and positive. Here ${\Lambda}_{G_\mu}$ is the $\mathbf{I}$-Lagrangian manifold associated to
  the escape function $G_\mu$ (in the $\mu$-scales) given by \eqref{dil.35.5} below. We have 
\begin{equation}\label{dil.30}
    Tu(\alpha ;h)=T_\mu \widetilde{u}(\widetilde{\alpha};\widetilde{h}), 
\end{equation}
where 
\begin{equation}\label{dil.31} 
T_\mu\widetilde{u}(\widetilde{\alpha } ; \widetilde{ h})
=\int e^{\frac{i}{\widetilde{ h}}\widetilde{\phi} (\alpha ,y)}{\bf
      t}_\mu (\widetilde{\alpha } , \widetilde{y}; \widetilde{h}) \chi
    _{\widetilde{\alpha}} (\widetilde{y})
    \widetilde{u}(\widetilde{y})d
    \widetilde{y}, 
    \end{equation} 
    \begin{equation}\label{dil.32}
    \widetilde{\chi }_{\widetilde{\alpha }}(\widetilde{y})=\chi_\alpha (y), 
    \end{equation} and 
    \begin{equation}\label{dil.33}
    {\mathbf t}_\mu (\widetilde{\alpha },\widetilde{y};\widetilde{h})=\mu
    ^{\frac{3n}{2}}{\mathbf t}(\alpha ,y;h)=\mu^{\frac{3n}{2}}{\mathbf t}(\mu
    \widetilde{\alpha } ,\mu
    \widetilde{y};\mu^2\widetilde{h}).  
    \end{equation}
  The function $\widetilde{\chi }$ has the same cut-off properties in the
  $\mu $-scales as $\chi _\alpha $ in the $\varepsilon
  $-scales. Moreover, ${\mathbf t}_\mu $ is affine linear in
  $\widetilde{y}$ and 
  \begin{equation}\label{dil.34} 
  {\mathbf t}_\mu \in\widetilde{h}^{-\frac{3n}{4}}S_{\varepsilon,\mu} 
  ({\Lambda}_{G_\mu};R_{\varepsilon,\mu}^{-\frac{n}{4}}\,\widetilde{r}_{\varepsilon,\mu}^{\frac{n}{4}}).  
  \end{equation} 
  From (\ref{dil.12}) we get by straight forward
  calculation, 
  \begin{equation}\label{dil.35}
    \left|\det \begin{pmatrix}
    {\mathbf t}_\mu &\partial_{\widetilde{y}_1}{\mathbf t}_\mu &\ldots &\partial
        _{\widetilde{y}_n}{\mathbf t}_\mu 
        \end{pmatrix}\right|
        \asymp R_{\varepsilon,\mu}^{-n}\left(\widetilde{h}^{-\frac{3n}{4}}
        R_{\varepsilon,\mu}^{-\frac{n}{4}}\widetilde{r}_{\varepsilon,\mu}^{\frac{n}{4}}
    \right)^{n+1}, 
    \end{equation} 
    which is analogous to (\ref{dil.12}).

\par\medskip 
  Let $G\in S(\mathbb{R}^{2n}; R_\varepsilon\,\widetilde{r}_\varepsilon)$ be
  real-valued. Define $G_{\mu}$ by
  \begin{equation}\label{dil.35.5}
 \frac{1}{h} G(\alpha ) = \frac{1}{\widetilde{h}} G_{\mu}(\widetilde{\alpha }),
  \hbox{ i.e. } G_{\mu} (\widetilde{\alpha })=\frac{1}{\mu^2}G(\frac{\widetilde{\alpha}}{\mu}).    
  \end{equation}
  Then
  $$ 
  G_\mu \in \mu ^{-2}{S}_{\varepsilon,\mu} \left(\mathbb{R}^{2n};
  (\mu R_{\varepsilon,\mu})\,(\mu \widetilde{r}_{\varepsilon,\mu}) \right)
    ={S}_{\varepsilon,\mu} (\mathbb{R}^{2n}; R_{\varepsilon,\mu}\,\widetilde{r}_{\varepsilon,\mu} ).  
  $$ 
  If (\ref{dil.14}) holds for the $\varepsilon $-scales,
  then it also holds for $G_\mu $, $g_{0,\mu }$ for the $\mu $-scales
  with
  $g_{0,\mu }(\widetilde{\alpha }_x)=\mu^{-2}g_0(\mu \widetilde{\alpha }_x)$.  
  
  \par\smallskip 
  With $H$ as in (\ref{dil.9}) we get by straight forward calculation,
  \begin{equation}\label{dil.35.55}
  H(\alpha )=\mu^2H_\mu (\widetilde{\alpha }), 
  \end{equation}
  where
  $$ 
  H_\mu (\widetilde{\alpha })=G_\mu (\Re \widetilde{\alpha })-\Re
  \widetilde{\alpha }_\xi \cdot \partial _{\widetilde{\xi }}G_\mu (\Re
  \widetilde{\alpha }).  
  $$
Since the weights $R_{\varepsilon,\mu}$, $r_{\varepsilon,\mu}$, $\widetilde{r}_{\varepsilon,\mu}$ 
   satisfy (\ref{dil.1}), (\ref{dil.2}), (\ref{dil.4}) and (\ref{dil.5}) we can define the Sobolev spaces 
   ${\Lambda}_{G_\mu}$,$H(\Lambda_{G_\mu};m_{\varepsilon,\mu})$,
   associated to the IR-manifolds as in Definition \ref{AP.3}. 
   In view of (\ref{dil.30}) this allows us to define the spaces
  $H(\Lambda _G;m)$ for the scales $r_\varepsilon $, $R_\varepsilon $,
  $\widetilde{r}_\varepsilon $: 
  
  \par 
  We say that $u\in H(\Lambda _{G};m_\varepsilon)$ if $\widetilde{u}\in H(\Lambda _{G_\mu };m_{\varepsilon,\mu} )$ with
  $m\longleftrightarrow m_{\varepsilon,\mu}$, $u\longleftrightarrow \widetilde{u} $
  and $G\longleftrightarrow G_\mu $ related as above, see \eqref{dil.28.5}, \eqref{dil.29} and \eqref{dil.35.5}.
 
\par\noindent
For $u\in H(\Lambda _G;m_\varepsilon)$ we choose the norm
$$
\big\| u\big\|_{H(\Lambda _G;m_\varepsilon)}=
\big\| Tu\big\|_{L^2(\Lambda _G;m_\varepsilon^2 e^{-\frac{2}{h}H}\,d\alpha )}.
$$
Defining similarly
$$
\big\| \widetilde{u}\big\|_{H(\Lambda _{G_\mu };m_{\varepsilon,\mu})}=
\big\| T_\mu \widetilde{u}\big\|_{L^2(\Lambda _{G_\mu };m_{\varepsilon,\mu}^2 
e^{-\frac{2}{\tilde h}H_\mu}\, d\widetilde{\alpha })},
$$
we find,
\begin{equation}\label{dil.36}
\big\| u\big\|_{H(\Lambda _G;m_\varepsilon)}=
\mu^n\big\|\widetilde{u}\big\|_{H(\Lambda_{G_\mu};m_{\varepsilon,\mu})},
\end{equation}
since $d\alpha =\mu ^{2n}d\widetilde{\alpha }$ 
and the relationships between the different involved quantities (See 
\eqref{dil.28.5}, \eqref{dil.29}, \eqref{dil.35.5} and \eqref{dil.35.55}). 

\par\smallskip 
In conclusion, the changes of variables above allow us to replace the
scales $r_\varepsilon $, $R_\varepsilon  $, $\widetilde{r}_\varepsilon$ 
that do not satisfy (\ref{dil.5}) by the
scales $r_{\varepsilon,\mu}$, 
$R_{\varepsilon,\mu}$, $\widetilde{r}_{\varepsilon,\mu}$ that do so, 
and we can then apply the theory of \cite{HeSj86}.

\section{Volume functions}\label{app}
\setcounter{equation}{0}

Recall that $\omega (E)$ is defined by \eqref{int.13} or
\eqref{int.14} when $E\leq 0$.  The saddle point being $x=0$, we
choose coordinates as in the beginning of Section \ref{esc}.  We
extend $\omega (E)$ to small positive values of $E$ by replacing
${\mathscr{U}}_0$ in \eqref{int.13} or \eqref{int.14} with
$\widetilde {\mathscr{U}}_0:=\{x\in\mathrm{neigh}(\overline{\mathscr{U}}_{0},{\mathbb{R}}^n);\, x_n\le 0\ 
\hbox{when}\ x\in \mathrm{neigh\,}(0,{\mathbb{R}}^n)\}$.  
Then \eqref{int.15} remains valid again with ${\mathscr{U}}_0$ 
replaced by $\widetilde {\mathscr{U}}_0$ and this
gives a nice $C^1$-extension of $\omega (E)$ to
$E\in \mathrm{neigh}(0,{\mathbb{R}})$. 
The volume $\omega_{\varepsilon}(E)$
in (\ref{pint.5}), can be written
\begin{equation}\label{app.1}
 \omega_{\varepsilon}(E)=\int_{\widetilde {\mathscr{U}}_0}
 \left(\int_{p_{\varepsilon}(x,\xi)\leq E}\,d\xi\right)\, dx.
\end{equation}
Recall that $p_\varepsilon$ is given by \eqref{bp.8}, \eqref{bp.3}. 
Assume for simplicity that 
$\chi (x,0)\geq \chi (x,\xi)$ for all $\xi\in{\mathbb{R}}^n$. Then, since 
$$
p_\varepsilon(x,\xi)=\xi^2+V(x)+\chi_\varepsilon (x,\xi),
$$
where $\chi_\varepsilon$ is given by \eqref{bp.3}, we have
\begin{equation}\label{app.2}
 \xi^2+V(x)\leq p_\varepsilon(x,\xi)\leq \xi^2+V(x)+\chi_\varepsilon (x,0)
\end{equation}
and it follows that 
\begin{equation}\label{app.3}
C_n\int_{\widetilde {\mathscr{U}}_0}\big(E-(V(x)+\chi_\varepsilon (x,0))\big)_{+}^{\frac{n}{2}}\, dx
\leq \omega_\varepsilon(E) 
\leq  C_n\int_{\widetilde {\mathscr{U}}_0}\big(E-V(x)\big)_{+}^{\frac{n}{2}}\, dx\,,
\end{equation}
where $\displaystyle C_n=\mathrm{vol}\big(B_{{\mathbb{R}}^n}(0,1)\big)
=\frac{\pi^{\frac{n}{2}}}{\Gamma(\frac{n}{2}+1)}$, and the last member is $\omega(E)$
by \eqref{int.14}.
Using that $[0,1]\ni t\longmapsto \big(E-(V(x)+t\chi_\varepsilon (x,0))\big)_{+}^{\frac{n}{2}}$ 
is a convex function, we get
$$
\big(E-V(x)\big)_{+}^{\frac{n}{2}}-\big(E-(V(x)+\chi_\varepsilon
(x,0))\big)_{+}^{\frac{n}{2}}\leq  
\frac{n}{2}\big(E-V(x)\big)_{+}^{\frac{n}{2}-1}\chi_\varepsilon (x,0)
$$
so by \eqref{app.3}:
\begin{equation}\label{app.4}
\begin{aligned}
 0\leq \omega(E)-\omega_\varepsilon(E) & \leq 
 \frac{\pi^{\frac{n}{2}}}{\Gamma(\frac{n}{2})}  
 \int_{\widetilde {\mathscr{U}}_0} \big(E-V(x)\big)_{+}^{\frac{n}{2}-1}\chi_\varepsilon (x,0)\, dx\\
{} & \leq  \mathcal{O}(1)\int_{\widetilde {\mathscr{U}}_0} \chi_\varepsilon (x,0)\, dx\\                                                   
 {} & \leq  \mathcal{O}(\varepsilon) \varepsilon^{\frac{n}{2}}\\
{} & \leq  \mathcal{O}(1)\varepsilon^{2}\,. 
\end{aligned}
\end{equation}

\end{appendix}

%

\end{document}